\documentclass[12pt]{amsart}
\usepackage{amssymb, amsthm, amsmath, amsfonts,amscd}
\usepackage{graphics}
\usepackage{hyperref}
\usepackage[all]{xy}
\usepackage{enumerate}
\usepackage[mathscr]{eucal}
\usepackage{bbm}
\usepackage{tikz}

\providecommand{\U}[1]{\protect\rule{.1in}{.1in}}
\setlength{\topmargin}{-.25in}
\setlength{\textheight}{9.25in}
\setlength{\oddsidemargin}{0.0in}
\setlength{\evensidemargin}{0.0in}
\setlength{\textwidth}{6.5in}

\def\theenumi{\arabic{enumi}}

\def\theenumii{\alph{enumii}}
\def\p@enumii{\theenumi.}

\def\theenumiii{\arabic{enumiii}}
\def\p@enumiii{(\theenumi)(\theenumii)}

\def\p@enumiv{\p@enumiii.\theenumiii}
\parindent=0pt

\theoremstyle{plain}
\newtheorem{theorem}{Theorem}[section]
\newtheorem{lemma}[theorem]{Lemma}
\newtheorem{proposition}[theorem]{Proposition}

\newtheorem{corollary}[theorem]{Corollary}
\newtheorem{conjecture}[theorem]{Conjecture}
\numberwithin{equation}{section}

\theoremstyle{definition}
\newtheorem{question}{Question}
\newtheorem{definition}[theorem]{Definition}

\newtheorem{remark}[theorem]{Remark}

\newtheorem{thmab}{Theorem}
\newtheorem{corab}[thmab]{Corollary}

\renewenvironment{proof}[1][\proofname]{{\bfseries #1\\}}{\qed}

\setcounter{tocdepth}{1}

\DeclareMathOperator{\Conf}{Conf}
\DeclareMathOperator{\UConf}{UConf}

\DeclareMathOperator{\rank}{rank}

\newcommand{\K}{{\mathcal{K}}}
\newcommand{\R}{\mathbb{R}}

\newcommand{\Z}{{\mathbb{Z}}}
\newcommand{\Q}{\mathbb{Q}}
\newcommand{\dt}{\bullet}
\newcommand{\FI}{{\mathscr{FI}}}
\newcommand{\Sn}{\mathfrak{S}}
\newcommand{\M}{\mathcal{M}}
\newcommand{\N}{\mathcal{N}}
\newcommand{\arXiv}[1]{\href{http://arxiv.org/abs/#1}{\nolinkurl{arXiv:#1}}}
\newcommand{\arXivV}[2]{\href{http://arxiv.org/abs/#1}{\nolinkurl{arXiv:#1v#2}}}

\title{Stability Phenomena in the Homology of Tree Braid Groups} 

\author{Eric Ramos}
\address{Department of Mathematics, University of Wisconsin - Madison.}
\email{eramos@math.wisc.edu}

\thanks{The author was supported by NSF-RTG grant DMS-1502553.}

\begin{document}
\maketitle
\begin{abstract}
For a tree $G$, we study the changing behaviors in the homology groups $H_i(B_nG)$ as $n$ varies, where $B_nG := \pi_1(\UConf_n(G))$. We prove that the ranks of these homologies can be described by a single polynomial for all $n$, and construct this polynomial explicitly in terms of invariants of the tree $G$. To accomplish this we prove that the group $\bigoplus_n H_i(B_nG)$ can be endowed with the structure of a finitely generated graded module over an integral polynomial ring, and further prove that it naturally decomposes as a direct sum of graded shifts of squarefree monomial ideals. Following this, we spend time considering how our methods might be generalized to braid groups of arbitrary graphs, and make various conjectures in this direction.
\end{abstract}

\section{Introduction}

\subsection{Introductory remarks and statements of the main theorems}

In recent years there has been a push towards understanding the mechanisms connecting various well-known asymptotic stability results in topology and algebra. For instance, let $\N$ be a connected oriented manifold of dimension $\geq 2$, which is the interior of a manifold with boundary, and write $\Conf_n(\N)$ for the $n$-strand configuration space
\[
\Conf_n(\N) := \{(x_1,\ldots,x_n) \in \N^n \mid x_i \neq x_j\}.
\]
There is a natural action on $\Conf_n(\N)$ by the symmetric group $\Sn_n$, and we may therefore define the $n$-strand unordered configuration space
\[
\UConf_n(\N) := \Conf_n(\N)/\Sn_n.
\]
A classical theorem of McDuff \cite[Theorem 1.2]{McD} implies that for each $i$, and any $n \gg i$, the group $H_i(\UConf_n(\N))$ is independent of $n$. In contrast to the work of McDuff, it can be seen that the analogous statement is not true for the ordered configuration spaces $\Conf_n(\N)$. What is true, however, is perhaps the next best thing. It follows from work of Church, Ellenberg, and Farb \cite[Theorem 6.4.3]{CEF} that for any $i$, there is a polynomial $P \in \Q[x]$ such that the Betti number $\dim_\Q(H_i(\Conf_n(U);\Q))$ agrees with $P(n)$ for $n \geq 0$. Results of this type fall under the heading of what one might call asymptotic algebra.\\

The modern philosophy of asymptotic algebra can be roughly stated as follows: a family of algebraic objects which display asymptotic stability phenomena can often times be encoded into a single object, which is finitely generated in an appropriate abelian category. In the case of McDuff, for each $i$ and $n$ the group $H_i(\UConf_n(\N))$ can be realized as the $n$-graded piece of some finitely generated graded module over $\Z[x]$. The result of Church, Ellenberg, and Farb involves showing that the $\Sn_n$-representations $H_i(\Conf_n(U);\Q)$ are each constituents of some finitely generated representation of the category $\FI$, of finite sets and injections (see \cite{CEF} for more on this notion). This philosophy is also heavily featured in Sam and Snowden's recent resolution of Stembridge's conjecture \cite{SS}. The goal of this paper is to apply similar techniques to the homologies of the unordered configuration spaces of trees. Note that this problem was considered by L\"utgehetmann in his Master's thesis \cite{L}. The results of that work are disjoint from the current work.\\

In this paper, a \textbf{graph} will always refer to a connected, compact CW-complex of dimension 1. A \textbf{tree} is a graph which is contractible as a topological space. An \textbf{essential vertex} of $G$ is a vertex of degree, or valency, at least 3, while an \textbf{essential edge} of $G$ is a connected component of the space obtained by removing all essential vertices from $G$. Note that both the essential edges and vertices of a graph are unaffected by subdivision of edges, and can be thought of as the topologically essential pieces of the graph.\\

Our main result relates to the asymptotic behavior of the homologies of the braid group of a tree. To state this result, we first need to define a kind of connectivity invariant for trees.\\

Let $G$ be a tree. Then we set
\[
\Delta_G^i := \max_{\{\{v_{j_1},\ldots,v_{j_i}\} \mid v_{j_k}\text{ essential}\}} \{ \dim_\Q(H_0(G - \{v_{j_1},\ldots,v_{j_i}\};\Q))\}.
\]
In words, $\Delta_G^i$ is the maximum number of connected components that $G$ can be broken into by removing exactly $i$ essential vertices. Therefore $\Delta_G^1$ is the maximum degree of a vertex in $G$, while, if $G$ has $N_G$ essential vertices, $\Delta_G^{N_G}$ is the number of essential edges of $G$. By convention, $\Delta_G^0 = 1$, while $\Delta_G^i = 0$ for $i > N_G$.\\

\begin{thmab}\label{bettipolyrefine}
Let $G$ be a tree, and write $B_nG$ to denote the braid group $B_nG := \pi_1(\UConf_n(G))$. Then for each $i$ there is a polynomial $P_i \in \Q[x]$ of degree $\Delta_G^i-1$, such that for all $n \geq 0$,
\[
P_i(n) = \dim_\Q(H_i(B_nG;\Q)).
\]
\text{}\\
\end{thmab}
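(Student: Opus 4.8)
The plan is to realize $\bigoplus_n H_i(B_nG;\Q)$ as a finitely generated graded module over a polynomial ring, and then extract the Betti numbers as the Hilbert function of that module, reading off the degree of the Hilbert polynomial from structural information about the module. Let me think about how the pieces fit together and where the real work lies.

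First, I'd want a combinatorial model for $\UConf_n(G)$. The standard tool here is Abrams' discretized configuration space: after subdividing $G$ sufficiently (enough edges between essential vertices and enough leaves), the unordered discrete configuration space $\UD_n(G)$ is a CW-complex that is a deformation retract of $\UConf_n(G)$, and in fact is a nonpositively curved cube complex (work of Abrams, Ghrist). So $H_i(B_nG;\Q) = H_i(\UConf_n(G);\Q) = H_i(\UD_n(G);\Q)$, and I've reduced everything to the homology of an explicit cube complex whose cells are indexed by collections of vertices and disjoint edges of the subdivided graph.

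Next comes the asymptotic-algebra step: assemble the $\UD_n(G)$ for varying $n$ into a single algebraic object. The natural operation is to "add a particle" near a fixed leaf, which should give maps relating the chain complexes for consecutive $n$ and endow $\bigoplus_n C_\bullet(\UD_n(G))$ with the structure of a module over a polynomial ring $\Z[E]$, where the variables $E$ correspond to the edges (or leaves) where particles can be inserted. Passing to homology, $\bigoplus_n H_i(B_nG;\Q)$ becomes a graded module over $\Q[E]$. The first thing I'd prove is that this module is finitely generated; since trees have no cycles, the homology in each degree is controlled by finitely many "local" configurations near the essential vertices, and finite generation should follow from a Noetherianity/degree-bound argument on the cubical chain complex.

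The sharpest and most delicate part, and what I expect to be the main obstacle, is pinning down the precise degree $\Delta_G^i - 1$ of the Hilbert polynomial. For a finitely generated graded module $M$ over $\Q[E]$, the Hilbert function agrees with a polynomial of degree $\dim M - 1$ for large $n$, so I need two things: that this agreement holds for all $n\geq 0$ (not just $n\gg 0$), and that the Krull dimension of the module equals $\Delta_G^i$. The "for all $n\geq 0$" claim is where the squarefree-monomial-ideal structure advertised in the abstract becomes essential: I'd show that $\bigoplus_n H_i(B_nG;\Q)$ decomposes as a direct sum of graded shifts of squarefree monomial ideals, whose Hilbert functions are genuinely polynomial in every degree (their minimal free resolutions are linear enough to kill lower-order correction terms), thereby eliminating any sporadic small-$n$ behavior. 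For the dimension count, I'd identify the supporting monomial ideals combinatorially: a top-dimensional piece of $H_i$ corresponds to choosing $i$ essential vertices and spreading particles across the resulting connected components, so the number of free directions is exactly the maximal number of components obtainable by deleting $i$ essential vertices, namely $\Delta_G^i$. Matching these two descriptions — the homological/cubical side and the combinatorial $\Delta_G^i$ side — so that the Krull dimension comes out to exactly $\Delta_G^i$ and not merely a bound, is the crux; I'd expect to need a careful analysis of which products of edge-insertion operators act freely on homology classes, likely via a discrete Morse theory or an explicit basis argument on $\UD_n(G)$.
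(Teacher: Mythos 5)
Your overall strategy matches the paper's (discretize via Abrams' model, build a graded module over a polynomial ring, decompose into squarefree monomial ideals, read off the degree as $\Delta_G^i-1$), but your central construction has a genuine gap: the ring of operators you propose is generated by particle insertions at leaves, and that ring is too small. The paper's ring $S_G$ has one variable for \emph{each essential edge}, i.e.\ each connected component of $G$ minus its essential vertices, and some of these components contain no leaf at all. Take $G$ to be the tree with two degree-$3$ essential vertices $u,v$ joined by an edge: the segment between $u$ and $v$ is an essential edge with no leaf. Critical cells (equivalently, homology classes, since the Morse differential vanishes for trees) can have arbitrarily many particles piled up on that interior segment, blocked behind an edge of the cell at $u$; no leaf insertion can ever produce them, because an inserted particle flows until it is blocked \emph{inside the component where it entered} and can never cross the blockading edges into the interior segment. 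Concretely, for this tree $\Delta_G^2=5$ while $G$ has only $4$ leaves, so $\dim_\Q H_2(B_nG;\Q)$ grows like a polynomial of degree $4$, which is impossible for a finitely generated graded module over a $4$-variable polynomial ring: finite generation over the leaf-insertion ring simply fails. The paper avoids this by defining the action not on the cubical chain complex but directly on the Farley--Sabalka Morse complex: $x_{\mathfrak{e}}\cdot c$ inserts a particle into the component of $T$ minus the edges of $c$ containing $\mathfrak{e}$, at its canonical blocked position, with well-definedness coming from Lemma \ref{critlemma}. Note the paper even poses as an open problem (Question 4) whether this action is induced by any map of spaces $\UConf_n(G)\to\UConf_{n+1}(G)$, so your chain-level, topologically induced construction is not a detail to be filled in; it is precisely the missing idea.

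A second, smaller gap: you claim squarefree monomial ideals have Hilbert functions ``genuinely polynomial in every degree,'' and use this to get agreement for all $n\geq 0$. That is false in general: the ideal $(x_1,\ldots,x_d)\subset\Q[x_1,\ldots,x_d]$ is squarefree, its Hilbert function is $0$ at $n=0$ but equals $\binom{n+d-1}{d-1}$ for $n\geq 1$, and these do not agree at $n=0$. The all-$n$ statement in the paper is not formal Stanley--Reisner theory; it comes from computing the Hilbert series of each summand $N^{(\overline{v},\overline{l})}$ explicitly as
\[
\frac{t^i\prod_j\bigl(1-(1-t)^{l_j}\bigr)}{(1-t)^{\mu(\overline{v})}}
\]
and then proving the strict inequality $i+|\overline{l}|<\mu(\overline{v})$ (each chosen essential vertex has an adjacent essential edge that can never house a witness vertex, and the essential edge containing the root never can), so that the numerator degree is strictly smaller than the denominator degree. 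That strict inequality, a combinatorial fact about witnesses, is what kills the sporadic small-$n$ behavior; without it your argument only gives agreement for $n\gg 0$.
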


\begin{remark}
It follows from a Theorem of Ghrist \cite[Theorem 3.3]{Gh} that given any graph $G$ not homeomorphic to $S^1$, $H_i(B_nG) = 0$ for all $i$ strictly greater than the number of essential vertices of $G$. This is realized in the case where $G$ is a tree in Theorem \ref{bettipolyrefine} by the fact that $\Delta_G^i - 1 = -1$ in these cases.\\
\end{remark}

The polynomial $P_i$ of Theorem \ref{bettipolyrefine} is explicitly computed throughout the course of this work (see Theorem \ref{explicitpoly}). This computation implies something somewhat surprising about these homology groups.\\

\begin{corab}\label{degreesequence}
Let $G$ be a tree, and let $i \geq 0$. Then the homology groups $H_i(B_nG)$ depend only on $i, n$, and the degree sequence of $G$.\\
\end{corab}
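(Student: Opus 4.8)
The plan is to deduce Corollary \ref{degreesequence} from the explicit computation of the Betti polynomials $P_i$ promised in Theorem \ref{explicitpoly}, by observing that every invariant of the tree $G$ that enters that computation is itself determined by the degree sequence of $G$. The key point is that the only combinatorial data the polynomial $P_i$ can depend on are the numbers $\Delta_G^j$ for $0 \leq j \leq N_G$, together with whatever finer bookkeeping (counts of essential vertices of each valency, arrangements of essential edges, and graded shifts of the associated squarefree monomial ideals) appears in the module decomposition described in the abstract. So the heart of the argument is a single claim: each of these invariants is a function of the degree sequence alone.

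First I would isolate exactly which invariants of $G$ appear in the formula for $P_i$ from Theorem \ref{explicitpoly}. I expect these to be expressible in terms of the quantities $\Delta_G^i$, since those are precisely the connectivity invariants introduced before Theorem \ref{bettipolyrefine}. The crucial structural observation is that for a tree, removing a set of $i$ essential vertices $\{v_{j_1},\ldots,v_{j_i}\}$ yields a number of connected components equal to $1 + \sum_k (\deg(v_{j_k}) - 1)$ minus a correction counting adjacencies among the removed vertices; but because we are taking a \emph{maximum} over all such sets in the definition of $\Delta_G^i$, the extremal configuration is always achieved by choosing vertices that are pairwise non-adjacent (spreading them out in the tree), so that the components simply add. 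Consequently $\Delta_G^i = 1 + \max \sum_{k=1}^i (\deg(v_{j_k}) - 1)$, a maximum taken over $i$-subsets of essential vertices with no two adjacent.

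The main obstacle is the adjacency constraint: a priori, whether $i$ essential vertices of prescribed high degrees can be chosen pairwise non-adjacent depends on the tree's structure, not merely its degree sequence. The key lemma I would prove to overcome this is that in any tree, one can always realize the maximizing non-adjacent selection by choosing the $i$ essential vertices of largest degree, so that the adjacency constraint never actually binds at the optimum. Intuitively, since a tree has no cycles, any two vertices can be separated, and a greedy argument shows that the top-degree vertices can be assembled into an independent set without sacrificing total degree; a clean way to see this is to note that if two chosen high-degree vertices were adjacent, one could slide the selection along the tree to an equally good non-adjacent configuration, using that each edge removal from a tree strictly increases component count. Granting this lemma, $\Delta_G^i = 1 + \sum_{k=1}^i (d_k - 1)$ where $d_1 \geq d_2 \geq \cdots$ is the sorted degree sequence of the essential vertices, which is manifestly a function of the degree sequence alone.

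Finally I would close the argument by verifying that no invariant beyond the $\Delta_G^i$ survives into $P_i$. Since Theorem \ref{bettipolyrefine} already asserts $\deg P_i = \Delta_G^i - 1$, the degree of $P_i$ is controlled by the degree sequence; and the explicit formula of Theorem \ref{explicitpoly} should express each coefficient of $P_i$ through the same vertex-deletion component counts, all of which the preceding lemma recasts in terms of the sorted degrees. Combining these, $P_i$ depends only on $i$ and the degree sequence, hence so does its value $\dim_\Q(H_i(B_nG;\Q)) = P_i(n)$ for every $n \geq 0$, which is exactly the assertion of Corollary \ref{degreesequence}. I expect the lemma on realizing the extremal deletion by an independent set of top-degree vertices to be the one genuinely nontrivial step; everything else is extraction from the explicit polynomial.
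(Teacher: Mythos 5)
Your high-level plan is the same as the paper's: Corollary \ref{degreesequence} is extracted from Theorem \ref{explicitpoly}, by observing that the constants $a^{(\overline{v},\overline{l})}_{i,r}$ depend only on $\overline{l}$ (whose allowed range is set by the degrees $\mu(v_{j_k})$), and the polynomials $P^{\overline{v}}_i$ depend only on $\mu(\overline{v})$, so the whole sum is determined by the multiset of degrees of essential vertices. However, the step you single out as ``the one genuinely nontrivial step'' --- your key lemma about non-adjacent selections --- is both unnecessary and false, and it rests on a misreading of the definition of $\Delta_G^i$ (and of $\mu(\overline{v})$). In the paper, $\Delta_G^i$ is defined via $H_0(G - \{v_{j_1},\ldots,v_{j_i}\};\Q)$, i.e.\ the vertices are deleted as \emph{points of the topological space} $G$, not by graph-theoretic vertex deletion. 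If two removed vertices are adjacent, the open edge between them survives as a connected component, so there is no adjacency correction at all: removing any $i$ points of degrees $d_1,\ldots,d_i$ from a tree yields exactly $1 + \sum_k (d_k - 1)$ components, for \emph{every} choice of $i$-subset. Hence $\Delta_G^i = 1 + \sum_{k=1}^i (d_k - 1)$ for the $i$ largest essential degrees, with no independent-set lemma required; moreover, in the setting where Theorem \ref{explicitpoly} is proved, $G$ has already been subdivided so that no two essential vertices are adjacent, and $\mu(\overline{v}) = 1 + \sum_k (\mu(v_{j_k}) - 1)$ identically.

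Read instead as a statement about graph-theoretic deletion (which is what your correction term counting adjacencies presupposes), the lemma fails. Take $u$ adjacent to $v$, both of degree $5$, and a third essential vertex $w$ of degree $3$ adjacent to $v$, all other neighbors being leaves. The two top-degree vertices $u,v$ are adjacent, and no ``sliding'' is available: the vertices of a fixed tree cannot be moved, and the only non-adjacent pair $\{u,w\}$ has strictly smaller total degree. With $i = 2$, graph-theoretic deletion is maximized at the adjacent pair ($8$ components), your non-adjacent maximum is $7$, and the paper's $\Delta_G^2$ is $9$; all three differ, so your argument computes the wrong invariant. The fix is not a better combinatorial lemma but the observation about the topological convention above, after which your extraction of the corollary from Theorem \ref{explicitpoly} goes through and coincides with the paper's own remark following that theorem. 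One last small point: the corollary concerns the integral homology groups, not just Betti numbers, so you should also invoke Theorem \ref{treefree} (freeness of $H_i(B_nG)$ for trees) to conclude that the rank determines the group.
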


It is interesting to note that the rank of $H_i(B_nG)$ agrees with a polynomial for all $n \geq 0$, as opposed to only agreeing for $n$ sufficiently large. In the case of configuration spaces of manifolds, a result of this kind does have precedent. We have already discussed the result of Church, Ellenberg, and Farb which states that if $\N$ is an oriented manifold, which is the interior of a manifold with non-empty boundary, then for any $i$ the dimension of $H_i(\Conf_n(\N);\Q)$ agrees with a polynomial for all $n$ \cite[Theorem 6.4.3]{CEF}. It is perhaps an interesting observation that trees can be thought of as graphs with non-trivial boundary. It is unclear whether the connection to the work of Church, Ellenberg, and Farb goes any deeper than this, however. \\

\subsection{An Outline of the proof}

To prove Theorem \ref{bettipolyrefine}, we will rely on classical techniques in commutative algebra, as well as more modern techniques in combinatorial topology. The first key ingredient is the discrete Morse theory of Forman \cite{Fo}. Given a regular CW complex $X$ (see Definition \ref{regcw}), write $\K_i$ for the set of $i$-cells of $X$. A \textbf{discrete Morse function} is a map $f$ from the cells of $X$ to $\R$ satisfying the following two hypotheses for all cells $\sigma \in \K_i$:

\begin{enumerate}
\item $|\{\tau \in \K_{i+1} \mid \sigma \subseteq \overline{\tau} \text{ and } f(\sigma) \geq f(\tau)\}| \leq 1$; \label{dm1}
\item $|\{\tau \in \K_{i-1} \mid \tau \subseteq \overline{\sigma} \text{ and } f(\sigma) \leq f(\tau)\}| \leq 1$. \label{dm2}
\end{enumerate}

We call $\sigma \in \K_i$ a \textbf{critical $i$-cell} with respect to $f$, if the sets of conditions $\ref{dm1}$ and $\ref{dm2}$ are both empty. The main consequence of discrete Morse theory is that the critical cells of $X$ determine its homotopy type. Formally, let $a \in \R$ and write $X(a)$ for the subcomplex of $X$ comprised of the closures of cells $\sigma$ with $f(\sigma) \leq a$. If there is no critical cell $\sigma$ with $a < f(\sigma) < b$, then $X(a)$ is a deformation retract of $X(b)$. Otherwise, if there is a unique critical cell $\sigma$ with $a < f(\sigma) < b$, then $X(b)$ is obtained from $X(a)$ by attaching the cell $\sigma$. Moreover, there is a complex
\begin{eqnarray}
\ldots \rightarrow \M_i \rightarrow \ldots \stackrel{\widetilde{\partial}}\rightarrow \M_1 \rightarrow \M_0 \rightarrow 0\label{mc}
\end{eqnarray}
where $\M_i$ is a free $\Z$-module on the critical $i$-cells of $X$, whose homology is the homology of the space $X$. We call the differential $\widetilde{\partial}$ the \textbf{Morse differential} (see Definition \ref{morsediff}).\\

Using work of Abrams \cite{A}, Farley and Sabalka were able to impose a discrete Morse structure on the spaces $\UConf_n(G)$, where $G$ is any graph \cite{FS}. We will spend a good amount of time recounting the construction of Farley and Sabalka in Section \ref{confg}. Once we have accomplished this, our strategy will be to develop a strong understanding of the critical cells. More specifically, we will work towards understanding the changing behaviors of the critical cells as $n$ varies.\\

Let $G$ be a graph with $E$ essential edges, and write $S_G := \Z[x_1,\ldots,x_E]$ for the integral polynomial ring in $E$-variables. We will prove the following in Section \ref{morsecomplexismodule}.\\

\renewcommand{\labelitemi}{(\textasteriskcentered)}
\begin{itemize}
\item For each $i$, there exists a finitely generated graded $S_G$-module $\M_{i,\dt}$ for which $\M_{i,n}$ is a free $\Z$-module with basis vectors indexed by the critical $i$-cells of $\UConf_n(G)$.\\
\end{itemize}

Specializing to the case where $G$ is a tree, work of Farley \cite{Fa} implies that the Morse differential is always trivial. Using the complex (\ref{mc}) we obtain the following.\\

\begin{thmab}\label{homologyismodule}
Let $G$ be a tree with $E$ essential edges, and let $S_G$ denote the integral polynomial ring in $E$ variables. Then for each $i$ and $n$, the action of $S_G$ on the critical cells of $\UConf_n(G)$ described in Section \ref{morsecomplexismodule} imposes the structure of a finitely generated graded $S_G$-module on the abelian group $\bigoplus_{n \geq 0} H_i(\UConf_n(G))$. In particular, there exists a finitely generated graded $S_G$-module $\mathcal{H}_i$ such that,
\[
(\mathcal{H}_i)_n \cong H_i(\UConf_n(G)).
\]
\text{}\\
\end{thmab}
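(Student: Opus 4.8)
The plan is to assemble the Morse complex (\ref{mc}) into a single complex in the category of finitely generated graded $S_G$-modules, and then to use the vanishing of the Morse differential for trees to read off the homology at the level of chains.

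First I would recall the starred result (\textasteriskcentered) of Section \ref{morsecomplexismodule}: for each homological degree $i$ there is a finitely generated graded $S_G$-module $\M_{i,\dt}$ whose $n$-th graded piece $\M_{i,n}$ is the free $\Z$-module on the critical $i$-cells of $\UConf_n(G)$. The first thing to verify is that the Morse differential $\widetilde{\partial}$ is not merely $\Z$-linear on each graded piece but is in fact a morphism of graded $S_G$-modules, so that
\[
\ldots \to \M_{i,\dt} \xrightarrow{\widetilde{\partial}} \ldots \to \M_{1,\dt} \to \M_{0,\dt} \to 0
\]
is a complex in graded $S_G$-modules. Concretely this amounts to checking that multiplication by a variable $x_j$---the operation of inserting an extra strand into the $j$-th essential edge, which carries $\M_{i,n}$ to $\M_{i,n+1}$---commutes with $\widetilde{\partial}$; this compatibility is governed by the cellular description of the action set up in Section \ref{morsecomplexismodule}. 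Granting it, I would define $\mathcal{H}_i$ to be the $i$-th homology of this complex of graded $S_G$-modules, which is then automatically a graded $S_G$-module.

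Next I would specialize to the case in which $G$ is a tree and invoke Farley's theorem that the Morse differential is trivial. With every $\widetilde{\partial}$ equal to zero, the homology of the complex in position $i$ is just the term itself, so $\mathcal{H}_i \cong \M_{i,\dt}$; finite generation and the grading are therefore inherited directly from (\textasteriskcentered). Finally, passing to the $n$-th graded piece and applying the fundamental consequence of discrete Morse theory---that the homology of (\ref{mc}) is the homology of $\UConf_n(G)$---yields
\[
(\mathcal{H}_i)_n \;=\; H_i(\M_{\dt,n}) \;\cong\; H_i(\UConf_n(G))
\]
for every $n$, which is exactly the claimed isomorphism.

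The substantive difficulty of the whole package lives upstream, in the construction and finite generation asserted by (\textasteriskcentered); once that is granted, the present theorem is close to formal. Within this argument the one point demanding genuine care is the $S_G$-equivariance of $\widetilde{\partial}$, since only after the complex is known to live in graded $S_G$-modules can its homology be interpreted as such a module. It is worth noting that for trees this equivariance question is ultimately vacuous, because Farley's vanishing makes the differential the zero map; the reason to record it is conceptual, as it is what licenses calling $\bigoplus_n H_i(\UConf_n(G))$ an $S_G$-module and sets up the generalization to arbitrary graphs discussed later in the paper.
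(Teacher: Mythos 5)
Your proposal is correct and takes essentially the same route as the paper: the statement (\textasteriskcentered) supplies the finitely generated graded module $\M_{i,\dt}$, Farley's vanishing of the Morse differential (Theorem \ref{treefree2}) gives $\mathcal{H}_i \cong \M_{i,\dt}$, and Theorem \ref{morsediffcomp} identifies each graded piece with $H_i(\UConf_n(G))$. Your closing observation --- that the $S_G$-equivariance of $\widetilde{\partial}$ is vacuous for trees because the differential is zero --- is exactly the point the paper exploits, and for general graphs that equivariance is precisely what the paper leaves open as Questions 1 and 2.
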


A result of Ghrist and Abrams (see Theorem \ref{asp}) implies the spaces $\UConf_n(G)$ are aspherical. It follows immediately from this that $H_i(\UConf_n(G)) \cong H_i(B_nG)$. With this in mind, the first part of Theorem \ref{bettipolyrefine} simply follows from the existence of the Hilbert polynomial. Of course, the above theorem does not tell us anything about the degree of the Hilbert polynomial, nor does it bound its obstruction. To accomplish this, we must first prove a structure theorem about the modules $\mathcal{H}_i$.\\

To state this theorem, we first recall the definition of a \textbf{squarefree monomial ideal}. We say an ideal $I \subseteq \Q[x_1,\ldots,x_d]$ is a squarefree monomial ideal, if it contains a generating set of monomials, none of which are divisible by a square in $\Q[x_1,\ldots,x_d]$. These ideals are the subject of Stanley-Reisner theory, and have many desirable properties. For instance, much is known about their Hilbert polynomial (see \cite{MS} for a reference on the subject).\\

\begin{thmab}\label{sqfr}
Let $G$ be a tree, and let $\mathcal{H}_i$ denote the $S_G$-module of Theorem \ref{homologyismodule}. Then $\mathcal{H}_i \otimes_\Z \Q$ is isomorphic to a direct sum of graded twists of squarefree monomial ideals, each having dimension at most $\Delta^i_G$.\\
\end{thmab}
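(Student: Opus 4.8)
The plan is to read the squarefree structure directly off the combinatorics of the Farley--Sabalka critical cells \cite{FS}, using that for a tree the Morse differential vanishes \cite{Fa}, so that $\mathcal{H}_i$ is, as a graded $S_G$-module, nothing but the free $\Z$-module $\M_{i,\dt}$ on the critical $i$-cells equipped with the strand-addition action of Section \ref{morsecomplexismodule}. First I would partition the critical $i$-cells by their \emph{essential type} $\tau$: the data of the $i$ essential vertices carrying a moving (disrespectful) strand together with the chosen disrespectful direction at each, remembering everything about the cell except the placement of the remaining blocked stationary strands. Since multiplication by a variable $x_e$ merely inserts a blocked strand into the essential edge $e$ and never disturbs the moving strands, this action preserves essential type; hence $\mathcal{H}_i$ splits integrally as an internal direct sum $\bigoplus_\tau M_\tau$ of graded $S_G$-submodules, one for each essential type $\tau$ realized in some $\UConf_n(G)$, and it suffices to analyze a single summand after tensoring with $\Q$.

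Fixing $\tau$, the critical cells of that type are exactly the legal distributions of blocked strands among the essential edges subject to the blocking constraints that make the $i$ chosen edges disrespectful. I would record such a distribution as a monomial $\prod_e x_e^{a_e}$, where $a_e$ counts the blocked strands in edge $e$, so that $M_\tau \otimes_\Z \Q$ is identified with the $\Q$-span of a set of monomials closed under multiplication by every variable that preserves criticality. Letting $d_\tau$ be the strand count of the minimal cell of type $\tau$, this exhibits $M_\tau \otimes_\Z \Q \cong I_\tau(-d_\tau)$ for a monomial ideal $I_\tau$ in the polynomial ring on the available edges. The point to verify is that the minimal generators of $I_\tau$ are \emph{squarefree}: criticality only demands the \emph{presence} of a blocking strand on each smaller edge adjacent to a disrespectful vertex, never a prescribed multiplicity, so every minimal generator has all exponents at most one.

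For the dimension bound I would determine which variables act freely on $M_\tau$. Deleting the $i$ essential vertices of $\tau$ from $G$ disconnects it into at most $\Delta_G^i$ components, and a blocked strand may be slid freely within the component containing it without leaving its critical class; consequently all essential edges lying in a single component contribute only one free direction, and the action factors through the polynomial ring with one variable per component. Thus $I_\tau$ is a squarefree monomial ideal in at most $\Delta_G^i$ variables, and being a nonzero ideal in that ring it has Krull dimension equal to the number of relevant components, which is at most $\Delta_G^i$. Summing over $\tau$ yields the theorem.

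I expect the genuine obstacle to lie in the two combinatorial facts underlying the last two paragraphs, rather than in the formal module-theoretic splitting: that the blocking constraints are multiplicity-free, which gives squarefreeness, and, more delicately, that strands sliding within a single component are identified so that each component supplies exactly one free variable rather than one per edge. Pinning these down requires careful bookkeeping of the Farley--Sabalka reduced and critical cells under the strand-insertion maps, and it is precisely here that the substance of the theorem resides.
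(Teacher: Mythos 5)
Your global strategy is the same as the paper's: you split $\M_{i,\dt}\otimes_\Z\Q$ according to the $i$ essential vertices and the chosen direction at each (this is precisely the paper's pair $(\overline{v},\overline{l})$ of Definition \ref{esspair}), and you aim to identify each summand with a graded twist of a squarefree monomial ideal in a polynomial ring having one variable per connected component of $G-\{v_{j_1},\ldots,v_{j_i}\}$, so that the dimension is at most $\mu(\overline{v})\leq\Delta_G^i$. The genuine gap is in your second paragraph, where the identification is actually performed. The assignment $c\mapsto\prod_e x_e^{a_e}$, with one variable per essential edge and $a_e$ the number of strands lying in $e$, is not a map of graded $S_G$-modules, so it cannot exhibit your $M_\tau\otimes\Q$ as a monomial ideal in ``the polynomial ring on the available edges.'' The reason is that multiplication by $x_{\mathfrak{e}}$ does not insert a strand into $\mathfrak{e}$: it inserts the smallest available vertex of the component of $T$ minus the edges of $c$ that contains $\mathfrak{e}$, and strands pile up against the root end of that component. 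If that component also contains an essential edge $\mathfrak{e}'$ closer to the root which is not yet full, the new strand lands in $\mathfrak{e}'$, so the exponent vector of $x_{\mathfrak{e}}\cdot c$ is not that of $c$ plus the unit vector at $\mathfrak{e}$; and when a pile reaches a non-chosen essential vertex (which lies in no essential edge), the total degree $\sum_e a_e$ no longer even equals the number of vertex-strands, so the map is not graded. Passing to one variable per component only afterwards, as in your third paragraph, does not repair this: you would be quotienting along a map that was never a module map in the first place, and your $I_\tau$ was never shown to be a module at all.

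The correct move, and what the paper does in Definition \ref{esspair} and Proposition \ref{monoideal}, is to work from the start over the ring $S^{(\overline{v},\overline{l})}$ with one variable per connected component of $G-\{v_{j_1},\ldots,v_{j_i}\}$, recording only the number of strands in each component. That assignment is equivariant by the very definition of the action, and it is injective because a critical cell is determined by its edges together with its per-component strand counts (Lemma \ref{critlemma}); note that this fact, and likewise the claim that every critical cell of a fixed type is an $S_G$-multiple of a degree-$2i$ generator (Lemma \ref{fgcomplex}), relies on the standing assumption that $G$ is sufficiently subdivided for the given $n$ --- otherwise strands could pile past a non-chosen essential vertex and branch into several directions, and counts would no longer determine the cell. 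Finally, the two facts you defer as ``careful bookkeeping'' are not side issues but the remaining substance of Theorem \ref{sqfr}, and the paper supplies them: squarefreeness of the generators holds because a witness is a child of its essential vertex, so witnesses for distinct edges of a critical cell are separated by the chosen vertices and hence lie in distinct components; and the identification of same-component variables is the observation recorded after Definition \ref{esspair}. As written, your proposal has the right skeleton, but its central isomorphism is false as stated and its two key combinatorial inputs are assumed rather than proven.
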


We will find that this theorem implies the conclusions of Theorem \ref{bettipolyrefine}. In fact, we will be able to compute the polynomial $P_i$ associated to $\mathcal{H}_i$ explicitly in terms of invariants of the tree $G$ (see Theorem \ref{explicitpoly}).\\

\subsection{An overview of the paper}

In the next section, we will spend time developing necessary background. This includes short summaries of discrete Morse theory (Section \ref{dmorsethy}), and configuration spaces of graphs (Sections \ref{confg} and \ref{dmorse}). Following this, we use the machinery developed in these preliminary sections to prove the statement (\textasteriskcentered) (Section \ref{morsecomplexismodule}). Finally, we specialize the the case of trees, and use enumerative combinatorial methods to prove Theorem \ref{bettipolyrefine} via an explicit computation of the Hilbert polynomial (Sections \ref{caseoftrees} and \ref{hilbertcomp}).\\

To finish the paper, we briefly consider the case of a general graph $G$. Note that while most of the explicit results in this paper are limited to the case where $G$ is a tree, the statement (\textasteriskcentered) will hold for any graph $G$. A result like Theorem \ref{homologyismodule} will therefore hold for general graphs so long as we know that the Morse differential commutes with the action of $S_G$ on $\M_{i,\dt}$. It is the belief of the author that the action of $S_G$, or perhaps a slight alteration there of, will indeed commute with this differential. Unfortunately, it is known that this differential can become tremendously complicated as $G$ increases in complexity (see \cite{KP}). In any case, the methods in this work therefore provide at least a strategy for proving stability results for more general graphs. In the final sections, we discuss some implications of this.\\

\section*{Acknowledgments}
The author would like to send thanks to Jordan Ellenberg for his many insights, as well as his help in editing this paper. The author would also like to send special thanks to Steven Sam for his vital advice in approaching the primary problem of this work. A great amount of gratitude must also be sent to Daniel Farley, whose expertise in the field was invaluable to the author during his learning of the material. Finally, the author would like the acknowledge the generous support of the National Science Foundation through NSF-RTG grant DMS-502553.\\

\section{Preliminaries}
\subsection{Discrete Morse theory}\label{dmorsethy}

We now take the time to briefly summarize the key points in Forman's discrete Morse theory \cite{Fo}. We will largely be following the exposition of Forman \cite{Fo}\cite{Fo2}, Farley and Sabalka \cite{FS}, and Ko and Park \cite{KP}.\\

In the introduction, we spent some time discussing the notion of discrete Morse function. One thing that should stand out about this definition is that the literal values of the function are immaterial. Namely, the classification of critical cells is unchanged by composition with any strictly monotone function $\R \rightarrow \R$. In many cases it is often easier to construct the relationships between the cells, rather than the discrete Morse function itself. This hints towards the construction of what are known as discrete vector fields. We will use this approach during the exposition of this, and all future sections.\\

To begin, we first must place certain light restrictions on the spaces we will be working with.\\

\begin{definition}\label{regcw}
Let $X$ be a CW complex. A \textbf{cell} of $X$ will always refer to an open cell in $X$. Given a cell $\sigma$ of dimension $i$, we will often write $\sigma^{(i)}$ to indicate that $\sigma$ has dimension $i$. We will write $\K$ to denote the set of cells of $X$, and $\K_i$ to denote the set of $i$-cells of $X$.\\

A cell $\tau^{(i)} \subseteq \overline{\sigma^{i+1}}$ is said to be a \textbf{regular face} of a cell $\sigma^{(i+1)}$ if, given a characteristic map $\Phi_\sigma:D^{i+1} \rightarrow X$ for $\sigma^{(i+1)}$, $\Phi_{\sigma}^{-1}(\tau)$ is a closed ball, and the map $\Phi_{\sigma}|_{\Phi_\sigma^{-1}(\tau)}$ is a homeomorphism. We say that the complex $X$ is \textbf{regular} if given a pair of cells $\tau^{(i)} \subseteq \overline{\sigma^{(i+1)}}$, $\tau^{(i)}$ is a regular face of $\sigma^{(i+1)}$. Equivalently, $X$ is regular if and only if the attaching map of each of its cells is a homeomorphism.
\end{definition}

We will assume throughout most of our exposition that $X$ is a regular CW complex. In his original paper on discrete Morse theory \cite{Fo}, Forman proves some of his results without the requirement that $X$ be regular. The spaces $\UConf_n(G)$, for any graph $G$, are actually cubical complexes, which are certainly regular CW complexes. Therefore, the condition that $X$ be regular is not restrictive for what we need.

\begin{definition}
Let $X$ be a regular CW complex. A \textbf{discrete vector field} $V$ on $X$ is a collection of partially defined functions $V_i:K_i \rightarrow K_{i+1}$ satisfying the following three conditions for each $i$:
\begin{enumerate}
\item $V_i$ is injective;
\item the image of $V_i$ is disjoint from the domain of $V_{i+1}$;
\item for any $\sigma^{(i)}$ in the domain of $V_i$, $\sigma^{(i)}$ is a face of $V_i(\sigma^{(i)})$.\\
\end{enumerate}

Given a regular CW complex $X$ equipped with a discrete vector field $V$, a \textbf{cellular path} between two cells $\alpha^{(i)}$ and $\beta^{(i)}$ is a finite sequence of $i$-cells
\[
\alpha^{(i)} = \alpha_0^{(i)}, \alpha_1^{(i)}, \ldots, \alpha_{l-1}^{(i)}, \alpha_l^{(i)} = \beta^{(i)}
\]
such that $\alpha_{j+1}^{(i)}$ is a face of $V_i(\alpha_{j}^{(i)})$. We say that the path is \textbf{closed} if $\alpha^{(i)} = \beta^{(i)}$, and we say it is \textbf{trivial} if $\alpha_j^{(i)} = \alpha_{k}^{(i)}$ for all $j,k$.\\

A discrete vector field $V$ is said to be a \textbf{discrete gradient vector field} if it admits no non-trivial closed cellular paths.\\

If $V$ is a discrete gradient vector field on a regular CW complex $X$, then we call a cell $\sigma$ of $X$ \textbf{redundant} if $\sigma$ is in the domain of $V_i$ for some $i$, \textbf{collapsible} if it is in the image of $V_i$ for some $i$, and \textbf{critical} otherwise.\\
\end{definition}

\begin{proposition}[\cite{FS} Proposition 2.2, \cite{Fo} Theorem 3.4]\label{VFdecomp}
Let $X$ be a regular CW complex equipped with a discrete gradient vector field $V$. Consider the filtration
\[
\emptyset = X_0'' \subseteq X_0' \subseteq X_1'' \subseteq X_1' \subseteq \ldots \subseteq X_n'' \subseteq X_n' \subseteq \ldots
\]
where $X_i'$ is the $i$-skeleton of $X$ with the redundant cells removed, and $X_i''$ is the $i$-skeleton of $X$ with both the redundant and critical cells removed. Then:
\begin{enumerate}
\item For any $i$, $X'_i$ is obtained from $X''_i$ by attaching $m_i$ $i$-cells to $X_n''$ along their boundaries, where $m_i$ is the number of critical $i$-cells of the discrete gradient vector field $V$.
\item For any $i$, $X_{i+1}''$ deformation retracts onto $X_i'$.\\
\end{enumerate}
\end{proposition}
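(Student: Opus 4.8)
The plan is to prove the two assertions by the two standard mechanisms of discrete Morse theory: attaching a critical cell is a genuine CW cell attachment, whereas a matched redundant--collapsible pair is an elementary collapse, hence a deformation retract. Throughout I would use that $X$ is regular, so that every incidence $\tau \subseteq \overline{\sigma}$ is a regular face; in particular the attaching map of each cell is a homeomorphism onto a subcomplex, and the notion of a free face behaves as in the classical simplicial theory. I first record the bookkeeping, reading the filtration so that the $X_i'$ and $X_i''$ are genuine subcomplexes: $X_i''$ consists of every cell of dimension $\le i-1$ together with the collapsible $i$-cells, $X_i'$ adds the critical $i$-cells, and $X_{i+1}''$ further adds the redundant $i$-cells and the collapsible $(i+1)$-cells. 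Thus $X_i' \setminus X_i''$ is exactly the set of critical $i$-cells, while $X_{i+1}'' \setminus X_i'$ is exactly the set of redundant $i$-cells together with the collapsible $(i+1)$-cells.

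For assertion (1), I would observe that each critical $i$-cell $\sigma$ is a top-dimensional cell of $X_i'$, so its boundary $\partial \sigma$ lies in the $(i-1)$-skeleton, which is contained in $X_i''$. Since distinct critical $i$-cells share no incidences beyond their boundaries, $X_i'$ is the pushout of $X_i''$ along the disjoint union of the attaching maps $\partial D^i \to X_i''$, one for each of the $m_i$ critical $i$-cells. Regularity guarantees these attaching maps are embeddings, so this is an honest simultaneous attachment of $m_i$ $i$-cells along their boundaries, which is precisely the claim. This step is essentially formal.

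The substance is in assertion (2). Here I would pair each redundant $i$-cell $\sigma$ with the collapsible $(i+1)$-cell $V_i(\sigma)$; by condition (3) in the definition of a discrete vector field, together with regularity of $X$, $\sigma$ is a regular face of $V_i(\sigma)$. Collapsing such a pair is a deformation retract provided $\sigma$ is a \emph{free} face at the moment of collapse, i.e.\ $\sigma$ is a face of no surviving cell other than $V_i(\sigma)$. Every $(i+1)$-cell of $X_{i+1}''$ is collapsible, hence of the form $V_i(\sigma')$, so the only obstruction to freeness is the existence of another surviving redundant cell $\sigma'$ with $\sigma \subseteq \overline{V_i(\sigma')}$. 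I would encode this in the digraph $D$ on the redundant $i$-cells with an edge $\sigma' \to \sigma$ whenever $\sigma \neq \sigma'$ and $\sigma \subseteq \overline{V_i(\sigma')}$; a directed cycle in $D$ is precisely a nontrivial closed cellular $V$-path.

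The hard part, and the only place the gradient hypothesis is used, is exactly this: because $V$ admits no nontrivial closed cellular paths, $D$ is acyclic, so it has a topological ordering. Collapsing the matched pairs in an order in which each $\sigma$ is removed only after all of its in-neighbors in $D$ have been removed guarantees that $\sigma$ is free at the moment it is collapsed (the sources of $D$ are already free, and removing a pair only deletes incidences). Performing these elementary collapses in this order deformation retracts $X_{i+1}''$ onto the subcomplex obtained by deleting all redundant $i$-cells and all collapsible $(i+1)$-cells, which is precisely $X_i'$. For the finite complexes $\UConf_n(G)$ this is a finite composition of deformation retracts; in the general case one notes that $X$ is locally finite and assembles the retraction skeleton by skeleton.
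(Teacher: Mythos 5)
The paper gives no proof of this proposition at all: it is imported background, stated with citations to Forman (Theorem 3.4) and Farley--Sabalka (Proposition 2.2), so there is no in-paper argument to compare against. Your proof is correct and is essentially the standard argument underlying those citations: part (1) is the formal observation that the critical $i$-cells attach along maps into the $(i-1)$-skeleton $\subseteq X_i''$, and part (2) uses the absence of nontrivial closed $V$-paths to get an acyclic incidence digraph on the redundant $i$-cells, whence a topological order in which each matched pair $(\sigma, V_i(\sigma))$ is a free regular face pair at the moment of its elementary collapse; your disambiguation of the filtration (removing only redundant/critical cells of top dimension $i$, which is the only reading that makes the filtration nested) is also the intended one. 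The single soft spot is your closing remark on infinite complexes --- local finiteness alone does not obviously yield the skeleton-by-skeleton assembly, since an acyclic digraph with finite in-degrees can still have infinite descending ancestry --- but this is immaterial here, as Forman's theorem and every complex used in the paper (the $UD_n(G)$) are finite.
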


The above proposition leads to one notable corollary, which we record now.\\

\begin{corollary}[\cite{FS} Proposition 2.3, \cite{Fo} Corollary 3.5]\label{critdecomp}
Let $X$ be a regular CW complex equipped with a discrete gradient vector field $V$. Then $X$ is homotopy equivalent to a CW complex with precisely $m_i$ $i$-cells for each $i$, where $m_i$ is the number of critical $i$-cells of $V$.\\
\end{corollary}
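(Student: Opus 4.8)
The plan is to bootstrap the single-dimension statements of Proposition \ref{VFdecomp} into a global homotopy equivalence by an induction on skeleta, using only the standard homotopy-theoretic behavior of cell attachments. Concretely, I would construct, by induction on $i$, a CW complex $Y_i$ having exactly $m_j$ cells of dimension $j$ for each $j \leq i$ (and none in higher dimensions), together with a homotopy equivalence $g_i \colon Y_i \xrightarrow{\simeq} X_i'$, arranged so that $Y_i$ is a subcomplex of $Y_{i+1}$. The base case is immediate: applying part (1) of Proposition \ref{VFdecomp} with $i = 0$ and $X_0'' = \emptyset$ shows that $X_0'$ is a disjoint union of $m_0$ points, so I take $Y_0 = X_0'$ and $g_0 = \mathrm{id}$.

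For the inductive step, suppose $g_i \colon Y_i \xrightarrow{\simeq} X_i'$ has been built. By part (2) of Proposition \ref{VFdecomp}, $X_{i+1}''$ deformation retracts onto $X_i'$, so the inclusion $X_i' \hookrightarrow X_{i+1}''$ is a homotopy equivalence; composing with $g_i$ yields a homotopy equivalence $h \colon Y_i \xrightarrow{\simeq} X_{i+1}''$. By part (1), $X_{i+1}'$ is obtained from $X_{i+1}''$ by attaching $m_{i+1}$ cells of dimension $i+1$ along some map $\phi \colon \coprod S^i \to X_{i+1}''$. Choosing a homotopy inverse $k$ of $h$ and a cellular approximation of $k \circ \phi$, I would define $Y_{i+1}$ to be $Y_i$ with $m_{i+1}$ cells attached along $k \circ \phi$. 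Two standard facts then give the required equivalence: first, attaching cells preserves homotopy type under a homotopy equivalence of the base, which is valid here because all inclusions of CW pairs are cofibrations, so $Y_{i+1} \simeq X_{i+1}'' \cup_{h \circ k \circ \phi} (\coprod D^{i+1})$; and second, $h \circ k \simeq \mathrm{id}_{X_{i+1}''}$ together with the invariance of cell attachment under homotopy of the attaching map identifies this space with $X_{i+1}'' \cup_{\phi} (\coprod D^{i+1}) = X_{i+1}'$. This produces $g_{i+1} \colon Y_{i+1} \xrightarrow{\simeq} X_{i+1}'$ with $Y_{i+1}$ carrying the prescribed number of cells in each dimension.

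Finally, I would assemble these into the global statement. Setting $Y = \bigcup_i Y_i$, the complex $Y$ has precisely $m_i$ cells in dimension $i$ for every $i$ by construction. Since the filtration of Proposition \ref{VFdecomp} exhausts $X$ up to homotopy and consists of cofibrations, and the $g_i$ are compatible with the inclusions $Y_i \hookrightarrow Y_{i+1}$ and $X_i' \hookrightarrow X_{i+1}'$, the induced map $Y = \varinjlim Y_i \to \varinjlim X_i' \simeq X$ is a homotopy equivalence. For the complexes relevant to this paper $X$ is finite-dimensional, so the induction terminates after finitely many steps and this last colimit point is automatic.

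The main obstacle, and the only place requiring genuine care, is the compatibility of the inductively chosen homotopy equivalences $g_i$ so that they pass to the colimit: the equivalences must be arranged to restrict correctly on the subcomplexes $Y_i \subseteq Y_{i+1}$, which is where the homotopy extension property is invoked to promote the \emph{free} homotopy equivalences produced at each inductive step to equivalences rel the lower skeleta. Everything else is a formal consequence of Proposition \ref{VFdecomp} together with the two cited invariance properties of cell attachment.
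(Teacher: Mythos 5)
Your proof is correct and follows essentially the same route the paper takes: the paper records this corollary as a direct consequence of Proposition \ref{VFdecomp}, citing \cite{Fo} and \cite{FS} rather than writing out the argument, and your skeleton-by-skeleton induction via the two standard cell-attachment lemmas (invariance under homotopy equivalence of the base and under homotopy of attaching maps, both resting on the cofibration property of CW pairs) is precisely the derivation given in those sources. Nothing is missing; for the finite cubical complexes relevant to this paper the colimit compatibility issue you flag is indeed vacuous, since the filtration terminates.
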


Just as is the case with traditional Morse theory, the decomposition of the space $X$ given by Corollary \ref{critdecomp} can be used to compute the homology groups of $X$. For simplicity, we will state the construction for cubical complexes, although the general case is similar.\\

\begin{definition}\label{morsediff}
Let $X$ be a cubical complex, and write $C_i(X)$ for the free abelian group of $i$-cells of $X$. If $c \cong I_1 \times \ldots \times I_i$ is an $i$-cell of $X$, with $I_j \cong [0,1]$, then we define
\[
c^j_{\tau} = I_1 \times \ldots \times I_{j-1} \times \{0\} \times I_{j+1} \times \ldots \times I_i, \hspace{.5cm} c^j_{\iota} = I_1 \times \ldots \times I_{j-1} \times \{1\} \times I_{j+1} \times \ldots \times I_i.
\]
This allows us to define a boundary morphism
\[
\partial: C_i(X) \rightarrow C_{i-1}(X)
\]
given by
\[
\partial(c) := \sum_j c^j_\tau - c^j_\iota,
\]
turning $C_\dt(X)$ into a chain complex. It is a well known fact that the homology of this chain complex is the usual homology of the space $X$.\\

Further assume that $X$ is equipped with a discrete gradient vector field $V$. Then we have a map $R:C_i(X) \rightarrow C_i(X)$ defined by
\[
R(c) = \begin{cases} 0 &\text{ if $c$ is collapsible}\\ c &\text{ if $c$ is critical}\\ \pm \partial(V_i(c)) + c &\text{ otherwise,}\end{cases}
\]
where the sign of $\partial(V_i(c))$ in the above definition is chosen so that $c$ has a negative coefficient. The property that $V$ has no non-trivial closed paths implies that $R^m(c) = R^{m+1}(c)$ for all $m \gg 0$ and all $i$-cells $c$ \cite{Fo}. We set $R^\infty(c)$ to be this stable value.\\

For each $i$, let $\M_i$ denote the free abelian group with basis indexed by the critical $i$-cells of $V$. Then the \textbf{Morse complex} associated to $V$ is defined to be
\[
\M_\dt : \ldots \rightarrow \M_n \rightarrow \ldots \rightarrow \M_1 \stackrel{\widetilde{\partial}}\rightarrow \M_0 \rightarrow 0,
\]
where boundary map $\widetilde{\partial}$ is given by
\[
\widetilde{\partial}(c) := R^\infty(\partial(c))
\]
The map $\widetilde{\partial}$ is known as the \textbf{Morse differential}.\\
\end{definition}

\begin{theorem}[Theorem 8.2 \cite{Fo}, Theorem 7.3 \cite{Fo2}]\label{morsediffcomp}
For all $i$ there are isomorphisms,
\[
H_i(X) \cong H_i(\M_\dt)
\]
\text{}\\
\end{theorem}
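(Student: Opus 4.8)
The plan is to exhibit a chain homotopy equivalence between the cellular chain complex $(C_\dt(X),\partial)$ and the Morse complex $(\M_\dt,\widetilde{\partial})$, so that the desired isomorphism on homology is immediate. The mechanism is the gradient flow encoded by $R$, interpreted as a sequence of algebraic Gaussian eliminations. The starting observation is that regularity (Definition \ref{regcw}) is exactly what is needed to run such eliminations: if $\sigma^{(i)}$ is a redundant cell paired with the collapsible cell $V_i(\sigma) = \tau^{(i+1)}$, then $\sigma$ is a regular face of $\tau$, so the incidence number $\langle \partial \tau, \sigma\rangle$ appearing in the cubical boundary of Definition \ref{morsediff} equals $\pm 1$, a unit in $\Z$. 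It is precisely this invertibility that allows the matched pair $(\sigma,\tau)$ to be cancelled.

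I would then recall the elementary Gaussian-elimination lemma for based chain complexes: if a free complex has basis elements $\sigma,\tau$ in adjacent degrees with $\langle\partial\tau,\sigma\rangle$ a unit, then one may delete $\sigma$ and $\tau$ and pass to a chain-homotopy-equivalent complex whose differential acquires correction terms accounting for the length-two detours through $\sigma$ and $\tau$. A single such elimination is exactly one application of the operator $R$ of Definition \ref{morsediff}: redundant cells are flowed across their paired collapsible cells (picking up the other faces of $V_i(\sigma)$, with the sign convention making the coefficient of $\sigma$ cancel), collapsible cells are deleted, and critical cells are untouched. Iterating over all matched pairs corresponds to passing to the stable operator $R^\infty$. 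The two facts to establish are that $R^\infty$ is a chain map, $\partial R^\infty = R^\infty \partial$, and that it is chain homotopic to the identity; both follow formally once the individual eliminations are legitimate, with the homotopy assembled from the pairing map $V$ summed along the flow.

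It remains to identify the resulting complex with $\M_\dt$. Since $R^\infty$ fixes each critical cell and annihilates every collapsible cell while pushing each redundant cell toward the critical cells along the flow, its stable image is the free $\Z$-module on the critical cells, i.e. $\M_\dt$, and the differential it inherits from $\partial$ is $c \mapsto R^\infty(\partial c) = \widetilde{\partial}(c)$, matching Definition \ref{morsediff} on the nose. The inclusion $\M_\dt \hookrightarrow C_\dt(X)$ and the retraction $R^\infty$ therefore realize $\M_\dt$ as a deformation retract of $C_\dt(X)$ in the category of chain complexes, whence $H_i(\M_\dt) \cong H_i(C_\dt(X))$; and the latter is $H_i(X)$ by the standard fact recalled in Definition \ref{morsediff}.

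The main obstacle is organizing the a priori order-dependent, possibly non-terminating cancellations into a single well-defined chain homotopy equivalence. Concretely, one must show that $R^m$ stabilizes, so that $R^\infty$ exists, and that the gradient-path sums defining it are finite. This is exactly the point at which the hypothesis that $V$ is a discrete gradient vector field — admitting no nontrivial closed cellular paths — is indispensable: acyclicity supplies the well-founded order that guarantees convergence and makes $R^\infty$ independent of the order of elimination. Once convergence is in hand, the chain-map property, the homotopy, and the identification of $\widetilde{\partial}$ are bookkeeping. Alternatively one could argue geometrically, attaching the $m_i$ critical $i$-cells as in Proposition \ref{VFdecomp} and Corollary \ref{critdecomp} and computing cellular boundary maps, but matching those attaching degrees with $R^\infty\partial$ is more delicate than the algebraic argument.
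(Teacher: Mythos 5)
The paper does not actually prove this statement: Theorem \ref{morsediffcomp} is imported wholesale from Forman (Theorem 8.2 of \cite{Fo}, Theorem 7.3 of \cite{Fo2}) and used as a black box, so there is no internal proof to compare yours against. Judged on its own, your sketch follows the now-standard ``algebraic Morse theory'' route (iterated Gaussian elimination of matched pairs, in the style of Kozlov and Sk\"oldberg), which is genuinely different from Forman's original argument: Forman instead introduces the discrete-time gradient flow $\Phi = 1 + \partial V + V\partial$, proves it stabilizes, identifies the complex of $\Phi$-invariant chains with $\M_\dt$, and shows the invariant subcomplex computes $H_*(X)$. Your route is more elementary, needing only the elimination lemma plus acyclicity; Forman's buys an explicit flow operator and the invariant-chain description. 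You also correctly isolate the two load-bearing points: regularity forces the incidence number $\langle \partial V_i(\sigma), \sigma\rangle$ to be a unit, and acyclicity of $V$ is what makes $R^m$ stabilize and renders the eliminations order-independent.

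Two assertions in your final paragraph are, however, false as written, though both are repairable. First, the identity ``$\partial R^\infty = R^\infty \partial$'' cannot hold: for $c$ critical the left-hand side is $\partial c$ while the right-hand side is $R^\infty(\partial c)$, and these differ whenever $\partial c$ involves non-critical cells. The correct chain-map statement for the projection $R^\infty:(C_\dt(X),\partial)\rightarrow(\M_\dt,\widetilde{\partial})$ is $\widetilde{\partial}\circ R^\infty = R^\infty\circ \partial$, which, since $\widetilde{\partial}=R^\infty\partial$ on critical cells, amounts to $R^\infty \partial R^\infty = R^\infty \partial$; this is a real identity requiring proof, not formal bookkeeping. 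Second, and for the same reason, the \emph{naive} inclusion $\M_\dt \hookrightarrow C_\dt(X)$ is not a chain map from $(\M_\dt,\widetilde{\partial})$ to $(C_\dt(X),\partial)$, so it cannot serve as one half of your deformation retraction. The inclusion must be corrected by the flow: a critical cell $c$ is sent to $c$ plus a signed sum of cells accumulated along $V$-paths emanating from $\partial c$; this corrected map is exactly what the composite of your individual elimination equivalences produces, but it is not the map you named. With those two repairs your outline is a correct proof of the cited theorem.
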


\subsection{The configuration spaces of graphs}\label{confg}

In this section we review necessary facts about configuration spaces of graphs. In the next section, we will explain how the techniques of discrete Morse theory apply to these spaces.\\

\begin{definition}
A \textbf{graph} is any compact, connected CW complex of dimension one. A \textbf{tree} is a topologically contractible graph. Given a graph $G$ with vertex $v$, we will write $\mu(v)$ to denote the degree of the vertex $v$. Note that any loop on $v$ contributes 2 to its degree count. We say that $v$ is \textbf{essential} if $\mu(v) \geq 3$. If $\mu(v) = 1$, we say that $v$ is a \textbf{boundary} vertex, and the unique edge connected to $v$ is called a \textbf{boundary edge}.\\

The \textbf{configuration space on $n$ points} of $G$ is the topological space,
\[
\Conf_n(G) := \{(x_1,\ldots,x_n) \in G^n \mid x_i \neq x_j \text{ if } i \neq j\}.
\]
We note that there is a natural fixed-point-free action on $\Conf_n(G)$ by the symmetric group $\Sn_n$. The \textbf{unordered configuration space on $n$ points} of $G$ is the quotient space,
\[
\UConf_n(G) := \Conf_n(G)/\Sn_n.
\]
\text{}\\
\end{definition}

For the majority of this paper, we will work with the spaces $\UConf_n(G)$. Note that many of the structural theorems discussed in this section will apply to both spaces.\\

In order to apply discrete Morse theory to questions about these configuration spaces, we will first need to place a CW complex structure on them. To accomplish this, we use a theorem of Abrams \cite{A}.\\

\begin{definition}
The \textbf{discretized configuration space on $n$ points} over $G$, $D_n(G)$, is the CW subcomplex of $G^n$ spanned by cells of the form
\[
\sigma_1 \times \ldots \times \sigma_n
\]
where $\sigma_i \subseteq G$ is a cell (i.e. an edge or vertex) of $G$, and $\overline{\sigma_i} \cap \overline{\sigma_j} = \emptyset$ whenever $i \neq j$. We write $UD_n(G)$ to denote the quotient of $D_n(G)$ by the action of the symmetric group.\\
\end{definition}

\begin{theorem}[\cite{A} Theorem 2.1, \cite{KKP} Theorem 2.4, \cite{PS}]\label{cellstructure}
Let $G$ be a graph, and assume that $G$ satisfies the following two properties:
\begin{enumerate}
\item each path connecting distinct vertices of degree $\neq 2$ has length at least $n-1$;
\item each homotopically essential path connecting a vertex to itself has length at least $n+1$.
\end{enumerate}
Then the inclusions $D_n(G) \hookrightarrow \Conf_n(G)$, and $UD_n(G) \hookrightarrow \UConf_n(G)$ are homotopy equivalences.\\
\end{theorem}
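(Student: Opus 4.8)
The plan is to produce an $\Sn_n$-equivariant strong deformation retraction of $\Conf_n(G)$ onto $D_n(G)$; the unordered statement will then follow formally. Since $\Sn_n$ is finite and acts freely (fixed-point-freely) on the Hausdorff spaces $\Conf_n(G)$ and $D_n(G)$, the quotient maps $\Conf_n(G) \to \UConf_n(G)$ and $D_n(G) \to UD_n(G)$ are covering projections. An $\Sn_n$-equivariant homotopy descends to the quotients, so it suffices to treat the ordered case equivariantly. I would therefore spend the bulk of the argument constructing a retraction $r\colon \Conf_n(G) \to D_n(G)$ together with a homotopy $H\colon \Conf_n(G) \times [0,1] \to \Conf_n(G)$ from the identity to $\iota \circ r$, where $\iota\colon D_n(G)\hookrightarrow \Conf_n(G)$ is the inclusion, with each map commuting with the coordinate-permuting action and fixing $D_n(G)$ pointwise.

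To build the homotopy I would regard $G$ as a metric graph in which every edge has length $1$, so that a configuration in $\Conf_n(G)$ is an ordered tuple of $n$ distinct points, each carrying a well-defined position along the cell it occupies. The defect of a configuration relative to $D_n(G)$ is that two or more points may lie in cells whose closures meet --- for instance two points in the same open edge, or one point at a vertex and another on an incident edge. The retraction proceeds by \emph{combing} the points apart: processing the cells of $G$ in a fixed order and flowing the offending points monotonically along the edges until each occupies its own cell with closures pairwise disjoint, which is exactly the condition defining $D_n(G)$. The hypotheses (1) and (2) enter precisely here: condition (1) guarantees that each path between vertices of degree $\neq 2$ is subdivided into enough $1$-cells that $n$ points crowded onto it can be spread into distinct, non-adjacent cells, and condition (2) provides the same room along every homotopically essential cycle so that the combing cannot wrap around and recreate a collision.

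The main obstacle, and the part that requires genuine care rather than soft topology, is to make this combing simultaneously (a) continuous and globally well defined, so that the order in which collisions are resolved introduces no discontinuity as points cross cell boundaries; (b) $\Sn_n$-equivariant, which forces the resolution rule to depend only on positions and not on the labels of the points; and (c) sufficient under the sharp bounds $n-1$ and $n+1$ rather than some larger amount of subdivision. The verification that the stated bounds suffice is the delicate combinatorial core: one must check that at each stage there is always an unoccupied adjacent cell into which a crowded point can be pushed, and that repeatedly doing so terminates in a configuration lying in $D_n(G)$. This is exactly the point at which Abrams' original, more generous, subdivision hypotheses were later sharpened, and I expect it to be the most technical portion of the argument.

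Finally, with $r$ and $H$ in hand I would confirm the formal properties $H_0 = \mathrm{id}$, $H_1 = \iota \circ r$, $r\circ \iota = \mathrm{id}_{D_n(G)}$, and $H_t|_{D_n(G)} = \mathrm{id}$ for all $t$. This exhibits $D_n(G)$ as a strong deformation retract of $\Conf_n(G)$, hence $\iota$ as a homotopy equivalence; descending the equivariant homotopy through the covering maps yields the homotopy equivalence $UD_n(G) \hookrightarrow \UConf_n(G)$, establishing both claims.
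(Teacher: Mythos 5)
This theorem is not proved in the paper at all: it is imported as background, with the proof deferred to Abrams \cite{A}, Kim--Ko--Park \cite{KKP}, and Prue--Scrimshaw \cite{PS}, and the remark following the statement only records the history of the length bounds. So the comparison here is really between your outline and the proofs in those sources. Your high-level strategy --- build an $\Sn_n$-equivariant strong deformation retraction of $\Conf_n(G)$ onto $D_n(G)$ by combing points apart along edges, then descend through the free quotient --- is exactly Abrams's original approach, and your reduction of the unordered statement to the ordered equivariant one is correct and formal.

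However, as a proof the proposal has a genuine gap, and it is the one you yourself flag: you never construct the combing homotopy, never verify that it is continuous, equivariant, and terminating, and never check that the bounds $n-1$ and $n+1$ suffice. That verification is not a technical detail to be deferred; it is the entire content of the theorem --- everything surrounding it in your write-up is soft topology. Worse, the combing argument in the form you describe is known \emph{not} to give the stated bounds: Abrams's deformation retraction requires the stronger hypothesis that paths between vertices of degree $\neq 2$ have length at least $n+1$, and the sharpening to $n-1$ is precisely the contribution of \cite{KKP} (a separate refinement argument) and of \cite{PS}, where the method is genuinely different --- Prue and Scrimshaw build a discrete gradient vector field in the sense of Section \ref{dmorsethy} whose critical cells all lie in $D_n(G)$, rather than flowing points continuously. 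So one cannot simply assert that a carefully designed combing will achieve the sharp constants; with the tools you have set up, the honest conclusion of your argument is the theorem with $n+1$ in hypothesis (1), and closing the gap to $n-1$ requires either reproducing the Kim--Ko--Park refinement or switching to the discrete Morse approach.
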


\begin{remark}
Note that in the first cited source, Abrams states the theorem assuming that each path connecting distinct vertices of degree $\neq 2$ has length at least $n+1$. It is noted after the proof that the version of the theorem stated above is true, and a brief argument is given for how it is proven. In the second source, Kim, Ko, and Park give a formal argument for this improvement. In the third source, Prue and Scrimshaw provide a discrete Morse theory argument, which is independent of the first two sources. For our purposes, the exact number of vertices needed is unimportant, as we can always just subdivide the edges of $G$ more if needed.\\
\end{remark}

Note that subdividing edges of a graph $G$ does not impact the configuration spaces $\Conf_n(G)$ and $\UConf_n(G)$. For much of what follows, we will often just assume, without explicit mention, $G$ is subdivided enough so that the homotopy equivalence of Theorem \ref{cellstructure} holds.\\

Theorem \ref{cellstructure} implies that $\Conf_n(G)$ and $\UConf_n(G)$ are homotopy equivalent to cubical complexes of dimension $n$. In fact, we will be able to do better than this.\\

\begin{theorem}[\cite{Gh} Theorem 3.3]\label{dimconf}
Let $G$ be a graph which is not homeomorphic to $S^1$. Then $\Conf_n(G)$ and $\UConf_n(G)$ are homotopy equivalent to CW complexes of dimension $N_G$, where $N_G$ is the number of essential vertices of $G$.\\
\end{theorem}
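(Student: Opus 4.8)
The plan is to reduce everything to the discretized model and then bound the dimension of the critical cells of a suitable discrete gradient vector field. First I would invoke Theorem \ref{cellstructure}: after subdividing $G$ enough that Abrams' hypotheses hold, the inclusions $D_n(G) \hookrightarrow \Conf_n(G)$ and $UD_n(G) \hookrightarrow \UConf_n(G)$ are homotopy equivalences, so it suffices to show that $UD_n(G)$ (respectively $D_n(G)$) is homotopy equivalent to a CW complex of dimension $N_G$. Since subdivision changes neither the homotopy type of the configuration spaces nor the number $N_G$ of essential vertices, this reduction costs nothing.

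Next I would equip $UD_n(G)$ with a discrete gradient vector field $V$, namely the Farley--Sabalka field built from a fixed rooted spanning tree and an ordering of the edges of $G$ — the construction to be developed in Section \ref{dmorse}. The point is then purely combinatorial. A cell of $UD_n(G)$ is recorded by a set of vertices and edges of $G$ with pairwise disjoint closures, and its dimension is exactly the number of edges it contains. I would then appeal to the characterization of the critical cells of $V$: a critical $i$-cell consists of $i$ ``disrespectful'' edges together with blocked vertices, and a disrespectful edge is forced to emanate from an essential vertex of $G$ (its initial vertex, nearest the root, has degree $\geq 3$). Because the edges of a single cell have pairwise disjoint closures, no two of them can be incident to the same vertex, so the $i$ essential vertices pinned to the $i$ edges are distinct. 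Hence $i \leq N_G$; that is, $V$ admits no critical cell of dimension exceeding $N_G$.

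With this bound established, Corollary \ref{critdecomp} immediately settles the unordered case: $UD_n(G)$ is homotopy equivalent to a CW complex all of whose cells have dimension $\leq N_G$, i.e.\ a complex of dimension $N_G$. For the ordered space I would observe that $D_n(G) \to UD_n(G)$ is a covering map and that $V$ lifts to an $\Sn_n$-equivariant discrete gradient vector field on $D_n(G)$; a cell upstairs is critical precisely when its image is, so the lifted critical cells again have dimension $\leq N_G$, and Corollary \ref{critdecomp} applied to $D_n(G)$ gives the statement for $\Conf_n(G)$. The hypothesis $G \not\cong S^1$ enters exactly here: when $N_G = 0$ the bound would force contractibility, which fails for the circle (its configuration space retracts onto a circle), so one needs at least one vertex of degree $\neq 2$ for the discretized model and the critical-cell bookkeeping to behave.

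The main obstacle is the second paragraph: making rigorous the assertion that every edge of a critical cell is attached to a distinct essential vertex. This rests on the full combinatorial description of the Farley--Sabalka critical cells — in particular the precise notions of disrespectful edges and blocked vertices and the lemma identifying the initial vertex of a disrespectful edge as essential — which is exactly the machinery I would need to set up carefully in Section \ref{dmorse} before the dimension bound is justified. Everything else is a formal consequence of the discrete Morse theory recalled in Section \ref{dmorsethy}.
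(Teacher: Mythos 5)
Your overall route --- discretize via Theorem \ref{cellstructure}, bound the dimension of the Farley--Sabalka critical cells, and conclude with Corollary \ref{critdecomp} --- is exactly the proof this paper has in mind (see the remark following condition (\ref{treecondition}); Ghrist's own proof was by different, more classical means). But your key combinatorial step has a genuine gap: you assert that every non-order-respecting (``disrespectful'') edge of a critical cell must emanate from an essential vertex because ``its initial vertex, nearest the root, has degree $\geq 3$.'' That is only true for edges of the spanning tree $T$. Edges of $G - T$, the deleted edges, are never order-respecting \emph{by definition}, regardless of where they sit in the graph, so they can appear in critical cells even when neither of their endpoints is essential. Lemma \ref{critlemma} is explicit about this dichotomy: an edge of a critical cell is either a deleted edge or a tree edge whose terminal vertex $\tau(e)$ is essential. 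Since the theorem concerns an arbitrary graph not homeomorphic to $S^1$ --- not just a tree --- your argument as written only bounds the number of tree edges in a critical cell; with a badly chosen spanning tree the critical cells genuinely can have dimension exceeding $N_G$ (up to $N_G$ plus the number of deleted edges), so the claimed bound does not follow.

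The missing ingredient is the choice of spanning tree: one must choose $T$ so that every deleted edge is adjacent to an essential vertex of $G$, i.e.\ condition (\ref{treecondition}), whose existence for an arbitrary graph is itself a nontrivial fact proven by Farley and Sabalka (in the proof of their Theorem 4.4), and one must arrange the subdivisions needed for Theorem \ref{cellstructure} so as to preserve this property (the paper does this by subdividing only edges of $T$). With that choice, each edge of a critical cell can be assigned an essential vertex lying in its closure --- a tree edge gets its essential tail $\tau(e)$, a deleted edge gets an adjacent essential vertex --- and this assignment is injective by your (correct) observation that the closures of a cell's edges are pairwise disjoint. That yields $i \leq N_G$, and the rest of your argument, including the equivariant lift to $D_n(G)$ for the ordered case, goes through. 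For trees your proof is complete as written, since there are no deleted edges; for general graphs the spanning-tree condition is where the real content lies.
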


\begin{remark}
If $G$ is homeomorphic to $S^1$, then $\UConf_n(G)$ is easily seen to be homotopy equivalent to a circle for all $n$. Throughout this work, and the literature in general, it is a recurring theme that certain theorems only apply to graphs which are neither $S^1$ nor the interval. It is interesting to observe that these two graphs are precisely those which are homeomorphic to compact manifolds.\\
\end{remark}

Note that Ghrist originally proved Theorem \ref{dimconf} using more classical topological means. We will later see it naturally falls out of the discrete Morse structure that Farley and Sabalka placed on $UD_n(G)$. This was first noted by Farley and Sabalka in \cite{FS}.\\

One remarkable thing to note about Ghrist's theorem is that the dimension of the configuration spaces of graphs is independent of $n$. This behavior is in stark contrast to the behavior of configuration spaces of smooth manifolds of dimension $\geq 2$.\\

In his paper \cite{Gh}, Ghrist also proves that configuration spaces of graphs are, in fact, aspherical. This result was later reproven by Abrams \cite[Theorem 3.10]{A}, where he shows that both $D_n(G)$ and $UD_n(G)$ are universally covered by CAT(0) complexes.\\

\begin{theorem}[\cite{Gh} Theorem 3.1, \cite{A} Corollary 3.11]\label{asp}
Let $G$ be a graph. Then $\Conf_n(G)$, and hence $\UConf_n(G)$, are aspherical. That is to say, $\pi_k(\Conf_n(G)) = 0$ for $k > 1$.\\
\end{theorem}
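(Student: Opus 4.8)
The plan is to prove asphericity by exhibiting a contractible universal cover, following the $\mathrm{CAT}(0)$ approach of Abrams. First I would dispose of the word ``hence'' in the statement. Since $\Sn_n$ acts freely on $\Conf_n(G)$, the quotient map $\Conf_n(G) \to \UConf_n(G)$ is a covering map, and covering maps induce isomorphisms on all higher homotopy groups; thus $\pi_k(\Conf_n(G)) \cong \pi_k(\UConf_n(G))$ for every $k \geq 2$, and it suffices to prove that just one of the two spaces is aspherical.

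Next I would pass from the topological configuration space to its combinatorial model. After subdividing $G$ as much as needed (which changes neither the homeomorphism type of $G$ nor that of its configuration spaces), Theorem \ref{cellstructure} provides a homotopy equivalence $UD_n(G) \hookrightarrow \UConf_n(G)$. As asphericity is an invariant of homotopy type, it is enough to show that the cubical complex $UD_n(G)$ is aspherical, and since for a connected CW complex this is equivalent to contractibility of the universal cover, the goal becomes: show that the universal cover of $UD_n(G)$ is contractible.

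The key step is to verify that $UD_n(G)$ is a nonpositively curved (locally $\mathrm{CAT}(0)$) cube complex. By Gromov's link condition, a cube complex is locally $\mathrm{CAT}(0)$ precisely when the link of each vertex is a flag simplicial complex. A vertex of $UD_n(G)$ is an unordered placement of $n$ tokens on distinct vertices of $G$; the vertices of its link are the elementary moves that slide a single token along an incident edge, and a collection of such moves spans a simplex exactly when the moves can be performed simultaneously, that is, when they slide distinct tokens along edges of $G$ whose closures are pairwise disjoint. Because a move that is individually valid already has its edge disjoint from the positions of all stationary tokens, the only obstruction to performing a set of moves jointly is the \emph{pairwise} disjointness of the sliding edges. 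Hence any set of link-vertices that is pairwise adjacent spans a simplex, which is exactly the assertion that the link is flag; the same argument applies verbatim to $D_n(G)$. It is here that the subdivision hypotheses of Theorem \ref{cellstructure} are used, ensuring that the cubes embed and that the local combinatorics at each vertex are genuinely described by this simplicial link.

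Finally I would invoke the Cartan--Hadamard theorem for metric spaces: a complete, connected, locally $\mathrm{CAT}(0)$ cube complex has a universal cover that is globally $\mathrm{CAT}(0)$. Every $\mathrm{CAT}(0)$ space is contractible via geodesic contraction to a basepoint, so the universal cover of $UD_n(G)$ is contractible and $UD_n(G)$ is aspherical; combined with the reductions above, this gives asphericity of both $\UConf_n(G)$ and $\Conf_n(G)$. I expect the main obstacle to be the careful verification of the link condition: one must set up the vertex links as honest simplicial complexes and confirm that their simplices are determined exactly by pairwise adjacency, which is the content that genuinely relies on $G$ being sufficiently subdivided.
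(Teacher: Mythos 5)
Your proposal is correct and takes essentially the same route as the proof the paper relies on: the paper gives no argument of its own but cites Abrams, who proves exactly what you outline, namely that (after sufficient subdivision, via Theorem \ref{cellstructure}) the discretized complexes $D_n(G)$ and $UD_n(G)$ are nonpositively curved cube complexes, hence universally covered by contractible $\mathrm{CAT}(0)$ spaces, with the Gromov flag-link condition and Cartan--Hadamard doing the work. Your covering-space reduction between $\Conf_n(G)$ and $\UConf_n(G)$ and the verification that pairwise-compatible token moves span a cube are the same ingredients as in that cited argument.
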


Note that this theorem is analogous to that which says the configuration spaces of the plane are aspherical. In that case, the fundamental groups of the ordered and unordered spaces are the Artin pure braid groups, and the Artin braid groups, respectively. We borrow this terminology for our context as well.\\

\begin{definition}
Let $G$ be a graph. The \textbf{braid group on $n$ strands} of $G$ is defined to be
\[
B_nG := \pi_1(\UConf_n(G)).
\]
We similarly define the \textbf{pure braid group on $n$ strands} as
\[
P_nG := \pi_1(\Conf_n(G))
\]
\text{}\\
\end{definition}

The study of the braid groups of graphs is still a very active area of research. See \cite{KKP}\cite{KP}\cite{FS2} for more on these groups.\\

As an immediate corollary to Theorem \ref{asp}, we obtain the following.\\

\begin{corollary}
Let $G$ be a graph. then there are isomorphisms
\[
H_i(\Conf_n(G)) \cong H_i(P_nG), H_i(\UConf_n(G)) \cong H_i(B_nG).
\]
\text{}\\
\end{corollary}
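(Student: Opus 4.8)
The final statement to prove is the corollary asserting that $H_i(\Conf_n(G)) \cong H_i(P_nG)$ and $H_i(\UConf_n(G)) \cong H_i(B_nG)$, presented as an immediate consequence of Theorem~\ref{asp}. My plan is to invoke the standard fact that the homology of an aspherical space coincides with the group homology of its fundamental group, and to make explicit why Theorem~\ref{asp} supplies exactly the hypotheses needed for this.

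First I would recall that a connected CW complex $X$ is aspherical precisely when $\pi_k(X) = 0$ for all $k > 1$, which is exactly what Theorem~\ref{asp} provides for $\Conf_n(G)$ and $\UConf_n(G)$. For such a space, the universal cover $\widetilde{X}$ is weakly contractible, and since $\widetilde{X}$ is itself a CW complex, Whitehead's theorem upgrades this to genuine contractibility. Consequently $X$ is a model for the classifying space $K(\pi_1(X),1)$. The remaining ingredient is the foundational result that the singular homology of a $K(\pi,1)$ agrees with the group homology $H_i(\pi)$; this identification is canonical and standard (see, e.g., Hatcher or Brown's \emph{Cohomology of Groups}).

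Assembling these pieces, I would argue as follows. By Theorem~\ref{asp}, the space $\Conf_n(G)$ is aspherical, so it is a $K(P_nG, 1)$ by the very definition $P_nG := \pi_1(\Conf_n(G))$. Therefore $H_i(\Conf_n(G)) \cong H_i(P_nG)$ for every $i$, where the right-hand side denotes group homology. The identical argument with $\UConf_n(G)$ and $B_nG := \pi_1(\UConf_n(G))$ in place of their ordered counterparts yields $H_i(\UConf_n(G)) \cong H_i(B_nG)$. One small point worth noting is that the spaces here are only asserted to be homotopy equivalent to finite CW complexes (via Theorems~\ref{cellstructure} and~\ref{dimconf}), but asphericity and homology are both homotopy invariants, so there is no obstruction in passing between a space and its CW model.

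I do not anticipate a genuine obstacle here, since the result is flagged in the text as an immediate corollary and the real content lies entirely in Theorem~\ref{asp}. The only thing requiring a modicum of care is the bookkeeping of which fundamental group is which: the ordered space $\Conf_n(G)$ carries the pure braid group $P_nG$ and the unordered space $\UConf_n(G)$ carries the full braid group $B_nG$, a distinction that mirrors the classical Artin braid group picture alluded to just before the statement. Beyond that, the proof is a one-line appeal to the equivalence between the homology of an aspherical space and the homology of its fundamental group.
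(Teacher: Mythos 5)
Your proposal is correct and matches the paper's intent exactly: the paper states this as an immediate consequence of Theorem \ref{asp}, with the implicit argument being precisely the standard one you spell out (asphericity makes each space a $K(\pi,1)$ for its fundamental group, so singular homology agrees with group homology). Your expanded justification via contractibility of the universal cover and Whitehead's theorem is just the fleshed-out version of what the paper leaves unstated.
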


The goal for this paper is to establish a methodology for understanding stability phenomena in the groups $H_i(B_nG)$, in the spirit of modern trends of asymptotic algebra. Note that we will spend very little time considering pure braid groups. In fact, there doesn't seem to be much literature about the homology of these groups, as they are vastly more complicated than the braid groups \cite{KP}\cite{BF}.\\

The following theorem of Farley exactly computes the homology groups when $G$ is a tree.\\

\begin{theorem}[\cite{Fa}]\label{treefree}
Let $G$ be a tree. Then the group $H_i(B_nG)$ is free for all $n \geq 0$.
\end{theorem}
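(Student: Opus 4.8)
The plan is to reduce the computation of $H_i(B_nG)$ to the homology of the Morse complex attached to the Farley--Sabalka gradient vector field on $UD_n(G)$, and then to use the tree structure to show that this complex has vanishing differentials. By Theorem~\ref{asp} we have $H_i(B_nG) \cong H_i(\UConf_n(G))$, and after subdividing $G$ enough that Theorem~\ref{cellstructure} applies, $H_i(\UConf_n(G)) \cong H_i(UD_n(G))$. The discrete gradient vector field of Farley and Sabalka then produces, via Theorem~\ref{morsediffcomp}, a Morse complex $\M_\dt$ with $H_i(UD_n(G)) \cong H_i(\M_\dt)$, in which each $\M_i$ is the \emph{free} abelian group on the critical $i$-cells. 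Thus the entire question is whether the homology of $\M_\dt$ is still free.

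Since a subgroup of a finitely generated free abelian group is again free, $\ker \widetilde\partial_i$ is automatically free, and the only possible source of torsion is the quotient by $\operatorname{im}\widetilde\partial_{i+1}$. The cleanest way to eliminate this is to prove the strong statement that, when $G$ is a tree, the Morse differential $\widetilde\partial$ is identically zero. Granting this, $H_i(\M_\dt) \cong \M_i$ is free of rank equal to the number of critical $i$-cells, which gives the theorem; it is also exactly the triviality of $\widetilde\partial$ invoked elsewhere to make $\bigoplus_n H_i(\UConf_n(G))$ into an $S_G$-module.

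The main obstacle is therefore to establish that $\widetilde\partial = 0$ for trees, and this is where the combinatorics of the Farley--Sabalka labelling must be used in earnest. After rooting the tree at a leaf and ordering its vertices by a depth-first search, a critical $i$-cell is characterized by the conditions that each of its $i$ occupied edges is order-disrespecting and each of its occupied vertices is blocked. To compute $\widetilde\partial$ on such a cell I would take its cubical boundary from Definition~\ref{morsediff}, which replaces each disrespecting edge $e$ by the signed difference of the two cells obtained by placing a point at the initial and terminal endpoints of $e$, and then apply the reduction $R^\infty$ to flow each resulting face along gradient paths to a combination of critical cells. The crux is to show that every critical $(i-1)$-cell occurs with total coefficient zero: one argues that each face obtained this way fails to be critical --- the relevant endpoint becomes unblocked, or the disrespecting condition is destroyed --- so that its gradient flow either terminates at a non-critical cell or cancels against the flow of a partner face. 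The acyclicity of $G$ is essential here, since it guarantees that gradient paths terminate and that the blocked/unblocked bookkeeping along a path is governed purely by the linear DFS order with no interference from cycles; this is precisely what forces the pairwise cancellation and hence $\widetilde\partial = 0$. Carrying out this cancellation uniformly over all critical cells, with correct signs, is the technical heart of the argument and the step I expect to be most delicate.
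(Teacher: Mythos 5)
Your proposal follows exactly the route the paper itself takes: the paper does not prove this statement internally, but cites Farley \cite{Fa} for precisely the fact you isolate as the crux --- that the Farley--Sabalka Morse differential vanishes identically when $G$ is a tree (restated in the paper as Theorem~\ref{treefree2}) --- after the same chain of identifications $H_i(B_nG)\cong H_i(\UConf_n(G))\cong H_i(UD_n(G))\cong H_i(\M_\dt)$ with each $\M_i$ free abelian on critical cells. Be aware, though, that the pairwise-cancellation argument you sketch but explicitly defer (showing the two faces coming from each edge of a critical cell flow under $R^\infty$ to the same critical cell with opposite signs) is the entire technical content of Farley's cited theorem, so your writeup is an outline of that proof rather than a self-contained replacement for it.
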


Note that the theorem of \cite{Fa} is slightly more general than this. We will discuss the more general version in later sections.\\

Expanding upon the work of Farley, the following theorem of Ko and Park suggests that the tree case is indicative of a more general phenomenon.\\

\begin{theorem}[\cite{KP} Theorem 3.6]\label{kkpthm}
Let $G$ be a graph. Then $G$ is planar if and only if $H_1(B_nG)$ is torsion free for any, and therefore all, $n\geq 2$. Moreover, in the case where $G$ is not planar, all torsion is 2-torsion.\\
\end{theorem}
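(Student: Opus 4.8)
The plan is to compute $H_1(B_nG) \cong H_1(\UConf_n(G))$ directly from the Farley--Sabalka Morse complex on $UD_n(G)$, and to reduce the question of torsion to linear algebra over $\Z$. Since $H_1$ is the homology of the three-term complex $\M_2 \xrightarrow{\widetilde{\partial}} \M_1 \xrightarrow{\widetilde{\partial}} \M_0$, its torsion subgroup is governed by the invariant factors of the Morse differential $\widetilde{\partial}\colon \M_2 \to \M_1$ restricted to $\ker(\M_1 \to \M_0)$. Concretely, I would first recall the classification of critical $1$- and $2$-cells for $UD_n(G)$ from Section \ref{dmorse} and compute the matrix of $\widetilde{\partial}$ in terms of the local combinatorics at the essential vertices of $G$. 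The key structural fact to establish is that every entry of this matrix lies in $\{0,\pm 1, \pm 2\}$, and that the entries equal to $\pm 2$ occur exactly at the critical $2$-cells recording two strands exchanged around an essential vertex in two non-isotopic local configurations. Granting this, no invariant factor can be divisible by any prime other than $2$, which already forces all torsion in $H_1(B_nG)$ to be $2$-torsion.

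For the implication that planarity yields torsion-freeness, I would use a fixed planar embedding of $G$ to choose, at each essential vertex $v$, a cyclic ordering of the edges incident to $v$. This ordering orients the half-edges consistently and lets me select signs and a preferred basis of $\M_2$ in which the rows of $\widetilde{\partial}$ supported near a single vertex become unimodular: the two local moves around $v$ can be carried out compatibly with the planar cyclic order, so the would-be coefficient $2$ collapses to $\pm 1$. I would then argue that, after this basis change, $\widetilde{\partial}$ admits a block/triangular form in which every nonzero invariant factor equals $1$, so that $\coker(\widetilde{\partial})$, and hence $H_1(B_nG)$, is free. The heart of this direction is the compatibility of the local orientations across all vertices, which is exactly what a global planar embedding provides.

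For the converse I would argue by contraposition using Kuratowski's theorem: a nonplanar $G$ contains a subdivision of $K_5$ or $K_{3,3}$. For $n=2$ I would exhibit, inside such a subdivision, an explicit $1$-cycle $\gamma \in \ker(\M_1 \to \M_0)$ together with a $2$-chain $\beta$ satisfying $\widetilde{\partial}(\beta) = 2\gamma$ while $\gamma$ itself is not in the image of $\widetilde{\partial}$. The obstruction to writing $\gamma = \widetilde{\partial}(\beta')$ is precisely that the two strands cannot be slid past one another in a planar fashion near the crossing forced by $K_5$ or $K_{3,3}$, producing a class of order exactly $2$ in $H_1(B_2G)$. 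To upgrade from $n=2$ to all $n \geq 2$, and to see that the phenomenon is independent of $n$, I would invoke a stabilization map $UD_n(G) \to UD_{n+1}(G)$ adding a strand on a fixed boundary edge, and check that it sends the order-$2$ class to a nonzero order-$2$ class, so that torsion, once present, persists; conversely the local-detection nature of the construction shows torsion cannot appear for planar $G$ at any level.

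The main obstacle I anticipate is the explicit, sign-correct computation of $\widetilde{\partial}\colon \M_2 \to \M_1$, and in particular proving the dichotomy that its entries are confined to $\{0,\pm 1,\pm 2\}$ with the value $\pm 2$ appearing if and only if the local configuration is non-planar. Controlling these signs uniformly across all essential vertices --- and showing that a consistent global choice exists exactly when $G$ is planar --- is the crux of the argument; the Kuratowski reduction and the stabilization-in-$n$ bookkeeping are comparatively routine once the local model is understood.
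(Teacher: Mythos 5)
Be aware that the paper you are being compared against contains no proof of this statement at all: it is quoted from Ko--Park \cite[Theorem 3.6]{KP}, where it is a corollary of something much stronger, namely an explicit computation of $H_1(B_nG)$ for every $n\geq 2$ in terms of combinatorial invariants of $G$ (the $n=2$ planarity criterion goes back to Kim--Ko--Park \cite[Theorem 5.5]{KKP}). Judged on its own, your sketch has a fatal logical gap at its center. Your ``key structural fact'' --- that the entries of $\widetilde{\partial}\colon \M_2\to\M_1$ lie in $\{0,\pm1,\pm2\}$ --- would not imply what you claim even if it were established: torsion is governed by invariant factors, not by entry sizes, and a matrix such as $\left(\begin{smallmatrix}1&2\\2&1\end{smallmatrix}\right)$ has entries in $\{1,2\}$ yet Smith normal form $\mathrm{diag}(1,3)$, i.e.\ 3-torsion in its cokernel. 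So the step ``granting this, no invariant factor can be divisible by any prime other than 2'' is simply false. Moreover, the proposed dichotomy that $\pm 2$ entries occur exactly at non-planar \emph{local} configurations around an essential vertex cannot be correct in principle: planarity is not detectable at a single vertex (the neighborhood of any vertex is a star, which is planar, and every vertex link of $K_5$ also occurs in planar graphs). In Ko--Park's computation the 2-torsion arises from global interactions involving the deleted edges, i.e.\ the cycles of $G$; consequently your planar-embedding/unimodularity argument for the forward direction has no local mechanism to exploit.

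The nonplanar direction has two additional unproved transfer steps. First, given a Kuratowski subgraph $K'\subseteq G$, the inclusion $UD_2(K')\hookrightarrow UD_2(G)$ need not induce an injection on $H_1$, so an order-2 class built inside the $K_5$ or $K_{3,3}$ subdivision does not automatically survive in $H_1(B_2G)$; this survival is precisely the hard point, and you do not address it. Second, the stabilization map $UD_n(G)\to UD_{n+1}(G)$ you invoke requires both a construction (a nonplanar graph need not have a boundary edge on which to park a strand, and the existence of topological stabilization maps realizing the algebraic stabilization is raised as an open problem in Question 4 of this very paper) and a proof that it carries your torsion class to a nonzero class. Ko--Park avoid all of these transfer problems by computing $H_1(B_nG)$ outright for every $n$ from a reduced Morse complex; that is also what yields the ``for any, and therefore all, $n\geq 2$'' clause and the constancy of the 2-torsion multiplicity in $n$, neither of which your sketch recovers.
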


Kim, Ko, and Park proved that $G$ is planar if and only if $H_1(B_2G,\Z)$ is torsion free in \cite[Theorem 5.5]{KKP}. It is also conjectured in that work that Theorem \ref{kkpthm} is true. It should be noted that \cite[Theorem 3.6]{KP} is far stronger than what we have written above. In fact, their result explicitly computes the groups $H_1(B_nG,\Z)$ in terms of combinatorial invariants of the graph $G$. It follows from their computation that the amount 2-torsion of the group is unvarying in $n$. Moreover, if $G$ is biconnected - that is, $G$ requires the removal of at least 2 vertices to disconnect it - then $H_1(B_nG,\Z) \cong H_1(B_2G,\Z)$ for all $n \geq 2$ \cite[Lemma 3.12]{KP}. In the final section of this paper we will conjecture an extension of this fact to the higher homologies (see Conjecture \ref{mainconj}).\\

To finish this section, we record a result of Gal on the Euler characteristic of these spaces. Note that Gal proves a more general theorem for computing the Euler characteristic of the configuration space of any simplicial complex.\\

\begin{theorem}[\cite{Ga} Theorem 2]\label{eulercharacteristic}
Let $G$ be a graph, and set
\[
\mathfrak{e}(t) := \sum_{n \geq 0} \frac{\chi(\Conf_n(G))}{n!}t^n.
\]
Then,
\[
\mathfrak{e}(t) = \frac{\prod_{v} (1-(1-\mu(v))t)}{(1-t)^E}
\]
where $E$ is the number of edges of $G$.\\
\end{theorem}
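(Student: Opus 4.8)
The plan is to reduce the computation of $\mathfrak{e}(t)$ to a purely combinatorial count on a finely subdivided model of $G$, and then to extract the closed form by a transfer-matrix computation as the subdivision is refined. The first step is to pass from ordered to unordered configuration spaces. Since $\Conf_n(G) \to \UConf_n(G)$ is a free $\Sn_n$-cover and the Euler characteristic is multiplicative along finite covers, we have $\chi(\Conf_n(G)) = n!\,\chi(\UConf_n(G))$, so that
\[
\mathfrak{e}(t) = \sum_{n \geq 0} \chi(\UConf_n(G))\, t^n.
\]
For each fixed $n$, subdividing $G$ enough that Theorem \ref{cellstructure} applies gives $\chi(\UConf_n(G)) = \chi(UD_n(G))$, and the latter is the alternating sum of the cell counts of the finite cubical complex $UD_n(G)$.

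Next I would reinterpret these cell counts. A cell of $UD_n(G)$ is an unordered set $S$ of $n$ cells of $G$ with pairwise disjoint closures, and its dimension is the number $e(S)$ of edges in $S$; writing $v(S)$ for the number of vertices in $S$, it contributes $(-1)^{e(S)}$ to $\chi(UD_n(G))$. Summing over all $n$ organizes these into the single generating function $\sum_{S}(-t)^{e(S)}\,t^{v(S)}$, over all finite sets $S$ of cells with pairwise disjoint closures. The key combinatorial observation is that such an $S$ is exactly the data of a matching $F$ of $G$ together with an arbitrary set $W$ of vertices avoiding the endpoints of $F$: distinct vertices never have intersecting closures, whereas an edge conflicts precisely with its two endpoints and with the edges sharing them. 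Hence the generating function equals
\[
\sum_{F \text{ matching}} (-t)^{|F|} (1+t)^{|V| - 2|F|},
\]
a weighted matching generating function of the subdivided graph. For a fixed subdivision this agrees with $\mathfrak{e}(t)$ only in the degrees where the discretized model is valid, so the honest identity is recovered in the coefficientwise limit of ever-finer subdivision.

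The final and most delicate step is to evaluate this limit in closed form by a transfer matrix. Writing $f(L)$ for the weighted matching generating function of a path of $L$ edges, conditioning on the last edge yields the recursion $f(L) = (1+t)f(L-1) - t\,f(L-2)$, whose characteristic roots are $1$ and $t$; solving gives $f(L) = \frac{1 - t^{L+2}}{1-t} \to \frac{1}{1-t}$, with the discarded term $-t^{L+2}/(1-t)$ supported exactly in the degrees where the model is invalid. Thus each of the $E$ essential edges contributes a factor $\frac{1}{1-t}$, producing the denominator $(1-t)^E$. At an essential vertex $v$ of degree $\mu(v)$ the branch imposes the constraint that at most one incident edge may be matched; conditioning on whether $v$ is occupied, empty, or matched into one of its $\mu(v)$ arms, and passing to the limit in each arm, yields the local numerator factor recorded in the statement. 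Multiplying the edge and vertex contributions gives
\[
\mathfrak{e}(t) = \frac{\prod_v \bigl(1 - (1-\mu(v))t\bigr)}{(1-t)^E}.
\]

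The main obstacle is this last step. One must justify that the coefficientwise limit of the matching generating functions over successive subdivisions exists and equals $\mathfrak{e}(t)$, controlling the error terms by the range of validity of Theorem \ref{cellstructure}, and one must compute the local factor at a vertex of degree $\mu(v)$ correctly, where the single-matched-edge constraint couples the $\mu(v)$ arms together. Loops and multiple edges require only that the transfer-matrix bookkeeping be carried out around cycles rather than along paths, which does not affect the resulting factorization.
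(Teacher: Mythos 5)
First, a point of comparison: the paper contains no proof of this statement at all --- it is quoted, with attribution, from Gal \cite{Ga} and used as a black box --- so your proposal is a from-scratch argument and must be judged on its own. Its architecture is sound and is essentially the right elementary route: the covering argument $\chi(\Conf_n(G)) = n!\,\chi(\UConf_n(G))$, the passage to $UD_n(G)$, the identification of $\sum_n \chi(UD_n(G))\,t^n$ with the matching generating function $\sum_F (-t)^{|F|}(1+t)^{|V|-2|F|}$, and the path computation $f(L) = \frac{1-t^{L+2}}{1-t} \to \frac{1}{1-t}$ are all correct, and the coefficientwise subdivision limit that you flag as the main obstacle can be made rigorous exactly as you indicate.

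The genuine gap is the single step you assert instead of compute, and it is fatal to the proposal as written. Carry out your own conditioning at an essential vertex $v$: ``empty or occupied'' contributes $(1+t)$ times the product of the arm limits, while ``matched into one of the $\mu(v)$ arms'' contributes $\mu(v)$ terms, each carrying a factor $-t$ relative to an unmatched arm (the matched first edge contributes $-t$ and deletes the arm's first interior vertex, exactly as in your path recursion). The local factor is therefore $(1+t)-\mu(v)t \;=\; 1+(1-\mu(v))t$, which is \emph{not} the factor $1-(1-\mu(v))t$ ``recorded in the statement'': the linear term has the opposite sign. So your method, executed honestly, proves $\mathfrak{e}(t) = \prod_v\bigl(1+(1-\mu(v))t\bigr)/(1-t)^E$ --- the correct identity, and presumably what \cite{Ga} actually states --- and simultaneously refutes the statement as printed, which contains a transcription error. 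That the printed version is false is easy to check: for the $3$-star $Y$ (one vertex of degree $3$, three edges) it reads $(1+2t)/(1-t)^3 = 1+5t+12t^2+\cdots$, predicting $\chi(\Conf_1(Y))=5$ and $\chi(\Conf_2(Y))=24$, whereas $Y$ is contractible and $\chi(\Conf_2(Y))=0$ (the discrete model $D_2(Y)$ has $12$ vertices, $12$ edges, and no $2$-cells); moreover the printed formula is not subdivision-invariant (a new degree-$2$ vertex multiplies it by $(1+t)/(1-t)$), although $\Conf_n(G)$ depends only on the homeomorphism type of $G$. With the corrected sign, a degree-$2$ vertex contributes $(1-t)/(1-t)=1$, which is also the only thing that legitimizes your silent replacement of the statement's ``all vertices, all $E$ edges'' by ``essential vertices, essential edges.'' In short: right strategy, but the unexecuted vertex computation is precisely where the claimed identity fails; doing it would have revealed that the statement needs its sign corrected (a correction that is harmless for the paper, since Corollary \ref{eulerpolynomial} uses only the shape of the rational function, not the signs in its numerator).
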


\begin{remark}
$\Conf_n(G)$ is an $n!$-fold cover of $\UConf_n(G)$. It follows from this that the above formula can be easily used to compute the Euler characteristic of $\UConf_n(G)$ as well.\\
\end{remark}

\begin{definition}
Let $G$ be a graph which has at least one essential vertex, and let $\mathcal{E}$ denote the set of essential vertices of $G$. Then an \textbf{essential edge} of $G$ is a connected component of $G - \mathcal{E}$.\\
\end{definition}

\begin{corollary}\label{eulerpolynomial}
Let $G$ be a graph with at least one essential vertex, and let $E$ denote the number of essential edges of $G$. Then there is a polynomial $P$ of degree $E-1$, for all $n \geq 0$, such that the function
\[
n \mapsto \chi(\UConf_n(G))
\]
is equal $P(n)$.
\end{corollary}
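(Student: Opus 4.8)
The plan is to extract the growth of the Euler characteristic directly from Gal's generating function (Theorem \ref{eulercharacteristic}), and to show that after passing to the unordered spaces the relevant generating function is a rational function whose denominator controls the polynomiality.

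\textbf{Step 1: Pass from ordered to unordered.} Since $\Conf_n(G)$ is an $n!$-fold cover of $\UConf_n(G)$, we have $\chi(\UConf_n(G)) = \chi(\Conf_n(G))/n!$. Hence the \emph{ordinary} generating function for the unordered Euler characteristics coincides with Gal's exponential generating function:
\[
\sum_{n \geq 0} \chi(\UConf_n(G))\, t^n = \sum_{n \geq 0} \frac{\chi(\Conf_n(G))}{n!}\, t^n = \mathfrak{e}(t) = \frac{\prod_{v} (1-(1-\mu(v))t)}{(1-t)^E},
\]
where here $E$ denotes the number of \emph{all} edges of $G$ in Gal's formula. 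This reduces the entire problem to analyzing the rational function $\mathfrak{e}(t)$.

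\textbf{Step 2: Cancel against the denominator and identify the pole at $t=1$.} It is a standard fact from the theory of generating functions that if $f(t) = Q(t)/(1-t)^d$ with $Q$ a polynomial and $Q(1) \neq 0$, then the coefficient of $t^n$ in $f$ agrees with a polynomial in $n$ of degree exactly $d-1$ for all $n \geq 0$ (this is just the binomial expansion of $(1-t)^{-d}$ together with finitely many shifts from $Q$). So I would first cancel any common factors of $(1-t)$ between numerator and denominator: a vertex $v$ of degree $2$ contributes a factor $(1-(1-2)t) = (1+t)$, a boundary vertex ($\mu(v)=1$) contributes $(1-0\cdot t)=1$, and crucially a vertex with $\mu(v)=2$ does \emph{not} give a $(1-t)$ factor, whereas the only way to produce a factor of $(1-t)$ in the numerator is... none of them do directly, so the order of the pole at $t=1$ is governed entirely by $E$ minus the vanishing order of $\prod_v(1-(1-\mu(v))t)$ at $t=1$. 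The numerator evaluated at $t=1$ is $\prod_v(1-(1-\mu(v))) = \prod_v \mu(v) \neq 0$, so the numerator does \emph{not} vanish at $t=1$. Therefore the pole of $\mathfrak{e}(t)$ at $t=1$ has order exactly $E$ (the total edge count), and the Betti-number-style argument of Step 1 shows the coefficients agree with a polynomial of degree $E-1$ — but this is the total edge count, not the essential edge count. The remaining task is to reconcile this with the claimed degree.

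\textbf{Step 3: Reduce to essential edges via subdivision-invariance.} The key observation is that $\UConf_n(G)$ is unchanged (up to homotopy) under subdivision of edges, so $\chi(\UConf_n(G))$ is a topological invariant insensitive to degree-$2$ vertices. Thus I may assume $G$ has \emph{no} degree-$2$ vertices, in which case every non-essential, non-boundary vertex has been eliminated and the edges of $G$ are in bijection with a refinement of the essential edges. More carefully, the honest invariant is obtained after maximally un-subdividing: in Gal's formula the degree-$2$ vertices each contribute the removable factor $(1+t)$ to the numerator while adding one edge to $E$, so each degree-$2$ vertex raises both the numerator degree and the pole order by the same bookkeeping, and these cancel in the net count of the pole order. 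After reducing to the case with no degree-$2$ vertices, the number of edges meeting the combinatorial definition used in Corollary \ref{eulerpolynomial} is exactly the number $E$ of essential edges (a connected component of $G - \mathcal{E}$, once all degree-$2$ vertices are removed, is a single edge), and the pole order at $t=1$ becomes exactly this essential edge count. Hence the coefficient $\chi(\UConf_n(G))$ agrees with a polynomial of degree $E-1$ in $n$ for all $n \geq 0$, where $E$ is the number of essential edges, as claimed.

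\textbf{The main obstacle} I anticipate is the careful bookkeeping in Step 3: matching Gal's edge-count $E$ (all edges) against the corollary's $E$ (essential edges) requires tracking precisely how subdivision interacts with the generating function, and verifying that the pole order at $t=1$ truly drops to the essential edge count rather than the total edge count. The cleanest route is to first reduce to a graph with no degree-$2$ vertices using homotopy-invariance of $\UConf_n$, and only then apply the rational-function degree count, so that Gal's $E$ and the essential edge count coincide on the nose.
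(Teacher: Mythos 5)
Your overall route is the same as the paper's: combine Gal's formula with the $n!$-cover relation $\chi(\UConf_n(G)) = \chi(\Conf_n(G))/n!$, smooth away degree-$2$ vertices, and read off coefficient growth of a rational function with denominator a power of $(1-t)$. But there is a genuine gap, and it sits exactly at the crux of the statement. The ``standard fact'' in your Step 2 is false as stated: the hypothesis $Q(1)\neq 0$ alone gives agreement with a polynomial of degree exactly $d-1$ only for $n \gg 0$, not for all $n \geq 0$. For example, $t^2/(1-t)$ has coefficient sequence $0,0,1,1,\ldots$, which fails to agree with the constant polynomial $1$ at $n=0,1$, even though $Q(1)=1\neq 0$. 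Agreement from $n=0$ onward requires the additional hypothesis $\deg Q < d$, and since ``for all $n \geq 0$'' is precisely the nontrivial content of the corollary (eventual polynomiality is immediate), your argument as written does not prove it. The paper closes exactly this gap: after smoothing, the numerator has degree $N_G$ (one linear factor per essential vertex, since boundary vertices contribute constants), and one must check $N_G < E$. This inequality does hold, by a handshake count: every essential vertex has degree at least $3$, so $2E \geq \sum_{v\ \mathrm{essential}} \mu(v) \geq 3N_G$, whence $E > N_G$. You never verify this, and without it your conclusion does not follow.

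A second, smaller problem: your Step 2/Step 3 reconciliation is internally inconsistent. Taking the formula as printed in the paper, a degree-$2$ vertex contributes $(1+t)$ to the numerator, and a factor of $(1+t)$ cannot cancel a pole at $t=1$; so your Step 2 concludes the pole order is the \emph{total} edge count, while your Step 3 concludes it is the \emph{essential} edge count --- yet the series $\sum_n \chi(\UConf_n(G))\, t^n$ is literally unchanged by subdivision, since subdividing does not change the underlying topological space. The phrase ``these cancel in the net count of the pole order'' is where this is papered over; no such cancellation occurs. What this contradiction actually exposes is a sign typo in the paper's statement of Gal's theorem: the factors should be $1-(\mu(v)-1)t$, so that degree-$2$ vertices contribute $(1-t)$ factors that genuinely cancel against the denominator. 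The clean way to sidestep all of this --- and what the paper does --- is to smooth the degree-$2$ vertices \emph{first}, before ever writing down the generating function, so that the total and essential edge counts coincide and the bookkeeping question never arises.
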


\begin{proof}
We first note that smoothing the degree 2 vertices of $G$ does not impact the spaces $\UConf_n(G)$. We may therefore assume without loss of generality that $G$ does not have any such vertices. In this case, the Euler characteristic $\chi(\UConf_n(G))$ is the coefficient of $t^n$ in the power series expansion of
\[
\frac{\prod_{v \text{ essential}} (1-(1-\mu(v)t))}{(1-t)^{E}}
\]
A straight forward enumerative combinatorics argument implies that the $n$-th coefficient of the power series expansion of this expression is a polynomial of degree exactly $E-1$ for $n$ sufficiently large. It remains to show that this agreement begins at $n = 0$.\\

Assume that $G$ has $N_G$ essential vertices. It is a standard fact from enumerative combinatorics that the coefficients of the power series expansion of a rational function of the form $\frac{f(x)}{(1-t)^l}$ will agree with a polynomial for all $n$ so long as $\deg(f) < l$. This is the case in our specific instance, so long as $N_G > 1$.
\end{proof}

Theorem \ref{eulercharacteristic} was largely the inspiration for this work. It suggests that the asymptotically the Betti numbers of $H_i(B_nG)$ are polynomial. In this work we will prove this suggestion in the case where $G$ is a tree. We will also provide a setup in the case where $G$ is a general graph, which hopefully will be able to illustrate this behavior in that case as well.\\

\subsection{The discrete Morse theory of $\UConf_n(G)$} \label{dmorse}

In this section we will outline the discrete Morse structure on $\UConf_n(G)$ developed in the work of Farley and Sabalka \cite{FS}. We begin by fixing $n$, and assuming the graph $G$ is sufficiently subdivided for Theorem \ref{cellstructure} to hold for $\UConf_n(G)$.\\

Fix a spanning tree $T$ for $G$, as well as an embedding of $T$ into the plane. We will label the vertices of $T$ by applying a depth-first search. Concretely, begin by choosing a boundary vertex $v_0$ to be the root of $T$, and label it with the number 0. Continue down the boundary edge adjacent to $v_0$, labeling any vertices encountered along the way with increasing labels. If at any point an essential vertex is encountered, then travel down the leftmost - relative to the current direction of travel - edge whose vertices have not been labeled. If a boundary vertex is encountered, then one returns to the most recently passed essential vertex. An example of a correctly labeled tree is given in Figure \ref{labeledtree}.\\

\begin{figure}
\begin{tikzpicture}
    \tikzstyle{every node}=[draw,circle,fill=white,minimum size=4pt,
                            inner sep=0pt]
    \draw (0,0) node (0) [label=above:$0$] {}
		      --(.5,0) node (1) [label=above:$1$]{}
					--(1,0) node (2) [label=right:$2$]{}
					--(1,.5) node (3) [label=left:$3$]{}
					--(1,1) node (4) [label=left:$4$]{}
					--(1,1.5) node (5) [label=left:$5$]{}
					--(1,2) node (6) [label=left:$6$]{}
    			(4) -- (1.5,1) node (7) [label=above:$7$]{}
					--(2,1) node (8) [label=above:$8$]{}
					(2) -- (1,-.5) node (9) [label=left:$9$]{}
					--(1,-1) node (10) [label=left: $10$]{};

\end{tikzpicture}
\caption{A tree which is properly labeled. Note that this tree is also sufficiently subdivided to apply Theorem \ref{cellstructure} with $n = 3$.}\label{labeledtree}
\end{figure}
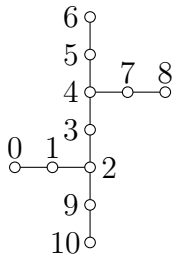

For the remainder of this section, $G$ and $T$ will be as in the previous paragraphs. As before we will write $\K$ to denote the set of cells of $\UConf_n(G)$, while $\K_i$ will denote the set of $i$-cells of $\UConf_n(G)$.\\

\begin{definition}\label{dgvf}
Given an edge $e$ of $G$, we write $\iota(e)$ to denote its largest, with respect to the given labeling, vertex, and $\tau(e)$ to denote its smallest vertex. If $v$ is a vertex of $G$, which is not $v_0$, then we write $e(v)$ to denote the unique edge of $T$ for which $\iota(e(v)) = v$. We note that $e(v)$ will be the first edge on the unique path within $T$ from $v$ to $v_0$.\\

Let $c = \{v_{j_1},\ldots, v_{j_{n-i}}, e_{r_1},\ldots, e_{r_i}\}$ be an $i$-cell of $UD_n(G)$. If
\[
\tau(e(v_j)) \cap \{v_{j_1},\ldots, v_{j_{n-i}}, \iota(e_{r_1}),\tau(e_{r_1}),\ldots, \iota(e_{r_i}),\tau(e_{r_i})\} \neq \emptyset,
\]
then we say that $v_j$ is \textbf{blocked} in $c$. Note that by convention $v_0$ is always blocked.\\

We define a partial function $V_i: \K_i \rightarrow \K_{i+1}$ inductively in the following way. Given an $i$-cell $c = \{v_{j_1},\ldots, v_{j_{n-i}}, e_{r_1},\ldots, e_{r_i}\}$, assume without loss of generality that $v_{j_1}$ is an unblocked vertex of $c$ which is smallest with respect to the labeling of $T$. Assuming that $c$ is not in the image of $V_{i-1}$, we set
\[
V_i(\{v_{j_1},\ldots, v_{j_{n-i}}, e_{r_1},\ldots, e_{r_i}\}) := \{v_{j_2},\ldots, v_{j_{n-i}}, e_{r_1},\ldots, e_{r_i}, e(v_{j_1}))\}
\]
If $c$ does not have any unblocked vertices, or if $c$ is in the image of $V_{i-1}$, then $V_i(c)$ is undefined.\\
\end{definition}

\begin{theorem}[\cite{FS} Sections 3.1 and 3.2]
The collection of partial functions $V_i:\K_i \rightarrow \K_{i+1}$ form a discrete gradient vector field on $UD_n(G)$.\\
\end{theorem}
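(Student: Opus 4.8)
The plan is to verify in turn the three axioms defining a discrete vector field, and then the acyclicity condition that no nontrivial closed cellular path exists. Axiom (2), that the image of $V_i$ is disjoint from the domain of $V_{i+1}$, is immediate: the inductive clause in Definition \ref{dgvf} declares $V_{i+1}(c)$ undefined whenever $c$ lies in the image of $V_i$, so this is built into the construction. For axiom (3), the face condition, note that when $v_{j_1}$ is the smallest unblocked vertex of $c$, the defining property of ``unblocked'' says precisely that $\tau(e(v_{j_1}))$ is disjoint from the closures of all other tokens of $c$; hence $V_i(c) = \{v_{j_2},\ldots,e(v_{j_1})\}$ really is a legitimate cell of $UD_n(G)$, and collapsing its edge-factor $e(v_{j_1}) = [\tau(e(v_{j_1})),v_{j_1}]$ to the endpoint $\iota(e(v_{j_1})) = v_{j_1}$ recovers $c$, exhibiting $c$ as a face of $V_i(c)$.

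The substantive discrete--vector--field axiom is injectivity (1), and I would prove it by giving an intrinsic recipe that recovers $c$ from $d := V_i(c)$. The point is to characterize, purely in terms of $d$, which edge of $d$ was the one just added. I claim the added edge is the unique tree edge $g \subseteq d$ with the following property: writing $c_g := d - g + \iota(g)$ for the cell obtained by collapsing $g$ to its top endpoint, the vertex $\iota(g)$ is the smallest unblocked vertex of $c_g$ and $c_g \notin \mathrm{im}(V_{i-1})$. Existence is clear since $g = e(v_{j_1})$ works. For uniqueness one supposes two such edges $g, g'$ with, say, $\iota(g) < \iota(g')$, and derives a contradiction with the minimality clause by tracking the status of $\iota(g)$ inside $c_{g'}$; this is the one place requiring a careful case analysis of how collapsing one edge affects the blocked/unblocked status of the vertices governed by the other.

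Acyclicity is where the real work lies, and I expect it to be the main obstacle. The strategy is to produce a potential function on cells, built from the depth--first labeling, that strictly decreases along every nontrivial gradient step. The source of descent is structural: since $\iota(e(v)) = v$ is the larger endpoint, we always have $\tau(e(v)) < v$, so the generating move slides the chosen token from $v$ strictly toward the root $v_0$. The difficulty is that a cellular path $\alpha_{j+1} \subseteq V_i(\alpha_j)$ may be obtained by collapsing an edge of $V_i(\alpha_j)$ other than the newly created one, and such a step can simultaneously move one token down and another token up. A naive sum of token labels therefore need not decrease, and one is forced to use a lexicographic comparison of the token positions that prioritizes the lowest-labeled movable token. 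The heart of the argument is to show that for every face of $V_i(\alpha_j)$ that again lies in the domain of $V_i$, this lexicographic weight strictly drops; finiteness of $\K_i$ then rules out closed paths and upgrades $V$ to a discrete gradient vector field.
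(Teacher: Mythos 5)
You should first note that the paper itself contains no proof of this statement: it is imported verbatim from Farley--Sabalka \cite{FS}, so the benchmark is their Sections 3.1--3.2. Your verification of axioms (2) and (3) is correct (disjointness of image and domain is built into Definition \ref{dgvf}, and unblockedness of $v_{j_1}$ is exactly what makes $V_i(c)$ a legitimate cell of $UD_n(G)$ having $c$ as a face). But the two parts you yourself identify as the real content --- uniqueness in the injectivity recipe, and the descent argument for acyclicity --- are deferred rather than proved, and the notion that makes both tractable in \cite{FS}, that of an \emph{order respecting} edge (quoted in this paper as the Classification Theorem), never appears in your proposal. In \cite{FS}, injectivity is clean because the inverse of $V_i$ on a collapsible cell is canonical: one collapses the \emph{minimal order respecting edge} $e^*$ onto $\iota(e^*)$; without that notion, the ``careful case analysis'' you postpone is the entire difficulty.

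More seriously, the acyclicity mechanism you propose fails on a concrete, legal path step, in either natural reading of ``token positions.'' In the tree of Figure \ref{labeledtree} with $n=3$, take $\alpha_j = \{v_3, v_5, e_{2,9}\}$. Here $v_3$ is blocked (the vertex $v_2$ lies in $\overline{e_{2,9}}$), $v_5$ is the unique unblocked vertex, and $e_{2,9}$ is not order respecting (witness $v_3$), so $\alpha_j$ is redundant and $V_1(\alpha_j) = \{v_3, e_{2,9}, e_{4,5}\}$. Collapsing $e_{2,9}$ onto $v_9$ gives the face $\alpha_{j+1} = \{v_3, v_9, e_{4,5}\}$, a nontrivial step after which the path can continue, since $\alpha_{j+1}$ is again redundant ($v_3$ is now unblocked). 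In this step the token occupying $e_{2,9}$ moves \emph{up} from the vicinity of $v_2$ to $v_9$, while the token at $v_5$ moves \emph{down} to $e_{4,5}$: if edge-tokens are positioned at their lower endpoints, the lowest-labeled moving token is the one that moves up, so any lexicographic comparison read from the bottom strictly \emph{increases}; if they are positioned at their upper endpoints, the sorted position multiset is $(3,5,9)$ both before and after, so no strict decrease is possible. Thus your potential function breaks precisely at the step type you flagged as dangerous. Ruling out closed paths despite such steps is the actual substance of \cite[Section 3.2]{FS}, where the evolution of blocked vertices and order respecting edges along a path is tracked over several pages; as written, your proposal restates that problem rather than solving it.
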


While the spaces $\UConf_n(G)$ can be high dimensional, we can still visualize their cells. Indeed, it is often useful to think of the cells of $\UConf_n(G)$ as being subsets of edges and vertices of the graph $G$ itself. In Figure \ref{cellexample} we see examples of a critical, a collapsible, and a redundant 1-cell of V, where $G$ is taken to be the tree in Figure \ref{labeledtree} and $n = 3$. In these examples, the cell $\{v_0,v_1,e_{9,10}\}$ is collapsible, as
\[
\{v_0,v_1,e_{9,10}\} = V_0(\{v_0,v_1,v_{10}\}),
\]
and $v_{10}$ is the smallest unblocked vertex. The cell $\{v_1,v_5,e_{4,7}\}$ is redundant, as $v_1$ is not blocked, and it is not in the image of $V_0$. Indeed, the only cell that could possibly map to it would be $\{v_1,v_5,v_7\}$. However, in this case $v_1$ is the smallest unblocked vertex. Finally, the cell $\{v_0,v_5,e_{4,7}\}$ is critical, as its every vertex is blocked, and it is not collapsible.\\

\begin{figure}
\begin{tikzpicture}
\tikzstyle{every node}=[draw,circle,fill=white,minimum size=4pt,
                            inner sep=0pt]
    \draw (0,0) node (0) [fill = black, label=above:$0$] {}
		      --(.5,0) node (1) [label=above:$1$]{}
					--(1,0) node (2) [label=right:$2$]{}
					--(1,.5) node (3) [label=left:$3$]{}
					--(1,1) node (4) [fill = black, label=left:$4$]{}
					--(1,1.5) node (5) [fill = black, label=left:$5$]{}
					--(1,2) node (6) [label=left:$6$]{}
    			(4) -- (1.5,1) node (7) [fill = black, label=above:$7$]{}
					--(2,1) node (8) [label=above:$8$]{}
					(2) -- (1,-.5) node (9) [label=left:$9$]{}
					--(1,-1) node (10) [label=left: $10$]{};
		 \draw [line width=1mm] (4) -- (7);
\end{tikzpicture}
\begin{tikzpicture}
\tikzstyle{every node}=[draw,circle,fill=white,minimum size=4pt,
                            inner sep=0pt]
    \draw (0,0) node (0) [fill = black, label=above:$0$] {}
		      --(.5,0) node (1) [fill = black, label=above:$1$]{}
					--(1,0) node (2) [label=right:$2$]{}
					--(1,.5) node (3) [label=left:$3$]{}
					--(1,1) node (4) [label=left:$4$]{}
					--(1,1.5) node (5) [label=left:$5$]{}
					--(1,2) node (6) [label=left:$6$]{}
    			(4) -- (1.5,1) node (7) [label=above:$7$]{}
					--(2,1) node (8) [label=above:$8$]{}
					(2) -- (1,-.5) node (9) [fill = black, label=left:$9$]{}
					--(1,-1) node (10) [fill = black, label=left: $10$]{};
		 \draw [line width=1mm] (9) -- (10);
\end{tikzpicture}
\begin{tikzpicture}
\tikzstyle{every node}=[draw,circle,fill=white,minimum size=4pt,
                            inner sep=0pt]
    \draw (0,0) node (0) [label=above:$0$] {}
		      --(.5,0) node (1) [fill = black, label=above:$1$]{}
					--(1,0) node (2) [label=right:$2$]{}
					--(1,.5) node (3) [label=left:$3$]{}
					--(1,1) node (4) [fill = black, label=left:$4$]{}
					--(1,1.5) node (5) [fill = black, label=left:$5$]{}
					--(1,2) node (6) [label=left:$6$]{}
    			(4) -- (1.5,1) node (7) [fill = black, label=above:$7$]{}
					--(2,1) node (8) [label=above:$8$]{}
					(2) -- (1,-.5) node (9) [label=left:$9$]{}
					--(1,-1) node (10) [label=left: $10$]{};
		 \draw [line width=1mm] (4) -- (7);
\end{tikzpicture}
\caption{The critical 1-cell $\{v_0,v_5,e_{4,7}\}$, the collapsible 1-cell $\{v_0,v_1,e_{9,10}\}$, and the redundant 1-cell $\{v_1,v_5,e_{4,7}\}$, respectively.}\label{cellexample}
\end{figure}
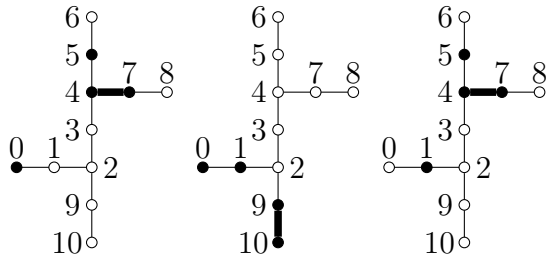

From these examples, one can imagine there existing a way to classify all cells of the three types. To write down such a classification, we first need a bit of nomenclature.\\

\begin{definition}
Let $c = \{v_{j_1},\ldots, v_{j_{n-i}}, e_{r_1},\ldots, e_{r_i}\}$ be an $i$-cell of $UD_n(G)$. We say that an edge $e \in c$ is \textbf{order respecting in $c$} if $e$ is in $T$, and for all $v \in c$ such that $\tau(e(v)) = \tau(e)$, the label of $v$ in $T$ is larger than the label of $\iota(e)$.\\
\end{definition}

\begin{remark}
Note that edges $e \in G-T$ - which we call \textbf{deleted edges} -  are, by definition, never order respecting.\\
\end{remark}

\begin{theorem}[The Classification Theorem, \cite{FS} Theorem 3.6]
Let $c$ be a cell of $UD_n(G)$, and let $T$ be a choice of spanning tree of $G$ equipped with a planar embedding. Then:
\begin{enumerate}
\item $c$ is critical if and only if it contains no order respecting edges and all of its vertices are blocked;
\item $c$ is redundant if and only if
\begin{enumerate}
\item it contains no order respecting edges and at least one of its vertices is unblocked or,
\item it contains an order respecting edge (and thus a minimal order respecting edge e) and there is some unblocked vertex $v$ such that the label of $v$ in $T$ is less than that of $\iota(e)$;
\end{enumerate}
\item $c$ is collapsible if and only if it contains an order respecting edge (and thus a minimal order respecting edge $e$) and, for any vertex $v$ such that the label of $v$ in $T$ is less than that of $\iota(e)$, $v$ is blocked.\\
\end{enumerate}
\end{theorem}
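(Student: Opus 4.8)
The plan is to prove the classification by matching two partitions of the cell set of $UD_n(G)$: the combinatorial trichotomy given by the three conditions, and the dynamical trichotomy into critical, redundant, and collapsible cells (which partition $\K$ because $V$ is a discrete gradient vector field). First I would check the purely combinatorial fact that conditions (1), (2), (3) are mutually exclusive and exhaustive: splitting on whether $c$ contains an order respecting edge, and, when it does, comparing the smallest unblocked vertex to $\iota(e)$ for the minimal order respecting edge $e$, one sees immediately that every cell satisfies exactly one condition. Once this is known, it suffices to prove a single containment for each class — that every combinatorially critical (resp. redundant, collapsible) cell is dynamically critical (resp. redundant, collapsible) — since if three subsets contained in the three blocks of a partition themselves partition the same set, the containments must be equalities.

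Everything then reduces to two statements: that the collapsible cells (the image of $V$) are exactly those satisfying condition (3), and that the redundant cells (the domain of $V$) are exactly those satisfying condition (2). Because $V_i$ is defined on $c$ only when $c$ is not already in the image of $V_{i-1}$, I would prove both characterizations simultaneously by induction on the dimension $i$. The base case $i=0$ is immediate: a $0$-cell has no edges, hence no order respecting edges, so no $0$-cell is collapsible, and a $0$-cell is redundant exactly when it has an unblocked vertex. For the forward direction of the image characterization, suppose $c = V_{i-1}(c')$, where $v$ is the smallest unblocked vertex of $c'$, so that $c = (c' \setminus \{v\}) \cup \{e(v)\}$. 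Disjointness of cell closures forces $\tau(e(v))$ to be unoccupied in $c'$, and combined with $v$ being the smallest unblocked vertex of $c'$ this shows that $e(v)$ is order respecting in $c$ and that every vertex below $\iota(e(v)) = v$ is blocked; hence $c$ satisfies condition (3).

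The hard part will be the reverse direction. Given an $i$-cell $c$ satisfying condition (3) with minimal order respecting edge $e$, I must exhibit a genuine $V$-preimage, and the natural candidate is $c' := (c \setminus \{e\}) \cup \{\iota(e)\}$. Using disjointness of closures together with the defining inequality of an order respecting edge, one checks that $\iota(e)$ is unblocked in $c'$ and is in fact its smallest unblocked vertex. The real obstacle is verifying that $c'$ lies in the \emph{domain} of $V_{i-1}$, i.e. that $c'$ is not itself collapsible — this is precisely where the recursive clause bites. Here I would exploit the key monotonicity observation that passing from $c$ to $c'$ only adds the single vertex $\iota(e)$, and that adding a vertex can only destroy the order respecting property, since it enlarges the set of vertices over which the defining inequality is quantified. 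Thus every order respecting edge of $c'$ is also one of $c$ and, being different from $e$, has strictly larger $\iota$-value by minimality of $e$. Consequently either $c'$ has no order respecting edge at all, or $\iota(e)$ is an unblocked vertex lying below the minimal order respecting edge of $c'$; in either case $c'$ satisfies condition (2), so by the inductive hypothesis $c'$ is redundant. Therefore $V_{i-1}(c') = c$, which shows $c$ is collapsible.

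With both characterizations in hand, the remaining two classes fall out by bookkeeping. A cell is redundant exactly when it has an unblocked vertex and is not collapsible; by the partition lemma and the image characterization this is exactly condition (2), once one notes that condition (1) cells have every vertex blocked. The critical cells, being those that are neither redundant nor collapsible, are then exactly the cells satisfying condition (1). The only genuinely delicate point throughout is the interplay between the recursive definition of $V$ and the order respecting condition, and the induction on dimension combined with the monotonicity of order respecting edges under adding a vertex is designed precisely to untangle it.
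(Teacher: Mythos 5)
The paper never proves this statement: it is imported verbatim from Farley--Sabalka (cited as [FS, Theorem 3.6]) and used as a black box, so there is no internal proof to compare yours against; your proposal has to be judged on its own merits, and on those merits it is correct. The global logic is sound: conditions (1)--(3) visibly partition the cells of $UD_n(G)$, the dynamical classes partition them as well (domain, image, and complement of a discrete vector field), so one-sided containments suffice; and your induction has the right dependency structure --- the identification of the image of $V_{i-1}$ with condition (3) in dimension $i$ invokes the identification of the domain of $V_{i-1}$ with condition (2) in dimension $i-1$ (exactly to certify that the candidate preimage $c' = (c\setminus\{e\})\cup\{\iota(e)\}$ lies in the domain despite the recursive clause ``$c'$ not in the image of $V_{i-2}$''), and the domain characterization in dimension $i$ then follows from the image characterization plus bookkeeping. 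The two pivotal checks are also right: $\iota(e)$ is the smallest unblocked vertex of $c'$ (a smaller vertex of $c$ could only lose its blocking by losing $e$, i.e.\ through $\tau(e)$ or $\iota(e)$; the first case contradicts $e$ being order respecting, the second persists because the vertex $\iota(e)$ is added back), and every order respecting edge of $c'$ is an order respecting edge of $c$ other than $e$, hence has $\iota$-value strictly above $\iota(e)$ by minimality, which is precisely what condition (2) for $c'$ requires. Two small points to make explicit in a write-up: first, in the forward direction you verify that $e(v)$ is order respecting in $c$ and that all vertices below $v=\iota(e(v))$ are blocked, but condition (3) quantifies over vertices below the \emph{minimal} order respecting edge $e^*$ of $c$, which need not equal $e(v)$; this is harmless since $\iota(e^*)\leq \iota(e(v))$ in the labelling, so the required blocking is a sub-case of what you proved, but the sentence should be said. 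Second, you should fix the ordering on edges (by the label of $\iota(e)$) so that ``minimal order respecting edge'' and your comparisons against it are unambiguous; the paper leaves this implicit.
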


It is not hard to check that the previous examples satisfy the conditions of the Classification Theorem.\\

One useful fact about the discrete gradient vector field $V$ is that its critical cells are, in some sense, the most restrictive of the three possible types. Indeed, we may expand upon the classification of critical cells in the following way.\\

\begin{lemma}\label{critlemma}
Let $c = \{v_{j_1},\ldots, v_{j_{n-i}}, e_{r_1},\ldots, e_{r_i}\}$ be a critical $i$-cell of $V$.
\begin{enumerate}
\item For each $k$ in $\{1,\ldots,i\}$, either $e_{r_k}$ is a deleted edge, or $\tau(e_{r_k})$ is an essential vertex of $T$.
\item Let $c'$ be a critical $i$-cell with edges $e_{r_1},\ldots,e_{r_i}$, such that the number of its vertices in each component of $T - \{\overline{e_{r_1}},\ldots,\overline{e_{r_i}}\}$ agrees with the number of vertices of $c$ in these same components. Then, $c = c'$.
\end{enumerate}
\end{lemma}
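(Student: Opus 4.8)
The plan is to dispatch the two parts separately, with the Classification Theorem doing the heavy lifting in each.

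\textbf{Part (1).} Suppose $e_{r_k}$ is not a deleted edge. Then $e_{r_k}\in T$, so $e_{r_k}=e(u)$ where $u:=\iota(e_{r_k})$, and $w:=\tau(e_{r_k})$ is the parent of $u$ in the rooted tree $T$. Since $c$ is critical, the Classification Theorem says it contains no order respecting edges; in particular $e_{r_k}$ is not order respecting. Reading the definition, this cannot hold vacuously: were there no $v\in c$ with $\tau(e(v))=w$ at all, the edge $e_{r_k}$ would be order respecting. Hence there is a vertex $v\in c$ with $\tau(e(v))=w$ and label no larger than that of $u$. Such a $v$ is a child of $w$, and $v\neq u$ because $v$ is a $0$-cell of $c$ whereas $u$ is an endpoint of the edge $e_{r_k}$ (and the closures of the factors of a cell of $UD_n(G)$ are disjoint). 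Thus $w$ has the two distinct children $u,v$. As $v_0$ is a boundary vertex it has only one child, so $w\neq v_0$, and $w$ therefore also carries the edge $e(w)$ to its own parent; hence $\mu(w)\geq 3$, i.e. $w=\tau(e_{r_k})$ is essential.

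\textbf{Part (2).} The strategy is to show that, once the edges $e_{r_1},\dots,e_{r_i}$ are fixed, criticality determines the locations of the remaining $n-i$ vertices from nothing more than how many of them sit in each piece, so that matching counts forces $c=c'$. The basic local input is blocking: every vertex $v$ of a critical cell has its parent $\tau(e(v))$ occupied, either by another vertex of $c$ or by an endpoint of some $e_{r_j}$. Consequently, along any essential edge — a path one end of which abuts an essential vertex — the vertices of $c$ must occupy a contiguous initial segment measured from that end, since a gap would leave the first vertex past it unblocked. Given the number of vertices of $c$ on such a path, that segment, and hence their exact positions, is uniquely determined. I would then reconstruct the whole cell greedily, moving outward from $v_0$ in order of increasing label: at each essential vertex $w$ the incident edges $e_{r_j}$ (whose $\tau$ equals $w$, by part (1)) together with the no-order-respecting-edge condition dictate which branches at $w$ must be occupied, and the prescribed counts then fill those branches contiguously. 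Since each choice along the way is forced, two critical cells with the same edges and the same per-piece counts are assembled identically, whence $c=c'$.

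The step I expect to be genuinely delicate — and the place where the choice of decomposition really matters — is the behaviour at an essential vertex $w$ that is interior to the region surviving after the edges are deleted, i.e. one that is not itself removed. At such a $w$ the vertices of $c$ may be distributed among the several branches issuing from $w$, and distinct distributions with the same total can each be critical; so a single number attached to a coarse component will not in general suffice. The bookkeeping must therefore be refined to record the number of vertices in \emph{each} branch at every essential vertex — equivalently, in each essential edge of $G$ — after which the contiguous-filling observation makes each branch rigid. Proving this within-branch rigidity, verifying that the token (if any) occupying each essential vertex is itself forced by the blocking of the vertices beyond it, and then threading the local uniqueness statements together by induction on the labels, is where essentially all of the work lies; part (1) enters only to guarantee that every edge of $c$ is anchored at an essential vertex, which is exactly what makes this local analysis available.
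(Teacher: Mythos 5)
Your part (1) is correct, and it is essentially the paper's own argument run in the contrapositive: the paper shows that if $\tau(e)$ were the root, a boundary vertex, or a degree-two vertex, then $e$ would be (vacuously) order respecting, while you instead extract a witness from the failure of order-respectingness and conclude that $\tau(e)$ has two distinct children plus a parent. Same content, equally valid.

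Part (2) contains a genuine gap, and it sits exactly at the point you flag. You do not prove the statement as given: you assert that at an essential vertex $w$ interior to a component ``distinct distributions with the same total can each be critical,'' conclude that a single per-component count ``will not in general suffice,'' and propose instead to prove a refined statement with per-branch bookkeeping. That refined statement is not Lemma \ref{critlemma}(2), and it is also not what the paper needs downstream: well-definedness of the $S_G$-action in Section \ref{morsecomplexismodule} (``$x_{\mathfrak{e}}\cdot c$ is the unique critical cell obtained by adding a vertex to the component containing $\mathfrak{e}$'') is precisely uniqueness given per-component counts. The ingredient you are missing is the standing assumption that $G$ is sufficiently subdivided for Theorem \ref{cellstructure}: every path between vertices of degree $\neq 2$ has length at least $n-1$. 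Here is why that closes the gap. Blockedness makes the vertices of $c$ lying in a component $C$ an ancestor-closed chain hanging from the highest vertex $h$ of $C$, and by part (1), $h$ is either $v_0$, a child of some $\tau(e_{r_j})$ (an essential vertex), or a child of some $\iota(e_{r_j})$. Writing $d$ for the depth in $C$ of the shallowest essential vertex $w$ interior to $C$, the ambiguity you fear requires at least $d+2$ vertices of $c$ in $C$. If $h$ is a child of $\tau(e_{r_j})$, then $d+1=\mathrm{dist}(\tau(e_{r_j}),w)\geq n-1$, so one would need at least $n$ vertices, while $c$ has only $n-i\leq n-1$; the case $h=v_0$ is even worse. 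In the remaining case, $h$ a child of $\iota(e_{r_j})$, one has $d+2=\mathrm{dist}(\tau(e_{r_j}),w)\geq n-1$, which forces $i=1$ and all $n-1$ vertices into $C$; but the edge $e_{r_j}$ must have a witness, which is a child of $\tau(e_{r_j})$ and hence lies outside $C$ --- contradiction. So the branching you worry about never occurs, and the coarse count does determine the cell. Note this is not a cosmetic omission: in an under-subdivided tree your worry is realized (one can exhibit two distinct critical cells with the same edges and equal component counts), so any correct proof of (2) must invoke the subdivision hypothesis, which your sketch never does. To be fair, the paper's own proof of (2) is a one-line appeal to blockedness, so your instinct that blockedness alone is not enough is sound; but the correct resolution is to bring in the subdivision assumption, not to strengthen the hypothesis of the lemma.
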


\begin{proof}
For the first statement, let $e$ be an edge of $c$ which is not deleted. If $\tau(e) = v_0$, then it is clear that $e$ is order respecting. We note that $\tau(e)$ is not a boundary vertex in any other case by how the labeling on $T$ was defined. It therefore remains to show that $\tau(e)$ does not have degree 2. Indeed, if this were the case, then for any vertex $v$ of $G$, $\tau(e(v)) \neq \tau(e)$. In particular, $e$ must be order preserving. This proves the first claim.\\

For the second claim, one uses the fact that the vertices of $c$ must all be blocked.\\
\end{proof}

Note that the observations made in the above lemma were originally pointed out by Farley and Sabalka when they defined the vector field $V$ \cite[Section 3.3]{FS}. We collected these observations in Lemma \ref{critlemma} so that they could be easily referred back to during future sections. We will find that this lemma is critical in defining the polynomial ring structure on $\bigoplus_n H_i(B_nG)$. In the next section, we will find that the deleted edges of $G$ can be chosen so that they remain unchanged as $n$ increases and we repeatedly subdivide the graph. The above lemma also suggests that those edges of critical cells contained in the tree $T$ are unchanging in $n$. While this is not literally the case, as all edges of $T$ are being subdivided as $n$ increases, Lemma \ref{critlemma} tells us that the important information encoded by such an edge is the essential vertex of its tail, as well as the direction it is leaving this essential vertex. For this reason we will often be a bit loose with our language and claim that two critical cells with different choices of $n$ have the ``same'' edges.\\

\begin{definition}
Let $c$ be a critical cell of the discrete gradient vector field, and let $e \in T$ be an edge in $c$. The classification theorem implies that there must exist some vertex $v \in c$ such that the label of $v$ in $T$ is smaller than that of $\iota(e)$, while $\tau(e(v)) = \tau(e)$. We refer to such a vertex as a \textbf{witness} for $e$ in $c$. If $v$ is the only witness for $e$ in $c$, then we say $v$ is \textbf{necessary}.\\
\end{definition}

\section{The proof Theorem \ref{bettipolyrefine}}

\subsection{The setup}

We use this section to fix the notation which will be used throughout the proofs of the main theorems. Note that all of the constructions presented in the previous sections were already well established in the literature prior to this work. To the knowledge of the author, the main construction of the following sections - namely the action of $S_G$ on the critical cells of $\UConf_n(G)$ - does not appear elsewhere in the literature.\\

As was the case in all previous sections, let $G$ denote a graph, which is neither a circle nor a line segment. We will reserve $E$ to denote the number of essential edges of $G$, while $N_G$ will denote the number of essential vertices of $G$.\\

Next, we must construct our spanning tree $T$, as in Section \ref{dmorse}. To do this, we first subdivide every edge of $G$, which connects two essential vertices, once. Note that this includes any loops of $G$. Once this is done, we choose a spanning tree $T$ of $G$ which satisfies the following:

\begin{eqnarray}
\text{Every edge of $G$ not in $T$ is adjacent to an essential vertex of $G$.}\label{treecondition}
\end{eqnarray}

Note that it is not entirely obvious that such a $T$ exists for an arbitrary graph $G$. This fact was proven by Farley and Sabalka during the proof of \cite[Theorem 4.4]{FS}.\\

\begin{remark}
It is noted by Farley and Sabalka that if one chooses $T$ to satisfy (\ref{treecondition}), then Lemma \ref{critlemma} and Corollary \ref{critdecomp} imply Theorem \ref{dimconf}.\\
\end{remark}

Having chosen our spanning tree $T$ to satisfy (\ref{treecondition}), we observe that for $n \geq 2$, we can sufficiently subdivide $G$ for Theorem \ref{cellstructure} by subdividing $T$. This follows from the fact that we began by subdividing all edges of $G$ which connected essential vertices. We will often not differentiate the spanning trees chosen for each $n$ for this very reason.\\

\subsection{The modules $\M_{i,\dt}$}\label{morsecomplexismodule}

Recall from Section \ref{dmorsethy} the Morse complex
\[
\ldots \rightarrow \M_i \rightarrow \ldots \rightarrow \M_1 \rightarrow \M_0 \rightarrow 0
\]
where $\M_i$ is the free $\Z$-module with basis indexed by critical cells. In our case, we write $\M_{i,n}$ to denote the free $\Z$-module with basis indexed by the critical cells of the Farley-Sabalka discrete gradient vector field of $UD_n(G)$.\\

Let $\mathfrak{e}_1,\ldots,\mathfrak{e}_E$ denote the essential edges of the graph $G$. Then we set
\[
S_G := \Z[x_{\mathfrak{e}_1},\ldots,x_{\mathfrak{e}_{E}}]
\]

Our goal for the remainder of this section will be to argue, for each $i$, that $\M_{i,\dt}$ has the structure of a finitely generated graded $S_G$-module.\\

\begin{definition}
Fix $i$ and $n$, and let $\mathfrak{e}$ denote an essential edge of $G$. Given a critical $i$-cell $c = \{v_{j_1},\ldots, v_{j_{n-i}}, e_{r_1},\ldots, e_{r_i}\}$ of $UD_n(G)$, we define $x_{\mathfrak{e}} \cdot c$ to be the unique critical $i$-cell of $UD_n(G)$ obtained from $c$ by adding a vertex to the connected component of $T - \{\overline{e_{r_1}},\ldots,\overline{e_{r_i}}\}$ containing $\mathfrak{e}$. More precisely, $x_{\mathfrak{e}}\cdot c$ is obtained from $c$ by adding the smallest vertex on the connected component containing $\mathfrak{e}$. This is well defined by Lemma \ref{critlemma}, as well as the choice of the tree $T$, as $T$ will contain a unique representative of each essential edge of $G$ by construction. This turns the collection $\{\M_{i,n}\}_n$ into a graded $S_G$-module, which we denote $\M_{i,\dt}$.\\
\end{definition}

One can visualize the action of $S_G$ described in the above definition in the following way. The labeling on $T$ induces a natural flow on $T$. Namely, all edges flow towards the root of $T$. Let $c = \{v_{j_1},\ldots, v_{j_{n-i}}, e_{r_1},\ldots, e_{r_i}\}$ be a critical cell. We imagine the vertices $v_j$ as being particles drifting in the direction of the flow, while the edges $e_j$, along with their endpoints, are stationary blockades. For any essential edge $\mathfrak{e}$, the action of $x_{\mathfrak{e}}$ on $c$ involves placing a new particle somewhere on $\mathfrak{e}$, and allowing it to flow until it too is blocked.\\

We provide an illustration of the action of $S_G$ in Figure \ref{multiplication}.\\

\begin{figure}
\begin{tikzpicture}
\tikzstyle{every node}=[draw,circle,fill=white,minimum size=4pt,
                            inner sep=0pt]
    \draw (0,0) node (0) [fill = black, label=above:$0$] {}
		      --(.5,0) node (1) [label=above:$1$]{}
					--(1,0) node (2) [label=right:$2$]{}
					--(1,.5) node (3) [label=left:$3$]{}
					--(1,1) node (4) [fill = black, label=left:$4$]{}
					--(1,1.5) node (5) [fill = black, label=left:$5$]{}
					--(1,2) node (6) [label=left:$6$]{}
    			(4) -- (1.5,1) node (7) [fill = black, label=above:$7$]{}
					--(2,1) node (8) [label=above:$8$]{}
					(2) -- (1,-.5) node (9) [label=left:$9$]{}
					--(1,-1) node (10) [label=left: $10$]{};
					
	 \draw [line width=1mm] 
				 (4) -- (7);
				
	 \draw[->, line width=.5mm]
		     (3, .5) -- (5,.5);
				
	 \draw (6,0) node (0) [fill = black, label=above:$0$] {}
		      --(6.5,0) node (1) [label=above:$1$]{}
					--(7,0) node (2) [label=above:$2$]{}
					--(7.5,0) node(3)[label = right:$3$]{}
					--(7.5,.5) node (4) [label=left:$4$]{}
					--(7.5,1) node (5) [label=left:$5$]{}
					--(7.5,1.5) node (6) [fill = black, label=left:$6$]{}
					--(7.5,2) node (7) [fill = black,label = left:$7$]{}
					--(7.5,2.5) node (8) [fill = black,label = left:$8$]{}
					--(7.5,3) node (9) [label = left:$9$]{}
					(6) -- (8,1.5) node (10) [fill = black, label = above:$10$]{}
					--(8.5,1.5) node (11) [label = above: $11$]{}
					--(9,1.5) node (12) [label = above: $12$]{}
					(3) -- (7.5,-.5) node (13) [label = left:$13$]{}
					-- (7.5,-1) node (14) [label = left:$14$]{}
					-- (7.5,-1.5) node (15) [label = left:$15$]{};
		\draw [line width = 1mm]
		      (6)--(10);
		
\end{tikzpicture}
\caption{An illustration of multiplication by $x_{\mathfrak{e}}$, where $\mathfrak{e}$ is the essential edge containing the boundary vertex $v_6$ of the tree $G$ of Figure \ref{labeledtree}.}\label{multiplication}
\end{figure}
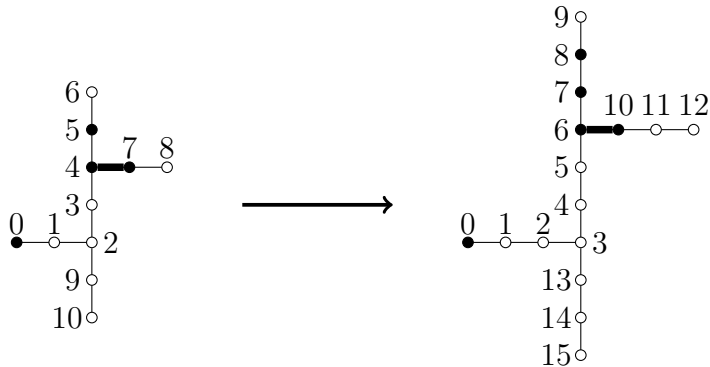

\begin{lemma}\label{fgcomplex}
For each $i$, the module $\M_{i,\dt}$ is finitely generated over $S_G$.\\
\end{lemma}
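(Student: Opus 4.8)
The plan is to establish finite generation of $\M_{i,\dt}$ over $S_G$ by producing, for each $i$, an explicit finite set of critical $i$-cells that generate the whole module under the $S_G$-action. The key structural fact to exploit is Lemma \ref{critlemma}(2): a critical $i$-cell is completely determined by its set of edges $\{e_{r_1},\ldots,e_{r_i}\}$ together with the number of its vertices lying in each connected component of $T - \{\overline{e_{r_1}},\ldots,\overline{e_{r_i}}\}$. Since the action of $x_{\mathfrak{e}}$ adds one vertex to the component containing $\mathfrak{e}$, the monoid action corresponds precisely to incrementing the vertex-count vectors on the components, so I want to translate the problem into a statement about tuples of nonnegative integers.

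First I would fix a "combinatorial type" of critical cell, meaning a choice of edge set $\{e_{r_1},\ldots,e_{r_i}\}$ recorded at the level of essential-vertex/direction data (in the loose sense discussed after Lemma \ref{critlemma}, the deleted edges and the essential-vertex tails are stable under subdivision and do not change with $n$). By Lemma \ref{critlemma}(1) each non-deleted edge of a critical cell has its tail at an essential vertex, and there are only finitely many essential vertices, finitely many directions out of each, and finitely many deleted edges; hence there are only finitely many such combinatorial types for a fixed $i$. The second step is to observe that for a fixed edge set, a critical cell is recorded by a vector of vertex-counts, one entry per connected component of $T - \{\overline{e_{r_1}},\ldots,\overline{e_{r_i}}\}$, and that each such component contains exactly one essential edge in the sense that the $S_G$-variables act by raising the corresponding coordinate by one. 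The constraint that the cell be critical (all vertices blocked) imposes that certain components must contain at least some minimal number of vertices — in particular each edge needs a witness — but crucially there is a minimum admissible count in each coordinate, so the set of admissible vertex-count vectors is an up-set (under coordinatewise $\geq$) in a translated copy of $\mathbb{Z}_{\geq 0}^{k}$.

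The third step is then a direct application of Dickson's Lemma (equivalently, the Noetherianity of the monomial module $S_G$ acting by the relevant variables): any up-set in $\mathbb{Z}_{\geq 0}^{k}$ has finitely many minimal elements. These minimal elements correspond to finitely many critical cells from which every other critical cell of that combinatorial type is reached by multiplying by a monomial in the variables $x_{\mathfrak{e}}$. Taking the union over the finitely many combinatorial types of edge sets yields a finite generating set for $\M_{i,\dt}$, proving the lemma. I would also need to check the matching of variables to components: the action of $x_{\mathfrak{e}}$ only affects the coordinate of the component containing $\mathfrak{e}$, and every component of $T - \{\overline{e_{r_1}},\ldots,\overline{e_{r_i}}\}$ contains at least one essential edge (using the structure of $T$ and condition (\ref{treecondition})), so the available variables generate all the coordinate-raising operations we need — no component is "stuck."

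The main obstacle I anticipate is the bookkeeping that verifies the $S_G$-action realizes exactly coordinatewise increments on a full-dimensional lattice up-set: one must confirm that (i) distinct components are each hit by at least one distinct variable so that the action is not missing a direction, and (ii) the criticality constraints really do cut out an up-set rather than some more complicated region (for instance, one must rule out an \emph{upper} bound on the number of vertices in a component, which is where the fact that the component is noncompact toward the boundary vertices, giving unlimited room, is essential). Once those two points are pinned down, Dickson's Lemma does the rest essentially for free; the argument is a reduction to a standard Noetherianity statement rather than a hard estimate.
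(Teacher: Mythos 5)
Your proof is correct, but it takes a genuinely different route from the paper's. The paper argues top-down: it shows every critical cell of $\M_{i,n}$ with $n > 2i$ lies in the image of the $S_G$-action on $\M_{i,n-1}$. The key point is a pigeonhole count: a critical $i$-cell has at most $i$ necessary witness vertices, so once $n-i > i$ some vertex is not a necessary witness; removing the label-maximal such vertex (which is forced to sit at the top of its ``pile,'' so no vertex becomes unblocked and every edge keeps a witness) yields a critical cell of $\M_{i,n-1}$, and the original cell is recovered by multiplying by the variable of the appropriate essential edge. This gives finite generation together with an explicit bound --- generators live in degrees $\leq 2i$ --- a quantitative fact the paper uses later (e.g.\ in the proof of Theorem \ref{treedeg}, where generating cells are exhibited in $\M_{i,2i}$). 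Your argument instead goes bottom-up: decompose by edge data (finitely many types, via Lemma \ref{critlemma}(1)), identify each piece with a set of lattice points via Lemma \ref{critlemma}(2), and invoke Dickson's lemma. This is sound, and the two verification points you flag do check out: the subdivision conventions and condition (\ref{treecondition}) guarantee every component of $T$ minus the cell's closed edges contains an essential edge, and the criticality constraints are monotone, so the admissible region is an up-set. One imprecision: that region is generally \emph{not} a single ``translated copy'' of $\mathbb{Z}_{\geq 0}^{k}$, because the witness condition for an edge $e$ is a disjunction --- at least one of the smaller-label components at $\tau(e)$ must be nonempty --- so there are typically several minimal vectors; since you apply Dickson's lemma to arbitrary up-sets this costs nothing, and indeed these multiple minimal elements are exactly the structure the paper later encodes in the pairs $(\overline{v},\overline{l})$ of Definition \ref{esspair} and Proposition \ref{monoideal}. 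The trade-off: the paper's proof is shorter and effective (an explicit generating degree), while yours is non-effective --- Dickson's lemma gives no degree bound --- but it delivers already at this stage the direct-sum, monomial-ideal picture that the paper only develops in Sections \ref{caseoftrees} and \ref{hilbertcomp}.
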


\begin{proof}
We claim that every critical cell in $\M_{i,n}$, with $n > 2i$, can be obtained from a critical cell in $\M_{i,n-1}$. Indeed, if $c$ is a critical cell in $\M_{i,n}$, then at least one vertex is not a necessary witness of some edge in $e$. Remove the vertex, among those which are not necessary witness vertices, which occupies the maximal position with respect to the labeling on $T$. Removing this vertex leaves us with a critical cell $c'$ in $\M_{i,n-1}$, as this removal cannot create unblocked vertices nor order respecting edges. It follows that $c$ is the image of $c'$ under the action of the appropriate essential edge.\\
\end{proof}

\begin{remark}\label{slowgrowth}
It follows from the above lemma that the number of critical $i$-cells grows, as a function of $n$, like a polynomial of degree $\leq E-1$. In fact, if $i < N_G$, we claim that this polynomial must have degree strictly less than $E-1$. First, we note that the action of $S_G$ on a critical cell $c$ does not affect the edges of $c$. Because of this it follows that $\M_{i,\dt}$ can be expressed as a direct sum of graded $S_G$-modules, where each summand corresponds to a choice of edges in our critical $i$-cell. For any fixed critical $i$-cell $c$, one observes that two variables $x_{\mathfrak{e}}$ and $x_{\mathfrak{e}'}$ will act identically whenever $\mathfrak{e}$ and $\mathfrak{e'}$ are on the same connected component of $G$ with the edges of $c$ removed. It follows that the summand of $\M_{i,\dt}$ containing this cell has Hilbert polynomial of degree strictly less than $E$ whenever $i < N_G$. The Hilbert polynomial of the whole of $\M_{i,\dt}$ is a sum of such polynomials, and therefore also has degree strictly less than $E$. This will be used in the final section of the paper.\\
\end{remark}

Lemma \ref{fgcomplex}, and Theorem \ref{morsediffcomp} now imply some asymptotic data about the homology groups $H_i(B_nG)$.\\

\begin{theorem}
Let $G$ be a graph with $E$ essential edges. Then for all $i \geq 0$, there exists a polynomial $P_i \in \Q[t]$, such that for all $n \gg i$
\[
\dim_\Q(H_i(B_nG;\Q)) \leq P_i(n).
\]
\text{}\\
\end{theorem}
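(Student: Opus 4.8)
The plan is to deduce this inequality directly from Lemma \ref{fgcomplex} together with the discrete Morse theory of Theorem \ref{morsediffcomp}. The essential observation is that we need only \emph{bound} the dimension of $H_i(B_nG;\Q)$, and for this we do not require the Morse differential to interact well with the $S_G$-action; it suffices to control the ranks of the chain groups $\M_{i,n}$ themselves.

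First I would recall that since $\UConf_n(G)$ is aspherical, we have $H_i(B_nG;\Q) \cong H_i(\UConf_n(G);\Q)$, and by Theorem \ref{morsediffcomp} the latter is computed as the $i$-th homology of the rationalized Morse complex $\M_\dt \otimes_\Z \Q$. In particular $H_i(B_nG;\Q)$ is a subquotient of the $\Q$-vector space $\M_{i,n} \otimes_\Z \Q$, whence
\[
\dim_\Q H_i(B_nG;\Q) \leq \dim_\Q(\M_{i,n} \otimes_\Z \Q) = \rank_\Z \M_{i,n},
\]
which is exactly the number of critical $i$-cells of $UD_n(G)$.

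Next I would pass to the graded module $\M_{i,\dt} \otimes_\Z \Q$, which is a finitely generated graded module over $S_G \otimes_\Z \Q \cong \Q[x_{\mathfrak{e}_1},\ldots,x_{\mathfrak{e}_E}]$ by Lemma \ref{fgcomplex}. The function $n \mapsto \rank_\Z \M_{i,n}$ is precisely the Hilbert function of this module, so by the standard existence theorem for Hilbert polynomials there is a polynomial $P_i \in \Q[t]$, of degree at most $E-1$, agreeing with this Hilbert function for all $n$ sufficiently large. Combining this with the displayed inequality yields $\dim_\Q H_i(B_nG;\Q) \leq P_i(n)$ for all $n \gg i$, as desired.

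The individual steps are routine; the genuine content — finite generation of $\M_{i,\dt}$ — has already been carried out in Lemma \ref{fgcomplex}, and Remark \ref{slowgrowth} even pins down the degree. The one conceptual subtlety worth flagging is exactly the point noted above: because we are establishing merely an inequality, we sidestep the much harder question of whether the $S_G$-module structure descends through the Morse differential to the homology. That question is precisely what must be resolved in order to promote this bound to an equality, and for a tree it is handled by the vanishing of the Morse differential (cf.\ Theorem \ref{treefree} and the surrounding discussion). This is the reason the sharper Theorem \ref{bettipolyrefine} is, for now, restricted to the tree case.
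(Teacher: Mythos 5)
Your proposal is correct and is essentially the paper's own argument: the paper states this theorem as a direct consequence of Lemma \ref{fgcomplex} and Theorem \ref{morsediffcomp}, which is exactly the chain of reasoning you spell out (homology is a subquotient of the rationalized Morse chain groups, whose ranks are the Hilbert function of the finitely generated graded $S_G$-module $\M_{i,\dt}$, hence eventually polynomial of degree at most $E-1$). Your closing remark about why the bound cannot be upgraded to an equality without knowing the differential commutes with the $S_G$-action matches the paper's discussion as well.
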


\subsection{The case of trees}\label{caseoftrees}

In this section, we begin to explore the specific case where $G$ is a tree. The work of Farley \cite{Fa} will allow us to conclude quite a bit more than we are able to in the general case. To begin with, we state a refined version of Theorem \ref{treefree}.\\

\begin{theorem}[\cite{Fa}]\label{treefree2}
Let $G$ be a tree. Then for each $n$ the Morse complex $\M_{i,n}$ has trivial differential. In particular,
\[
\mathcal{H}_i \cong \M_{i,\dt},\\
\]
as $S_G$-modules, where $\mathcal{H}_i$ is the module of Theorem \ref{homologyismodule}.\\
\end{theorem}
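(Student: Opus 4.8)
The statement has two parts: first, that for a tree the Morse differential $\widetilde{\partial}$ on $\M_{\dt,n}$ vanishes identically for every $n$ (the substantive claim, due to Farley); and second, that this vanishing upgrades the identification of homology with Morse chains to an isomorphism of $S_G$-modules. The plan is to treat the vanishing as the main theorem and to derive the module statement as a formal consequence.

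For the vanishing, the goal is to show $\widetilde{\partial}(c) = 0$ for every critical $i$-cell $c$. Recall from Definition \ref{morsediff} that $\widetilde{\partial}(c) = R^\infty(\partial(c))$, where, writing the edges of $c$ as $e_{r_1},\ldots,e_{r_i}$, the cubical chain $\partial(c)$ is the alternating sum over $j$ of the two faces $c^j_\iota$ and $c^j_\tau$ obtained by replacing $e_{r_j}$ with its head $\iota(e_{r_j})$ and with its tail $\tau(e_{r_j})$; one then applies the gradient reduction $R^\infty$ to flow each face down to a $\Z$-combination of critical $(i-1)$-cells. First I would exploit the simplifications special to trees: since $G = T$ there are no deleted edges, so by Lemma \ref{critlemma} every edge $e_{r_j}$ of $c$ has an essential vertex as its tail and carries a witness vertex. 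I would then analyze, face by face, the gradient path issuing from each $c^j_\iota$ and $c^j_\tau$, using that the labeling induces a flow of the vertices monotonically toward the root $v_0$ of $T$. The crux is that, because $T$ is a tree and hence has no graph-theoretic cycles, no nontrivial gradient path can carry a face of $c$ to a critical $(i-1)$-cell in a way that survives reduction: the contributions either land on collapsible cells (killed by $R$) or occur in pairs of opposite sign and cancel. Carrying out this combinatorial bookkeeping, in the spirit of Farley's analysis \cite{Fa}, is the hard part, and it is exactly where acyclicity of $T$ enters essentially; over a graph with cycles the analogous paths need not cancel, which is why the result is special to trees.

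Granting the vanishing, the module statement is immediate. With $\widetilde{\partial}=0$ on $\M_{\dt,n}$, the homology of the Morse complex in each degree is the full Morse chain group, so combining Theorem \ref{morsediffcomp} with the homotopy equivalence of Theorem \ref{cellstructure} and asphericity (Theorem \ref{asp}) yields natural identifications
\[
H_i(B_nG) \cong H_i(\UConf_n(G)) \cong H_i(UD_n(G)) \cong H_i(\M_{\dt,n}) = \M_{i,n}
\]
for every $n$. Here the last equality is literally the identity on the Morse chain group, so it automatically intertwines the $S_G$-action of Section \ref{morsecomplexismodule}, which was defined directly on critical cells. Assembling over all $n$ and comparing with the graded $S_G$-module $\mathcal{H}_i$ of Theorem \ref{homologyismodule}, whose $n$-th graded piece is $H_i(\UConf_n(G))$ carrying precisely this action, gives the desired isomorphism $\mathcal{H}_i \cong \M_{i,\dt}$ of graded $S_G$-modules. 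Indeed, the vanishing of $\widetilde{\partial}$ is exactly what guarantees that the action on critical cells descends to homology, so no further compatibility check is needed.

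I expect essentially all the difficulty to sit in the vanishing of the Morse differential; the reduction to the module statement is robust, but its input requires the detailed gradient-path analysis of Farley. A tempting shortcut---deducing $\widetilde{\partial}=0$ purely from freeness of $H_i(B_nG)$ (Theorem \ref{treefree}) together with the Euler-characteristic computation (Theorem \ref{eulercharacteristic})---does not suffice, since matching Euler characteristics only constrains the alternating sum of the ranks and would still permit canceling nonzero differentials between consecutive degrees. This is precisely why the combinatorial argument cannot be bypassed.
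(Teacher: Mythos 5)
Your proposal is correct and follows essentially the same route as the paper: the paper does not reprove the vanishing of the Morse differential either, but imports it from Farley \cite{Fa}, and then obtains $\mathcal{H}_i \cong \M_{i,\dt}$ exactly as you do---as a formal consequence of the zero differential, since homology of the Morse complex is then literally the group of critical cells, on which the $S_G$-action of Section \ref{morsecomplexismodule} was defined. Your closing observation that freeness plus Euler characteristics cannot substitute for Farley's gradient-path analysis is also accurate, and your sketched cancellation mechanism (faces of a critical cell flowing to the same critical cell with opposite signs) is indeed the mechanism underlying Farley's argument.
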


To state our main stability theorem, we begin with the following definition.\\

\begin{definition}\label{deltaig}
Let $G$ be a tree. Then we define the quantity
\[
\Delta_G^i := \max_{\{\{v_{j_1},\ldots,v_{j_i}\} \mid v_{j_k} \text{ essential}\}} \{ \dim_\Q(H_0(G - \{v_{j_1},\ldots,v_{j_i}\};\Q))\}.
\]
For example, $\Delta_G^1$ is the maximum degree of $G$, while $\Delta_{G}^{N_G}$ is $E$, the number of essential edges. By convention, $\Delta_G^0 = 1$ while $\Delta_G^i = 0$ for $i > N_G$.\\
\end{definition}

\begin{theorem}\label{treedeg}
Let $G$ be a tree, and fix $i\geq 0$. Then the Betti number $b_i(n) := \dim_\Q(H_i(B_n(G);\Q))$ is equal to a polynomial of degree $\Delta_G^i - 1$ for $n \gg i$.\\
\end{theorem}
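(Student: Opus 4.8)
The plan is to translate the Betti number into a Hilbert function and then to compute a Krull dimension. By Theorem \ref{treefree2} the Morse differential vanishes, so $\mathcal{H}_i \cong \M_{i,\dt}$ and hence $b_i(n) = \dim_\Q(\M_{i,\dt}\otimes_\Z\Q)_n$; that is, $b_i(n)$ is exactly the number of critical $i$-cells of $UD_n(G)$. By Lemma \ref{fgcomplex} the module $\M_{i,\dt}\otimes_\Z\Q$ is a finitely generated module over the standard-graded polynomial ring $S_G\otimes_\Z\Q = \Q[x_{\mathfrak{e}_1},\ldots,x_{\mathfrak{e}_E}]$, each variable sitting in degree $1$ since multiplication by $x_{\mathfrak{e}}$ raises $n$ by one. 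The Hilbert--Serre theorem then produces a polynomial $P_i$ with $b_i(n)=P_i(n)$ for $n\gg 0$ and $\deg P_i = \dim(\M_{i,\dt}\otimes_\Z\Q)-1$, where $\dim$ is Krull dimension. Thus the whole theorem reduces to the single assertion
\[
\dim(\M_{i,\dt}\otimes_\Z\Q) = \Delta_G^i.
\]

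To compute this dimension I would exploit the decomposition recorded in Remark \ref{slowgrowth}. Since the $S_G$-action never alters the edge-set of a critical cell, $\M_{i,\dt}$ splits as $\bigoplus_{c_0}\M_{i,\dt}^{c_0}$, indexed by the edge-sets $c_0=\{e_{r_1},\ldots,e_{r_i}\}$ occurring in critical $i$-cells, and the dimension of the total module is the maximum of the dimensions of the summands. Fix such a $c_0$. By Lemma \ref{critlemma} each $\tau(e_{r_k})$ is essential, and distinctness of these tails (forced by disjointness of the closed cells inside a cube) means that $c_0$ selects $i$ distinct essential vertices. Within the summand, any two variables $x_{\mathfrak{e}},x_{\mathfrak{e}'}$ indexed by essential edges lying in the same component of $T-\{\overline{e_{r_1}},\ldots,\overline{e_{r_i}}\}$ act identically, so $\M_{i,\dt}^{c_0}$ is a finitely generated graded module over a polynomial ring in $r(c_0)$ variables, where $r(c_0)$ is the number of those components that can absorb arbitrarily many blocked vertices. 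Hence $\dim\M_{i,\dt}^{c_0}\le r(c_0)$, and I would identify $\max_{c_0} r(c_0)$ with $\Delta_G^i$.

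For the upper bound I would note that $\overline{e_{r_k}}$ contains the essential vertex $\tau(e_{r_k})$, so the growth directions of $\M_{i,\dt}^{c_0}$ inject into the components of $G-\{\tau(e_{r_1}),\ldots,\tau(e_{r_i})\}$; as this deletes $i$ essential vertices, its component count is at most $\Delta_G^i$ by definition, giving $r(c_0)\le\Delta_G^i$ — precisely the bound already asserted in Theorem \ref{sqfr}. The delicate point is that taking closures also deletes the head vertices $\iota(e_{r_k})$, which a priori could fragment a component further; I would rule out spurious growth directions by checking that any piece split off solely by a head vertex fails to contain a full essential edge and so cannot receive unboundedly many vertices. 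For the lower bound I would take a family of $i$ essential vertices realizing $\Delta_G^i=\dim_\Q H_0(G-\{v_{j_1},\ldots,v_{j_i}\};\Q)$ and construct an explicit critical $i$-cell whose edge at each $v_{j_k}$ points down its last-visited branch, so that a smaller-labelled sibling vertex is a witness and the edge is order-disrespecting, with all remaining vertices blocked; deleting the closures of these edges then leaves exactly $\Delta_G^i$ components, each an essential edge that can be stuffed with arbitrarily many blocked vertices. This yields a summand of dimension $\Delta_G^i$ and closes the computation.

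The main obstacle is the combinatorial bookkeeping of the last paragraph: showing that the number of genuinely unbounded growth directions of a summand equals the number of components obtained by deleting the tail essential vertices — neither inflated by the extra deleted head vertices nor by components too small to grow — while simultaneously exhibiting a critical cell that attains the extremal configuration defining $\Delta_G^i$. This is the step where the concrete geometry of the Farley--Sabalka vector field (blocked vertices, witnesses, and the planar depth-first order) must be reconciled with the purely combinatorial invariant $\Delta_G^i$.
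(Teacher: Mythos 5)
Your overall route is the paper's: Theorem \ref{treefree2} reduces $b_i(n)$ to the Hilbert function of $\M_{i,\dt}\otimes_\Z\Q$, one decomposes by edge-sets, bounds the number of effectively distinct variables acting on each summand by $\Delta_G^i$, and realizes that bound with an explicit critical cell supported at $i$ essential vertices achieving $\Delta_G^i$. The differences are cosmetic: you phrase the degree count via Krull dimension and Hilbert--Serre rather than directly, and your extremal cell points each edge down the last-visited branch, whereas the paper chooses the second-leftmost branch precisely so that each edge has a \emph{unique} witness position, making the extremal summand have rank exactly $\binom{n-2i+\Delta_G^i-1}{\Delta_G^i-1}$ in degree $n$ with no inclusion--exclusion needed.

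However, your proposed resolution of the ``delicate point'' is incorrect, even though the point itself turns out to be harmless. You claim that a piece split off solely by the head vertex $\iota(e_{r_k})$ ``fails to contain a full essential edge and so cannot receive unboundedly many vertices.'' It can: as $n$ grows the graph is further subdivided, and vertices pile up in that piece, blocked against $\iota(e_{r_k})$, which is occupied as an endpoint of $e_{r_k}$; indeed, the variable $x_{\mathfrak{e}}$ attached to the essential edge $\mathfrak{e}$ whose initial segment is $e_{r_k}$ acts exactly by depositing vertices there. What saves the bound is that no \emph{extra} components arise in the first place: since the graph is subdivided, $\iota(e_{r_k})$ has degree $2$ and $e_{r_k}$ is the initial segment of its branch, so deleting $\overline{e_{r_k}}$ splits off exactly one piece beyond the head, and that piece corresponds to the component of $G-\{\tau(e_{r_1}),\ldots,\tau(e_{r_i})\}$ in the direction of $e_{r_k}$. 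Concretely, both $T-\bigcup_k\overline{e_{r_k}}$ and $G-\{\tau(e_{r_1}),\ldots,\tau(e_{r_i})\}$ have exactly $1+\sum_k(\mu(\tau(e_{r_k}))-1)$ components (count vertices minus edges in the resulting forests), so your injection of growth directions is in fact a bijection, and $r(c_0)\le\Delta_G^i$ follows with no spurious case to rule out. With that repair --- and noting in your lower bound that the components are unions of essential edges rather than single essential edges, and that the ``at least one witness per edge'' constraint only subtracts lower-degree terms from the count when witnesses are not unique --- your argument goes through and matches the paper's.
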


\begin{proof}
The Theorem \ref{treefree2} and the work of the previous sections imply that $b_i(n)$ is eventually a polynomial; it remains only to show that this polynomial has the claimed degree.\\

We begin by partitioning the basis vectors of $\M_{i,n}$ according to the collection of edges which appear. The action of $S_G$ does not impact the edges, and so each of these partitions will correspond to a summand of $\M_{i,n}$. Moreover, considering any one of these summands, we observe that the variables induce at most $\Delta_G^i$ distinct operators on $\M_{i,n}$. It follows that the degree of the Hilbert polynomial of $\M_{i,\dt}$ is at most $\Delta_G^i-1$. To finish the proof, we will show that $\M_{i,\dt}$ has a summand whose Hilbert polynomial has degree $\Delta_G^i-1$.\\

Fix the $i$ essential vertices of $G$ whose absence realizes the maximum in Definition \ref{deltaig}. For each essential vertex in this fixed list, let $e$ be an edge adjacent to it in the second leftmost direction, relative to the direction of the root. Note that any critical cell containing these edges has a unique essential edge which can house a witness vertex. There is therefore a critical $i$-cell in $\M_{i,2i}$ constructed by choosing each of these edges, along with the unique witness vertex for each. Denote this cell by $c$. The submodule generated by $c$ is a summand of $\M_{i,\dt}$. We claim that this submodule our desired summand.\\

Any $i$-cell of $\M_{i,n}$ which shares the same edges of $c$ - i.e. those basis vectors in the $n$-th graded piece of $S_G\cdot c$ - is entirely determined by distributing $n-2i$ vertices to one of $\Delta_G^i$ components. Thus,
\[
\rank_\Z((S_G\cdot c)_n) = \binom{n-2i + \Delta_G^i - 1}{\Delta_G^i-1}.
\]
as desired.\\
\end{proof}

The only piece of Theorem \ref{bettipolyrefine} which remains to be proven is in showing that $b_i(n)$ agrees with a polynomial for all $n \geq 0$. To accomplish this, we must first prove Theorem \ref{sqfr}. As a first simplification, we note that our answer is unaffected by changing basis to $\Q$, and may therefore assume we are working over a rational polynomial ring. This grants us access to many classical approaches to solving the problem. We will also make heavy use of the direct sum decomposition of $\M_{i,\dt}$ described in the proof of Theorem \ref{treedeg}.\\

\begin{definition}\label{esspair}
Let $G$ be a tree. The notation
\[
\overline{v} := (v_{j_1},\ldots,v_{j_i})
\]
will always refer to an $i$-tuple of essential vertices of $G$, appearing in the order induced by the labeling on $G$. We write $\mu(\overline{v})$ to denote the number of connected components of $G-\{v_{j_1},\ldots,v_{j_i}\}$. Given $\overline{v}$, let $\overline{l} := (l_1,\ldots,l_i)$ denote an $i$-tuple of positive integers such that
\[
1 \leq l_k \leq \mu(v_{j_k})-2 \text{ for all $k$}.
\]
We will also set
\[
|\overline{l}| := \sum_k l_k.
\]
Given a pair $(\overline{v},\overline{l})$, we associate to it the summand of $\M_{i,\dt} \otimes_\Z \Q$ generated in degree $2i$ by critical cells $c$ satisfying the following two properties:
\begin{enumerate}
\item If $e \in c$ is an edge, then $\tau(e) = v_{j_k}$ for some $k$, and;
\item if $e \in c$ has $\tau(e) = v_{j_k}$, then there are precisely $l_k$ essential edges which can house witness vertices of $e$.\\
\end{enumerate}
Note that these two pieces of data uniquely determine the edges which are allowed to appear in cells which form the bases for the graded pieces of the summand. Denote this summand by $N^{(\overline{v},\overline{l})}$.\\

We observe that two variables $x_{\mathfrak{e}_1}, x_{\mathfrak{e}_2}$ of $S_G$ act identically on $N^{(\overline{v},\overline{l})}$ if and only if $\mathfrak{e}_1$ and $\mathfrak{e}_2$ are contained in the same connected component of $G - \{v_{j_1},\ldots,v_{j_i}\}$. Let $S^{(\overline{v},\overline{l})}$ denote the quotient of $S_G \otimes_\Z \Q$ by $x_{\mathfrak{e}_2} - x_{\mathfrak{e}_2}$. In particular, $S^{(\overline{v},\overline{l})}$ is isomorphic to a rational polynomial ring in $\mu(\overline{v})$ variables, and $N^{(\overline{v},\overline{l})}$ is a finitely generated graded module over $S^{(\overline{v},\overline{l})}$.\\
\end{definition}

\begin{remark}
If $i = 0$, then $\overline{v}$ and $\overline{l}$ are empty tuples with $\mu(\overline{v}) = 1$. If $i > N_G$, then by convention $N^{(\overline{v},\overline{l})} = 0$.\\
\end{remark}

next goal will be to show that the Hilbert function
\[
n \mapsto \dim_\Q(N^{(\overline{v},\overline{l})})
\]
agrees with a polynomial for all $n$. Note that we already know that this Hilbert function is a polynomial for $n$ sufficiently large, and must only show that this agreement is the case for all $n$. This will imply the same about the Hilbert function for $\M_{i,\dt}$. We accomplish this through the computations in the next section, although we spend time now to set some ground work in this direction. Following this, we will explicitly compute this Hilbert polynomial. We begin with the following.\\

\begin{proposition}\label{monoideal}
The module $N^{(\overline{v},\overline{l})}(i)$ is isomorphic to a squarefree monomial ideal of $S^{(\overline{v},\overline{l})}$.
\end{proposition}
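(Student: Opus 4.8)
The plan is to make the $\Q$-basis of $N^{(\overline{v},\overline{l})}$ and its $S^{(\overline{v},\overline{l})}$-action completely explicit, and then to recognize the result as the $\Q$-span of a family of monomials whose supports form an up-set in the Boolean lattice, which is exactly the combinatorial data of a squarefree monomial ideal. Throughout I write $C_1,\ldots,C_m$, with $m=\mu(\overline{v})$, for the connected components of $G-\{v_{j_1},\ldots,v_{j_i}\}$, and let $y_a$ be the variable of $S^{(\overline{v},\overline{l})}$ attached to $C_a$. By construction the $i$ edges $e_1,\ldots,e_i$ occurring in the cells of this summand are fixed, with $\tau(e_k)=v_{j_k}$ (one edge per chosen essential vertex, since the value $l_k$ pins down its direction).

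First I would use Lemma \ref{critlemma}(2): a critical cell of the summand is determined by the number of its vertices in each $C_a$. Hence the critical cells give a basis indexed by tuples $(a_1,\ldots,a_m)\in\Z_{\geq 0}^m$, and the action of $y_a$ -- adding the smallest vertex of $C_a$ -- simply increments $a_a$. This exhibits $N^{(\overline{v},\overline{l})}$, as a graded $S^{(\overline{v},\overline{l})}$-module, as the span of those monomials $y_1^{a_1}\cdots y_m^{a_m}$ whose exponent vectors index critical cells; it remains only to cut out which tuples actually occur.

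The combinatorial heart, and the step I would treat most carefully, is the description of the admissible tuples. The observation I would establish is that inside a component $C_a$ only the vertex $w_a$ nearest the root (equivalently, of smallest label) can act as a witness, and that its parent in $T$ is a \emph{single} removed essential vertex $v_{j_{k(a)}}$; consequently $C_a$ can house a witness only for the one edge $e_{k(a)}$, so the witness-housing sets $W_k:=\{\, a : C_a \text{ can house a witness for } e_k \,\}$ are pairwise disjoint. Since for a tree every edge lies in $T$, a cell is critical exactly when it has no order-respecting edge, i.e. when every $e_k$ admits a witness; because the smallest vertex of $C_a$ is $w_a$, this happens precisely when $\mathrm{supp}(a)\cap W_k\neq\emptyset$ for every $k$. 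Necessity is immediate; for sufficiency I would start from the (critical) cell whose vertices are $\{w_a : a\in S\}$ for a minimal transversal $S$ and apply the $S_G$-action -- which by definition outputs critical cells -- to reach every tuple whose support meets all $W_k$. Thus the admissible supports are exactly $\mathcal{U}=\{\, S\subseteq[m] : S\cap W_k\neq\emptyset \ \forall k \,\}$, which is visibly closed under enlargement, hence an up-set whose minimal members are the transversals of the disjoint family $W_1,\ldots,W_i$.

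Finally I would invoke the standard dictionary between up-sets and squarefree monomial ideals: for any up-set $\mathcal{U}$ of subsets of $[m]$ the $\Q$-span of $\{\, y^a : \mathrm{supp}(a)\in\mathcal{U} \,\}$ is the squarefree monomial ideal of $S^{(\overline{v},\overline{l})}$ generated by $\{\, \prod_{a\in S} y_a : S \text{ minimal in } \mathcal{U} \,\}$, since a monomial lies in that ideal iff its support contains a minimal transversal iff its support belongs to $\mathcal{U}$. Matching gradings then finishes the argument: a critical $i$-cell on $n$ points carries $n-i$ vertices, so $N^{(\overline{v},\overline{l})}_n$ corresponds to the degree-$(n-i)$ part of the ideal, giving $N^{(\overline{v},\overline{l})}(i)\cong I_{\mathcal{U}}$ as graded $S^{(\overline{v},\overline{l})}$-modules, as claimed. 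I expect the only genuinely delicate points to be the single-witness-per-component fact that forces the $W_k$ to be disjoint, and the verification that each admissible support is realized by an honest critical cell (with all vertices blocked); both I would extract from the explicit smallest-vertex description of the action together with Lemma \ref{critlemma}.
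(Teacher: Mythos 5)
Your proposal is correct and follows essentially the same route as the paper: both identify a critical cell with the monomial recording which components of $G-\{v_{j_1},\ldots,v_{j_i}\}$ house its vertices, and both derive squarefreeness of the generators from the fact that a single component can house a witness for at most one of the edges $e_1,\ldots,e_i$ (your disjointness of the $W_k$). The only difference is one of explicitness: your up-set/transversal analysis spells out what the paper compresses into ``extending this map through the action of $S^{(\overline{v},\overline{l})}$,'' and it also anticipates the explicit generator description that the paper only records later, in Lemma \ref{tensordecomp}.
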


\begin{proof}
We recall that a monomial ideal is called squarefree, if it is generated by monomials, none of which are divisible by a square.\\

By definition, $N^{(\overline{v},\overline{l})}(i)_{i} = N^{(\overline{v},\overline{l})}_{2i}$ is spanned by critical cells containing the edges $e_1,\ldots,e_i$ determined by the pair $(\overline{v},\overline{l})$ (see Definition \ref{esspair}), along with a single witness vertex for each. To define a map $\phi:N^{(\overline{v},\overline{l})}(i) \rightarrow S^{(\overline{v},\overline{l})}$, it suffices to specify where each of these cells is mapped. We set for any such cell $c$
\begin{eqnarray}
\phi(c) = \prod_{\mathfrak{e}}x_{\mathfrak{e}}, \label{monodecomp}
\end{eqnarray}
where the product is over essential edges containing a witness vertex of $c$. Note that it is impossible for a single essential edge to house a witness vertex for multiple edges, and so the above monomial is indeed squarefree. Extending this map through the action of $S^{(\overline{v},\overline{l})}$ defines our desired isomorphism.\\
\end{proof}

Squarefree monomial ideals are some of the most well understood objects in commutative algebra. They are also the subject of the so-called Stanley-Reisner theory (see \cite{MS} for a comprehensive reference on the subject).\\

\subsection{Computing the Hilbert polynomial}\label{hilbertcomp}

We note that for $i = 1$, the polynomial describing the Betti number $b_1(n)$ has been explicitly computed in terms of invariants of the tree $G$. Indeed, this follows from the structure theorem of Ko, and Park \cite[Theorem 3.6]{KP}, and also from the work of Farley and Sabalka \cite{FS}. The Hilbert polynomial has also been computed for all $i$ in the case where $G$ is a tree with maximal degree 3 by Farley \cite{Fa}. Aside from these cases, no explicit description of the polynomial is known. That being said, the work in this paper implies that computing this polynomial is no more difficult than computing the Hilbert polynomials of some squarefree monomial ideals. In other words, for any tree $G$, the work of this paper reduces the task of computing $H_i(B_nG)$ for all $n$ to a finite computation. We will proceed with this computation in this section.\\

\begin{remark}
Proposition \ref{monoideal} reveals that the monomial ideals which appear as graded shifts of summands of $\mathcal{H}_i$ are quite simple. Because of this, we will find that the computation of the Hilbert polynomial of $\mathcal{H}_i$ falls more in the realm of enumerative combinatorics rather than Stanley-Reisner theory. It is likely that studying the cases of more general graphs will require more robust commutative algebra, and we therefore proceed with the case that $G$ is a tree using that language. Our hope is that once the case of general graphs is completed, this will aid in creating a uniform means of approaching asymptotic behaviors in the configuration spaces of graphs.\\
\end{remark}

In this section we will use the decomposition of Proposition \ref{monoideal} to compute the Hilbert polynomial of $\M_{i,\dt}$. Proposition \ref{monoideal} implies that $N^{(\overline{v},\overline{l})}(i)$ is isomorphic to a squarefree monomial ideal of $S^{(\overline{v},\overline{l})}$. To ease computations, we reorder the variables of $S^{(\overline{v},\overline{l})}$ to satisfy the following. Begin by labeling the essential edges adjacent to $v_{j_1}$ which house witness vertices by $x_1$ through $x_{l_1}$, chosen in the order induced by our ordering of the vertices of $G$. Next, label the essential edges housing a witness vertex adjacent to $v_{j_2}$ with the variables $x_{l_1 + 1}$ to $x_{l_1 + l_2}$, using the same rule. Continue in this fashion until $|\overline{l}|$ essential edges have been labeled. We then use the remaining variables to label the final $\mu(\overline{v}) - |\overline{l}|$ essential edges, chosen in any order. Our first observation is the following.\\

\begin{lemma} \label{tensordecomp}
There is an isomorphism of $S^{(\overline{v},\overline{l})}$-modules,
\[
N^{(\overline{v},\overline{l})}(i) \cong (x_1,\ldots,x_{l_1}) \otimes_\Q (x_{l_1 + 1},\ldots, x_{l_1 + l_2}) \otimes_\Q \ldots \otimes_\Q (x_{\sum_{j=1}^{i-1} l_j + 1}, \ldots, x_{|\overline{l}|}) \otimes_\Q \Q[x_{|\overline{l}| + 1},\ldots, x_{\mu(\overline{v})}],
\]
where $(x_1,\ldots,x_{l_1})$ is considered as a module over $\Q[x_1,\ldots,x_{l_1}]$, $(x_{l_1 + 1},\ldots, x_{l_1 + l_2})$ a module over $\Q[x_{l_1+1},\ldots,x_{l_1+l_2}]$, and so on.\\
\end{lemma}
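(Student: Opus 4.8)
The plan is to recognize $N^{(\overline{v},\overline{l})}(i)$ as an honest product of monomial ideals supported on disjoint blocks of variables, and then to identify such a product with the claimed tensor product. By Proposition \ref{monoideal} and the explicit map $\phi$ built there, $N^{(\overline{v},\overline{l})}(i)$ is the ideal of $S^{(\overline{v},\overline{l})}$ generated by the images $\phi(c)$ of the degree-$i$ generating cells $c$. Such a cell chooses, for each essential vertex $v_{j_k}$, a single witness vertex for its edge, and by Definition \ref{esspair} that witness lies on exactly one of the $l_k$ essential edges labeled $x_{\sum_{j<k} l_j + 1}, \ldots, x_{\sum_{j\le k} l_j}$ in the chosen ordering. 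Hence $\phi(c) = x_{a_1} x_{a_2} \cdots x_{a_i}$ with $a_k$ ranging over the $k$-th block, and as $c$ runs over all generators we recover exactly the generators of the product ideal $I_1 I_2 \cdots I_i$, where $I_k := (x_{\sum_{j<k}l_j+1}, \ldots, x_{\sum_{j\le k} l_j})$. The leftover variables $x_{|\overline{l}|+1}, \ldots, x_{\mu(\overline{v})}$ impose no constraint on the cells, which is exactly why the last tensor factor is the full polynomial ring.

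Next I would set up the tensor decomposition of the ambient ring. Writing $A_k := \Q[x_{\sum_{j<k}l_j+1}, \ldots, x_{\sum_{j\le k} l_j}]$ for $1 \le k \le i$ and $A_{i+1} := \Q[x_{|\overline{l}|+1}, \ldots, x_{\mu(\overline{v})}]$, disjointness of the blocks gives $S^{(\overline{v},\overline{l})} \cong A_1 \otimes_\Q \cdots \otimes_\Q A_{i+1}$. Each $I_k$ is an $A_k$-module and $A_{i+1}$ is a module over itself, so $I_1 \otimes_\Q \cdots \otimes_\Q I_i \otimes_\Q A_{i+1}$ is naturally a module over $S^{(\overline{v},\overline{l})}$. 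I would then define the multiplication map
\[
\Psi \colon I_1 \otimes_\Q \cdots \otimes_\Q I_i \otimes_\Q A_{i+1} \longrightarrow S^{(\overline{v},\overline{l})}, \qquad m_1 \otimes \cdots \otimes m_{i+1} \longmapsto m_1 m_2 \cdots m_{i+1},
\]
which is an $S^{(\overline{v},\overline{l})}$-module homomorphism by construction. Its image is the submodule generated by all products of one element from each factor, namely the product ideal $I_1 \cdots I_i = N^{(\overline{v},\overline{l})}(i)$, so it suffices to prove injectivity. For this I would pass to monomial bases: each $I_k$ has $\Q$-basis the nonconstant monomials of $A_k$, the factor $A_{i+1}$ has $\Q$-basis all monomials in its variables, and $\Psi$ sends a basis tensor to the product monomial. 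Since the blocks are disjoint, every monomial of $S^{(\overline{v},\overline{l})}$ factors uniquely as a product of one monomial from each block, so distinct basis tensors map to distinct monomials and $\Psi$ is injective.

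The argument is largely bookkeeping, and the only genuinely delicate point is this injectivity: one must confirm that the unique factorization of monomials across disjoint variable blocks aligns precisely with the chosen monomial bases of the tensor factors. This is clean here because each $I_k$ is generated by a set of distinct variables, so its nonconstant monomials form an unambiguous $\Q$-basis and no cancellation can occur. A final cosmetic point is to track the grading: each $I_k$ contributes its generators in degree one while $A_{i+1}$ contributes the unit in degree zero, so $\Psi$ carries the degree-$i$ generators of the tensor product onto the degree-$i$ squarefree generators of $N^{(\overline{v},\overline{l})}(i)$, confirming that $\Psi$ is an isomorphism of \emph{graded} $S^{(\overline{v},\overline{l})}$-modules once the twist by $(i)$ is accounted for.
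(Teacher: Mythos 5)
Your proof is correct and takes the same route the paper intends: the paper's entire proof is the one line ``follows immediately from the isomorphism (\ref{monodecomp}),'' and your argument is precisely the bookkeeping that line leaves implicit. Identifying $N^{(\overline{v},\overline{l})}(i)$ via $\phi$ with the product of the block ideals $I_k$, and then checking that the multiplication map from the tensor product is an isomorphism using the unique factorization of monomials across disjoint blocks of variables, is exactly how the cited isomorphism yields the lemma.
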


\begin{proof}
Follows immediately from the isomorphism (\ref{monodecomp}).\\
\end{proof}

\begin{definition}
The \textbf{Hilbert series} associated to the module $N^{(\overline{v},\overline{l})}$ is the formal power series
\[
H_{(\overline{v},\overline{l})}(t) := \sum_{n \geq 0} (\dim_\Q N^{(\overline{v},\overline{l})}_n)t^n.
\]
\text{}\\
\end{definition}

Our next goal will be to express $H_{(\overline{v},\overline{l})}(t)$ as a rational function in $t$. From here standard enumerative combinatorics will allow us to compute the Hilbert polynomial.\\

\begin{proposition}\label{hilbertseries}
The Hilbert series $H_{(\overline{v},\overline{l})}(t)$ can be expressed as the rational function,
\[
H_{(\overline{v},\overline{l})}(t) = \frac{t^i\prod_j (1-(1-t)^{l_j})}{(1-t)^{\mu(\overline{v})}}.
\]
\text{}\\
\end{proposition}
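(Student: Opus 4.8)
The plan is to compute the Hilbert series of each tensor factor appearing in Lemma \ref{tensordecomp} separately, and then multiply, using the fact that the Hilbert series of a tensor product over $\Q$ of graded modules is the product of the individual Hilbert series. By Lemma \ref{tensordecomp}, the module $N^{(\overline{v},\overline{l})}(i)$ decomposes as a tensor product of $i$ ideals of the form $(x_1,\ldots,x_{l_j})$, each living in its own polynomial ring $\Q[x_1,\ldots,x_{l_j}]$, tensored with the free module $\Q[x_{|\overline{l}|+1},\ldots,x_{\mu(\overline{v})}]$. Since tensoring over $\Q$ of graded vector spaces adds degrees, the Hilbert series multiply, so it suffices to identify the Hilbert series of each piece.

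First I would record the two basic building blocks. The free polynomial ring $\Q[y_1,\ldots,y_m]$ has Hilbert series $1/(1-t)^m$, since the number of monomials of degree $n$ is $\binom{n+m-1}{m-1}$. For the maximal ideal $(y_1,\ldots,y_l) \subseteq \Q[y_1,\ldots,y_l]$, I would compute its Hilbert series as the difference between that of the whole ring and that of the trivial subspace $\Q$ spanned by the degree-zero element, giving $\frac{1}{(1-t)^l} - 1 = \frac{1 - (1-t)^l}{(1-t)^l}$. Multiplying over the $i$ factors $j=1,\ldots,i$ and the one free factor in $\mu(\overline{v})-|\overline{l}|$ variables, the denominators combine to $(1-t)^{|\overline{l}|}\cdot(1-t)^{\mu(\overline{v})-|\overline{l}|} = (1-t)^{\mu(\overline{v})}$, while the numerators contribute $\prod_j (1 - (1-t)^{l_j})$.

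The remaining point is to account for the grading shift by $i$. The module $N^{(\overline{v},\overline{l})}$ is generated in degree $2i$, whereas the twist $N^{(\overline{v},\overline{l})}(i)$ appearing in Lemma \ref{tensordecomp} and Proposition \ref{monoideal} is shifted so that its generators (the products of witness-vertex variables, each of which is a degree-one monomial in the $i$ ideal factors) sit in degree $i$. Thus $N^{(\overline{v},\overline{l})}_n = N^{(\overline{v},\overline{l})}(i)_{n-i}$, and passing from the Hilbert series of $N^{(\overline{v},\overline{l})}(i)$ back to that of $N^{(\overline{v},\overline{l})}$ multiplies by $t^i$. Assembling the three contributions yields exactly
\[
H_{(\overline{v},\overline{l})}(t) = \frac{t^i \prod_j \left(1 - (1-t)^{l_j}\right)}{(1-t)^{\mu(\overline{v})}}.
\]

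I do not expect a serious obstacle here; the computation is essentially bookkeeping once Lemma \ref{tensordecomp} is in hand. The one subtlety to be careful about is the degree convention: one must track whether the factored description refers to $N^{(\overline{v},\overline{l})}$ or to its twist $N^{(\overline{v},\overline{l})}(i)$, and insert the compensating factor of $t^i$ correctly, since a sign error or off-by-one in the shift would corrupt the final numerator. A secondary point worth verifying is that the tensor-product-of-Hilbert-series identity genuinely applies, which it does precisely because each tensor factor is taken over the base field $\Q$ rather than over $S^{(\overline{v},\overline{l})}$, so that there is no interaction between the gradings of distinct factors.
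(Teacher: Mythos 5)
Your proposal is correct and follows essentially the same route as the paper: both invoke the tensor decomposition of Lemma \ref{tensordecomp}, the multiplicativity of Hilbert series over tensor products, the computation $\frac{1}{(1-t)^l}-1=\frac{1-(1-t)^l}{(1-t)^l}$ for the augmentation ideal, and the factor $t^i$ compensating for the graded shift from $N^{(\overline{v},\overline{l})}(i)$ back to $N^{(\overline{v},\overline{l})}$. Your explicit bookkeeping of the shift convention $N^{(\overline{v},\overline{l})}_n = N^{(\overline{v},\overline{l})}(i)_{n-i}$ is a slightly more careful rendering of what the paper states in one clause, but the argument is the same.
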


\begin{proof}
We will prove this using the fact that the Hilbert series is multiplicative over tensor products. It is easy to see that the Hilbert series for the augmentation ideal of a rational polynomial ring in $d$ variables is equal to the rational function
\[
\frac{1}{(1-t)^d} - 1 = \frac{1 - (1-t)^d}{(1-t)^d}.
\]
Lemma \ref{tensordecomp} therefore implies
\[
H_{(\overline{v},\overline{l})}(t) = t^i \left(\prod_j \frac{1-(1-t)^{l_j}}{(1-t)^{l_j}}\right)\left(\frac{1}{(1-t)^{\mu(\overline{v}) - |\overline{l}|}}\right) = \frac{t^i\prod_j (1-(1-t)^{l_j})}{(1-t)^{\mu(\overline{v})}},
\]
where the factor $t^i$ arises from the graded shift of $N^{(\overline{v},\overline{l})}$.\\
\end{proof}

This will allow us to prove our result on the obstruction to the Hilbert polynomial of $\M_{i,\dt}$.\\

\begin{corollary}\label{exactbound}
Let $G$ be a tree, and fix $i \geq 0$. Then the dimension of $\M_{i,n} \otimes_\Z \Q$ agrees with a polynomial for all $n \geq 0$.\\
\end{corollary}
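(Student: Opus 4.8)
The plan is to leverage the rational-function form of the Hilbert series established in Proposition \ref{hilbertseries} together with the direct sum decomposition $\M_{i,\dt} \otimes_\Z \Q \cong \bigoplus_{(\overline{v},\overline{l})} N^{(\overline{v},\overline{l})}$ coming from the proof of Theorem \ref{treedeg}. Since the Hilbert function of a finite direct sum is the sum of the individual Hilbert functions, and a finite sum of polynomials is a polynomial, it suffices to prove that each summand $N^{(\overline{v},\overline{l})}$ has a Hilbert function agreeing with a polynomial for \emph{all} $n \geq 0$, not merely for $n \gg 0$. Thus the entire statement reduces to a single rational-function computation for a fixed pair $(\overline{v},\overline{l})$.

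First I would recall the standard fact from enumerative combinatorics (already invoked in the proof of Corollary \ref{eulerpolynomial}) that the coefficients of the power series expansion of a rational function $f(t)/(1-t)^{d}$ agree with a polynomial in $n$ for all $n \geq 0$ precisely when $\deg f < d$. So the whole question becomes: after writing
\[
H_{(\overline{v},\overline{l})}(t) = \frac{t^i\prod_j (1-(1-t)^{l_j})}{(1-t)^{\mu(\overline{v})}},
\]
is the numerator's degree strictly smaller than $\mu(\overline{v})$? The key step is therefore a degree count on the numerator $t^i \prod_j (1-(1-t)^{l_j})$.

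The main obstacle — really the only subtle point — is verifying this degree inequality cleanly. Each factor $1-(1-t)^{l_j}$ is a polynomial of degree exactly $l_j$ with no constant term (it vanishes at $t=0$), so the product $\prod_{j=1}^{i}(1-(1-t)^{l_j})$ has degree $\sum_j l_j = |\overline{l}|$, and multiplying by $t^i$ gives a numerator of degree exactly $i + |\overline{l}|$. I would then compare this to $\mu(\overline{v})$. By definition of the pair $(\overline{v},\overline{l})$, we have the constraint $1 \leq l_k \leq \mu(v_{j_k}) - 2$ for each $k$, so $|\overline{l}| \leq \sum_k (\mu(v_{j_k}) - 2)$. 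The quantity $\mu(\overline{v})$ counts the connected components of $G - \{v_{j_1},\ldots,v_{j_i}\}$, and removing $i$ essential vertices from a tree yields at most $\sum_k \mu(v_{j_k}) - (i-1)$ components, with the relevant bookkeeping giving $\mu(\overline{v}) \geq |\overline{l}| + i + 1$ once one accounts for the two directions at each essential vertex that cannot house witness vertices (reflected in the $-2$) together with the connectivity of the tree. Carrying out this count carefully shows $\deg(\text{numerator}) = i + |\overline{l}| < \mu(\overline{v})$, which is the strict inequality required.

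Having established the strict degree bound, I would conclude that each $H_{(\overline{v},\overline{l})}(t)$ has coefficients agreeing with a polynomial for all $n \geq 0$, hence so does the finite sum $\sum_{(\overline{v},\overline{l})} H_{(\overline{v},\overline{l})}(t)$, which is exactly $\sum_n \dim_\Q(\M_{i,n} \otimes_\Z \Q)\, t^n$. This yields the claim. I expect the bulk of the write-up to be the degree-counting argument for $\mu(\overline{v})$ versus $i + |\overline{l}|$; everything else is a direct appeal to Proposition \ref{hilbertseries} and the elementary rational-function criterion.
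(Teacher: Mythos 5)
Your proposal is correct and follows essentially the same route as the paper: reduce to the summands $N^{(\overline{v},\overline{l})}$, invoke Proposition \ref{hilbertseries} together with the degree criterion from the proof of Corollary \ref{eulerpolynomial}, and verify the strict inequality $i + |\overline{l}| < \mu(\overline{v})$. The paper performs the identical degree count, justifying $l_k \leq \mu(v_{j_k})-2$ by noting that the maximal essential edge at each $v_{j_k}$ and the essential edge containing the root can never house witness vertices, and then computing $i + \sum_k (\mu(v_{j_k})-2) = \mu(\overline{v}) - 1$.
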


\begin{proof}
It suffices to prove the claim for the summands $N^{(\overline{v},\overline{l})}$. Proposition \ref{hilbertseries} tells us that the Hilbert series of $N^{(\overline{v},\overline{l})}$ takes the form
\[
\frac{t^i \prod_{j} (1-(1-t)^{l_j})}{(1-t)^{\mu(\overline{v})}}.
\]
The proof of Corollary \ref{eulerpolynomial} implies that it suffices to show the total degree of the numerator $t^i \prod_{j} (1-(1-t)^{l_j})$ is strictly smaller than $\mu(\overline{v})$. Namely, we must argue
\[
i + |\overline{l}| < \mu(\overline{v})
\]
This follows from the fact that each essential vertex of $\overline{v}$, say $v_{j_k}$, is adjacent to at least one essential edge which can never house a witness vertex. Namely, the essential edge which is maximal among all essential edges adjacent to $v_{j_k}$ with respect to the ordering of the vertices of $G$. Moreover, the essential edge containing the root can never house a witness vertex for any essential vertex.\\

Put another way,
\[
i + |\overline{l}| \leq i + \sum_k( \mu(v_{j_k})-2) = i + \mu(\overline{v}) - i - 1 = \mu(\overline{v}) - 1.
\]
This concludes the proof.\\
\end{proof}

We have now proven enough to explicitly compute the Hilbert polynomial of $\M_{i,\dt}$, and consequently $H_i(B_nG)$. The following theorem follows from the results of this section, as well as the combinatorics of generating functions.\\

\begin{theorem}\label{explicitpoly}
Let $G$ be a tree, fix $i,r \geq 0$, and for any pair $(\overline{v},\overline{l})$ write
\[
a^{(\overline{v},\overline{l})}_{i,r} := \sum_{\substack{d_1 + \ldots + d_i = r \\ d_j \geq 1}} (-1)^{r+i}\binom{l_1}{d_1}\cdots \binom{l_i}{d_i}.
\]
Also let $P^{\overline{v}}_i(n)$ be the polynomial
\[
P^{\overline{v}}_i(n) = \binom{\mu(\overline{v})-1+n-i}{\mu(\overline{v}) - 1}.
\]
Then for all $n \geq 0$, the dimension of $H_i(B_n,\Q)$ is equal to the polynomial
\begin{eqnarray}
\sum_{(\overline{v},\overline{l})}\sum_{r = i}^{|\overline{l}|} a^{(\overline{v},\overline{l})}_{i,r}P^{\overline{v}}_i(n-r).\label{explicitform}
\end{eqnarray}
\text{}\\
\end{theorem}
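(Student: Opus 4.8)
The plan is to extract the Hilbert polynomial directly from the rational expression for the Hilbert series given in Proposition \ref{hilbertseries}, and then assemble the contributions of all summands $N^{(\overline{v},\overline{l})}$. By Theorem \ref{treefree2} we have $\mathcal{H}_i \cong \M_{i,\dt}$, and by the direct sum decomposition used in the proof of Theorem \ref{treedeg}, together with Definition \ref{esspair}, the module $\M_{i,\dt} \otimes_\Z \Q$ splits as $\bigoplus_{(\overline{v},\overline{l})} N^{(\overline{v},\overline{l})}$. Hence $\dim_\Q H_i(B_nG;\Q) = \sum_{(\overline{v},\overline{l})} \dim_\Q N^{(\overline{v},\overline{l})}_n$, and it suffices to compute the Hilbert function of each summand and then sum.

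First I would take the rational function from Proposition \ref{hilbertseries},
\[
H_{(\overline{v},\overline{l})}(t) = \frac{t^i\prod_j (1-(1-t)^{l_j})}{(1-t)^{\mu(\overline{v})}},
\]
and expand the numerator. Writing $1-(1-t)^{l_j} = -\sum_{d_j \geq 1}\binom{l_j}{d_j}(-t)^{d_j}$ and multiplying out over all $j$, the coefficient of $t^r$ in $\prod_j(1-(1-t)^{l_j})$ is exactly $\sum_{d_1+\cdots+d_i = r,\, d_j \geq 1}(-1)^{r+i}\binom{l_1}{d_1}\cdots\binom{l_i}{d_i} = a^{(\overline{v},\overline{l})}_{i,r}$, after accounting for the signs and the leading factor $(-1)^i$ coming from the $i$ factors. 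The summation range $r$ from $i$ to $|\overline{l}|$ reflects that each $d_j \geq 1$ forces $r \geq i$, while $d_j \leq l_j$ forces $r \leq |\overline{l}|$. Then I would combine this with the factor $t^i/(1-t)^{\mu(\overline{v})}$: since $1/(1-t)^{\mu(\overline{v})} = \sum_{m \geq 0}\binom{\mu(\overline{v})-1+m}{\mu(\overline{v})-1}t^m$, multiplying by $t^{i+r}$ shifts the index, so the coefficient of $t^n$ in $H_{(\overline{v},\overline{l})}(t)$ is $\sum_{r=i}^{|\overline{l}|} a^{(\overline{v},\overline{l})}_{i,r}\binom{\mu(\overline{v})-1+n-i-r}{\mu(\overline{v})-1} = \sum_{r} a^{(\overline{v},\overline{l})}_{i,r}P^{\overline{v}}_i(n-r)$.

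Summing this expression over all pairs $(\overline{v},\overline{l})$ yields formula (\ref{explicitform}) as the Hilbert function of $\M_{i,\dt}\otimes_\Z\Q$, which by Theorem \ref{treefree2} equals $\dim_\Q H_i(B_nG;\Q)$. The remaining point is to justify that this coefficient extraction produces a genuine polynomial identity valid for \emph{all} $n \geq 0$, not merely $n \gg 0$. This is precisely what Corollary \ref{exactbound} supplies: since $i + |\overline{l}| < \mu(\overline{v})$, the total degree of the numerator is strictly less than $\mu(\overline{v})$, so the standard fact (used in Corollary \ref{eulerpolynomial}) that the coefficients of $f(t)/(1-t)^l$ agree with a polynomial for all $n$ when $\deg f < l$ applies summand by summand. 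The main obstacle, and the step requiring the most care, is bookkeeping the signs in the binomial expansion so that the stated closed form for $a^{(\overline{v},\overline{l})}_{i,r}$ emerges with the correct sign $(-1)^{r+i}$; once the algebra of the generating function is organized correctly, everything else reduces to the established polynomiality and the additivity of Hilbert functions over the direct sum decomposition.
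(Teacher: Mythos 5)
Your proposal is correct and takes essentially the same approach the paper intends: the paper asserts Theorem \ref{explicitpoly} as a direct consequence of "the results of this section, as well as the combinatorics of generating functions," which is precisely the coefficient extraction from Proposition \ref{hilbertseries} that you carry out, summed over the decomposition into the summands $N^{(\overline{v},\overline{l})}$. Your sign bookkeeping for $a^{(\overline{v},\overline{l})}_{i,r}$ checks out, and your appeal to Corollary \ref{exactbound} (via $i + |\overline{l}| < \mu(\overline{v})$) is exactly the paper's mechanism for upgrading eventual agreement to agreement for all $n \geq 0$.
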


\begin{remark}
One observes from the above formulation that the polynomial $P^{\overline{v}}_i(n)$, as well as the constants $a^{(\overline{v},\overline{l})}_{i,r}$, only depend on the degree sequence of the graph $G$. It follows that the homologies of the braid group of a tree only depends on its degree sequence, as we recorded in Corollary \ref{degreesequence}
\end{remark}

Specializing Theorem \ref{explicitpoly} to the case wherein $i = 1$ yields a much simpler form.\\

\begin{corollary}
Let $G$ be a tree. Then for all $n \geq 0$, the dimension of $H_1(B_nG,\Q)$ agrees with the polynomial
\[
\sum_{v \text{ essential}}\sum_{l=1}^{\mu(v)-2} \binom{\mu(v) - 2 + n}{\mu(v) - 1} - \binom{\mu(v) - 2 + n - l}{\mu(v)-1-l}.
\]
\text{}\\
\end{corollary}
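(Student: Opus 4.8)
The plan is to specialize the general formula of Theorem \ref{explicitpoly} to the case $i=1$ and simplify. When $i=1$, a pair $(\overline{v},\overline{l})$ consists of a single essential vertex $v$ together with a single integer $l$ satisfying $1 \leq l \leq \mu(v)-2$. Here $\mu(\overline{v})$, the number of connected components of $G$ with the single vertex $v$ removed, is simply $\mu(v)$, the degree of $v$. Thus the outer double sum $\sum_{(\overline{v},\overline{l})}$ becomes $\sum_{v \text{ essential}} \sum_{l=1}^{\mu(v)-2}$, matching the structure of the claimed formula.

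Next I would evaluate the coefficients $a^{(\overline{v},\overline{l})}_{1,r}$ and the inner sum $\sum_{r=1}^{|\overline{l}|}$. With $i=1$, we have $|\overline{l}| = l$, and the constraint $d_1 + \cdots + d_i = r$ with each $d_j \geq 1$ collapses to the single equation $d_1 = r$. Therefore
\[
a^{(\overline{v},\overline{l})}_{1,r} = (-1)^{r+1}\binom{l}{r},
\]
and the inner sum runs over $r$ from $1$ to $l$. Substituting $P^{\overline{v}}_1(n-r) = \binom{\mu(v)-1+(n-r)-1}{\mu(v)-1} = \binom{\mu(v)-2+n-r}{\mu(v)-1}$, the expression (\ref{explicitform}) becomes
\[
\sum_{v \text{ essential}} \sum_{l=1}^{\mu(v)-2} \sum_{r=1}^{l} (-1)^{r+1}\binom{l}{r}\binom{\mu(v)-2+n-r}{\mu(v)-1}.
\]

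The main work is to show that the innermost alternating sum over $r$ telescopes to the claimed two-term difference $\binom{\mu(v)-2+n}{\mu(v)-1} - \binom{\mu(v)-2+n-l}{\mu(v)-1-l}$. I would approach this by recognizing the $r=0$ term: since $(-1)^{0+1}\binom{l}{0}\binom{\mu(v)-2+n}{\mu(v)-1} = -\binom{\mu(v)-2+n}{\mu(v)-1}$, adjoining it lets me write the sum over $r$ from $1$ to $l$ as $\binom{\mu(v)-2+n}{\mu(v)-1}$ minus the full alternating sum $\sum_{r=0}^{l}(-1)^r\binom{l}{r}\binom{\mu(v)-2+n-r}{\mu(v)-1}$. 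The remaining task is to identify this complete alternating sum with the single binomial coefficient $\binom{\mu(v)-2+n-l}{\mu(v)-1-l}$. This is a standard finite-difference identity: applying the $l$-th forward difference operator to the polynomial sequence $\binom{\mu(v)-2+n-r}{\mu(v)-1}$ (viewed as a function of $r$) and using the Chu–Vandermonde or Vandermonde-type convolution yields exactly that coefficient.

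The step I expect to be the main obstacle is verifying this binomial identity cleanly, since the index shifts and the alternating signs must be tracked carefully to confirm the top entry reduces from $\mu(v)-1$ to $\mu(v)-1-l$ and the argument shifts by exactly $l$. I would handle this either by a direct generating-function argument—extracting the appropriate coefficient from $(1-t)$-power expansions, which is natural given that Proposition \ref{hilbertseries} already expresses the relevant Hilbert series as $\frac{t \,(1-(1-t)^{l})}{(1-t)^{\mu(v)}}$—or by an explicit induction on $l$ using Pascal's rule. The Hilbert series route is likely cleanest: reading off the coefficient of $t^n$ in $\frac{t\,(1-(1-t)^l)}{(1-t)^{\mu(v)}}$ directly gives $\binom{\mu(v)-2+n}{\mu(v)-1} - \binom{\mu(v)-2+n-l}{\mu(v)-1-l}$, bypassing the alternating sum manipulation entirely and confirming the corollary.
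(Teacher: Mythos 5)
Your proposal is correct and follows essentially the same route as the paper: specialize Theorem \ref{explicitpoly} to $i=1$, compute $a^{(\overline{v},\overline{l})}_{1,r} = (-1)^{r+1}\binom{l}{r}$ (using that $d_1 = r$ is forced), and reduce to the same alternating-sum identity, which the paper dispatches with a one-line appeal to the principle of inclusion-exclusion. Your verification of that identity by generating functions, or equivalently by reading the coefficient of $t^n$ directly off the Hilbert series $\frac{t\,(1-(1-t)^l)}{(1-t)^{\mu(v)}}$ of Proposition \ref{hilbertseries}, is a sound and somewhat more explicit way to finish the same argument.
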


\begin{proof}
In this case we note that for an essential vertex $v$, and a chosen integer $0 \leq l \leq \mu(v)-2$, we have
\[
a^{v,l}_{1,r} = (-1)^{r+1}\binom{l}{r}.
\]
Therefore the polynomial (\ref{explicitform}) can be written
\[
\sum_{v \text{ essential}}\sum_{l=1}^{\mu(v)-2} \sum_{r = 1}^l (-1)^{r+1}\binom{l}{r}\binom{\mu(v)-2+n-r}{\mu(v) - 1}.
\]
Using the principle of inclusion-exclusion, it can be seen that
\[
\sum_{r = 1}^l (-1)^{r+1}\binom{l}{r}\binom{\mu(v)-2+n-r}{\mu(v) - 1} = \binom{\mu(v) - 2 + n}{\mu(v) - 1} - \binom{\mu(v) - 2 + n - l}{\mu(v)-1-l},
\]
as desired.\\
\end{proof}

A result of this kind appears in the work of Farley and Sabalka, where they compute the number of generators of the group $B_nG$ \cite{FS}. A formula for the case $i = 1$ is also found in the results of Ko and Park \cite{KP}.\\

Another simple case is that in which every essential vertex of $G$ has degree 3. In this case Theorem \ref{explicitpoly} can be used to recover the following result of Farley \cite{Fa}.\\

\begin{corollary}
Let $G$ be a tree whose every essential vertex has degree 3. Then for all $n \geq 0$, the dimension of $H_i(B_nG;\Q)$ agrees with the polynomial
\[
\binom{N_G}{i}\binom{n}{2i}
\]
\text{}\\
\end{corollary}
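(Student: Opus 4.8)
The plan is to specialize Theorem \ref{explicitpoly} to the case where every essential vertex has degree 3, and watch the general formula collapse dramatically. Since $\mu(v) = 3$ for every essential vertex, the only admissible value of each $l_k$ is $l_k = \mu(v_{j_k}) - 2 = 1$, so every tuple $\overline{l}$ is forced to be $(1,1,\ldots,1)$ with $|\overline{l}| = i$. This means there is no genuine choice of $\overline{l}$ to sum over, and the summation $\sum_{r=i}^{|\overline{l}|}$ in \eqref{explicitform} degenerates to the single term $r = i$.

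First I would compute the combinatorial coefficient $a^{(\overline{v},\overline{l})}_{i,r}$ at this single surviving value $r = i$. With every $l_k = 1$, the constraint $d_1 + \ldots + d_i = r = i$ with each $d_j \geq 1$ forces $d_j = 1$ for all $j$, giving a unique term, and each $\binom{l_k}{d_k} = \binom{1}{1} = 1$. Hence $a^{(\overline{v},\overline{l})}_{i,i} = (-1)^{i+i} = 1$. Next I would evaluate $\mu(\overline{v})$: removing $i$ degree-3 essential vertices from a tree always yields exactly $1 + 2i$ components (removing a degree-3 vertex from a tree raises the component count by $2$, and these removals are independent in a tree since no two such vertices lie on the interiors of each other's adjacent edges after the subdivision of Section \ref{caseoftrees}), so $\mu(\overline{v}) = 2i + 1$ for every admissible $\overline{v}$. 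Therefore $P^{\overline{v}}_i(n - i) = \binom{2i + n - 2i}{2i} = \binom{n}{2i}$, independent of the particular $\overline{v}$.

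It then remains to count the number of admissible tuples $\overline{v}$. Since each essential vertex has degree exactly $3$, any choice of $i$ distinct essential vertices, listed in the order induced by the labeling, contributes exactly one pair $(\overline{v}, \overline{l})$ with the forced $\overline{l} = (1,\ldots,1)$; there are $\binom{N_G}{i}$ such choices. Summing the constant term $a^{(\overline{v},\overline{l})}_{i,i} \, P^{\overline{v}}_i(n-i) = 1 \cdot \binom{n}{2i}$ over all of them yields
\[
\sum_{(\overline{v},\overline{l})} \binom{n}{2i} = \binom{N_G}{i}\binom{n}{2i},
\]
which is the claimed formula.

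The main obstacle, and the only place requiring genuine care, is verifying the count $\mu(\overline{v}) = 2i + 1$; one must confirm that deleting any set of $i$ degree-3 essential vertices from a tree produces exactly $2i + 1$ components, which amounts to the observation that in a tree the local contributions of independent vertex deletions add without interference. Once this is in hand, every other ingredient of Theorem \ref{explicitpoly} either trivializes or becomes a constant, and the result follows by the straightforward summation above.
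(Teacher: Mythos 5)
Your proposal is correct and follows essentially the same route as the paper: both specialize Theorem \ref{explicitpoly}, observe that $\overline{l}$ is forced to be $(1,\ldots,1)$ so that $a^{(\overline{v},\overline{l})}_{i,r}$ vanishes except at $r=i$ where it equals $1$, use $\mu(\overline{v}) = 2i+1$ to reduce $P^{\overline{v}}_i(n-i)$ to $\binom{n}{2i}$, and count the $\binom{N_G}{i}$ choices of $\overline{v}$. Your write-up merely makes explicit two steps the paper leaves implicit (the evaluation of the coefficient sum and the verification that removing $i$ degree-3 vertices from a tree yields $2i+1$ components), and both are verified correctly.
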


\begin{proof}
The major thing to note in this case is that for any $\overline{v}$, the only associated vector $\overline{l}$ is $(1,\ldots,1)$. Therefore,
\[
a^{(\overline{v},\overline{l})}_{i,r} = \begin{cases} 1 &\text{ if $r = i$}\\ 0 &\text{ otherwise.}\end{cases}
\]
The formula (\ref{explicitform}) now becomes
\[
\binom{N_G}{i}P^{\overline{v}}_i(n-i) = \binom{N_G}{i}\binom{\mu(\overline{v})-1+n-2i}{\mu(\overline{v}) - 1} = \binom{N_G}{i}\binom{2i+1-1+n-2i}{2i} = \binom{N_G}{i}\binom{n}{2i}.
\]
This completes the proof.\\
\end{proof}

\section{Concluding remarks}

In this section we consider what this work implies for the homology of graph braid groups when $G$ is a general graph rather than a tree.\\

Let $G$ be a graph with $N_G$ essential vertices, and $E$ essential edges. Also assume that $G$ is homeomorphic to neither $S^1$ nor a line segment. Then the work of Section \ref{morsecomplexismodule} implies that $\M_{i,\dt}$ carries the structure of a finitely generated $S_G$-module. If $G$ is not a tree, however, it is no longer the case that the Morse differential is trivial. We therefore pose the following question.

\begin{question}
Does the action of $S_G$ on $\M_{i,\dt}$ commute with the Morse differential? In other words, is the Morse differential a morphism of graded $S_G$-modules?
\end{question}

We note that altering our choice of spanning tree $T$ has a non-trivial influence on the Morse differential. It is therefore unclear whether the Morse differential will commute with the action of $S_G$ for all choices of $T$. We therefore refine Question 1 as follows.

\begin{question}
Does there exist a choice of spanning tree $T$ of $G$ such that the action of $S_G$ on $\M_{i,\dt}$ commutes with the Morse differential?
\end{question}

If the answer to these questions is affirmative, then we can immediately conclude many things about the asymptotic behavior of these homology groups. Indeed, in this case the Hilbert basis theorem would imply that there is a finitely generated $S_G$-module $\mathcal{H}_i$ such that $(\mathcal{H}_i)_n \cong H_i(B_nG)$.\\

\begin{corollary}\label{maincor}
Assume that Questions 1 and 2 have an affirmative answer. For each $i,n$ write
\[
H_i(B_nG) \cong \Z^{b_i(n)} \oplus \bigoplus_{p \text{ prime}}\bigoplus_{j \geq 1} (\Z/p^j\Z)^{m_{p,i,j}(n)}.
\]
Then:
\begin{enumerate}
\item there exists a polynomial $P_i(t) \in \Q[t]$ of degree $\leq E-1$ such that $P_i(n) = b_i(n)$ for all $n \gg 0$;
\item there exists a positive integer $e_i$, independent of $n$, such that the exponent of $H_i(B_nG)$ is at most $e_i$;
\item for each prime $p$ and each positive integer $j$, there is a polynomial $P_{i,p^j}(t) \in \Q[t]$ such that $P_{i,p^j}(n) = m_{p,i,j}(n)$ for $n \gg 0$. In particular, there is a finite set of primes $p_{i,1},\ldots,p_{i,l}$, and polynomials $m_{i,p_j}(n)$ such that the order of the torsion subgroup of $H_i(B_nG)$ is precisely $p_1^{m_{p_{i,1}}(n)}\cdots p_l^{m_{p_{i,l}}(n)}$.\\
\end{enumerate}
\end{corollary}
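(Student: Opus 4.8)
The plan is to reduce every assertion to the structure theory of finitely generated graded modules over a polynomial ring, exploiting the fact that although $S_G = \Z[x_{\mathfrak{e}_1},\ldots,x_{\mathfrak{e}_E}]$ lives over $\Z$ rather than a field, one can either tensor with $\Q$ or reduce modulo a prime to land in a situation where Hilbert polynomials are available. Under the standing hypothesis, the Morse differential is a morphism of finitely generated graded $S_G$-modules, so—as noted just before the statement—the Hilbert basis theorem supplies a finitely generated graded $S_G$-module $\mathcal{H}_i$ with $(\mathcal{H}_i)_n \cong H_i(B_nG)$, where each variable $x_{\mathfrak{e}}$ acts in degree $1$. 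I take this $\mathcal{H}_i$ as the starting point; the remaining work is purely algebraic.

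For part (1) I would tensor with $\Q$. Since this operation is exact on free parts and annihilates torsion, $(\mathcal{H}_i \otimes_\Z \Q)_n$ has $\Q$-dimension exactly $b_i(n)$. As $\mathcal{H}_i \otimes_\Z \Q$ is a finitely generated graded module over $\Q[x_{\mathfrak{e}_1},\ldots,x_{\mathfrak{e}_E}]$ with variables in degree $1$, its Hilbert function agrees for $n \gg 0$ with a polynomial of degree one less than its Krull dimension, which is at most $E$; this produces $P_i$ of degree $\leq E-1$. For part (2) I would observe that the $\Z$-torsion submodule $T \subseteq \mathcal{H}_i$ is a graded $S_G$-submodule: it is graded because the homogeneous components of a torsion element are themselves torsion, and it is closed under the $S_G$-action because $x_{\mathfrak{e}}$ commutes with multiplication by integers. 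Noetherianity of $S_G$ forces $T$ to be finitely generated; choosing finitely many homogeneous generators $t_1,\ldots,t_s$ with $\Z$-orders $e_1,\ldots,e_s$ and setting $e_i := \mathrm{lcm}(e_1,\ldots,e_s)$, the identity $e_i\cdot\big(\sum_\alpha f_\alpha t_\alpha\big) = \sum_\alpha f_\alpha (e_i/e_\alpha)(e_\alpha t_\alpha) = 0$ shows $e_i$ annihilates all of $T$, so the exponent of $(\mathcal{H}_i)_n = H_i(B_nG)$ divides $e_i$ for every $n$.

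Part (3) is where the module structure is used most essentially, through the filtration of $\mathcal{H}_i$ by powers of a fixed prime $p$. For each $k \geq 0$ the quotient $p^k\mathcal{H}_i / p^{k+1}\mathcal{H}_i$ is a finitely generated graded $S_G$-module annihilated by $p$, hence a finitely generated graded module over $\mathbb{F}_p[x_{\mathfrak{e}_1},\ldots,x_{\mathfrak{e}_E}]$; its Hilbert function $d_k(n) := \dim_{\mathbb{F}_p}\!\big(p^k(\mathcal{H}_i)_n / p^{k+1}(\mathcal{H}_i)_n\big)$ is therefore polynomial for $n \gg 0$. Decomposing the finitely generated abelian group $(\mathcal{H}_i)_n$ into cyclic summands and tracking how multiplication by $p^k$ acts gives $d_k(n) = b_i(n) + \sum_{j > k} m_{p,i,j}(n)$, whence $m_{p,i,j}(n) = d_{j-1}(n) - d_j(n)$ is polynomial for $n \gg 0$. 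By part (2) only the finitely many primes $p_{i,1},\ldots,p_{i,l}$ dividing $e_i$ can occur, and for each such $p$ the exponent $j$ is bounded by $\log_p(e_i)$; summing $j\cdot m_{p,i,j}(n)$ over these finitely many $j$ yields the polynomials governing the order of the torsion subgroup.

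I expect the genuine content to be concentrated in part (3): recognizing the refined torsion multiplicities $m_{p,i,j}(n)$ as differences of Hilbert functions of the associated graded pieces $p^k\mathcal{H}_i/p^{k+1}\mathcal{H}_i$ over $\mathbb{F}_p[x_{\mathfrak{e}_1},\ldots,x_{\mathfrak{e}_E}]$ is the step that actually exploits the $S_G$-module structure, whereas (1) and (2) are comparatively formal once $\mathcal{H}_i$ is in hand. The only delicate bookkeeping is the identity relating $d_k(n)$ to the $m_{p,i,j}(n)$, which I would verify directly from the primary decomposition of $(\mathcal{H}_i)_n$.
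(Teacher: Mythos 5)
Your proposal is correct, and it is essentially the paper's proof: the paper disposes of this corollary with the single line ``Follows from standard facts in the study of graded modules over integral polynomial rings,'' and your argument---rank via $\mathcal{H}_i \otimes_\Z \Q$ and Hilbert--Serre, bounded exponent via Noetherianity of the graded torsion submodule, and torsion multiplicities via the eventually-polynomial Hilbert functions of the $\mathbb{F}_p[x_{\mathfrak{e}_1},\ldots,x_{\mathfrak{e}_E}]$-modules $p^{j-1}\mathcal{H}_i/p^{j}\mathcal{H}_i$---is exactly the standard structure theory being invoked, with the bookkeeping identity $m_{p,i,j}(n)=d_{j-1}(n)-d_j(n)$ verified correctly.
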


\begin{proof}
Follows from standard facts in the study of graded modules over integral polynomial rings.\\
\end{proof}

The work of Ko and Park \cite[Theorem 3.6]{KP} that $H_1(B_nG)$ has a torsion subgroup annihilated by 2, and that the multiplicity of this torsion is eventually constant in $n$. Their work also implies that the rank of $H_1(B_nG)$ will be constant whenever $G$ is sufficiently connected. In contrast to this, affirmative answers to Questions 1 and 2 imply that the top homology of $B_nG$ always grows as fast as possible.\\

\begin{theorem}\label{tophomology}
Assume that Questions 1 and 2 have an affirmative answer. Then the Hilbert function
\[
n \mapsto \dim_\Q(H_{N_G}(B_nG;\Q))
\]
is equal to a polynomial of degree $E-1$ for $n \gg 0$.
\end{theorem}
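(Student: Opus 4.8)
The plan is to reduce the claim about the top homology $H_{N_G}(B_nG;\Q)$ to the structure of the module $\M_{N_G,\dt}$ and its Hilbert polynomial, exploiting that $i = N_G$ is the extremal case. Assuming affirmative answers to Questions 1 and 2, the Morse differential commutes with the $S_G$-action, so $\mathcal{H}_{N_G}$ is a finitely generated graded $S_G$-module with $(\mathcal{H}_{N_G})_n \cong H_{N_G}(B_nG)$; hence $\dim_\Q(H_{N_G}(B_nG;\Q))$ agrees with a polynomial for $n \gg 0$ by the existence of the Hilbert polynomial. It remains only to pin down the degree of this polynomial as exactly $E - 1$.

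First I would observe that $N_G$ is the top nonvanishing homological degree (Theorem \ref{dimconf}), so there is no differential \emph{out} of $\M_{N_G,\dt}$ into a higher degree. This means that, unlike the case $i < N_G$, the quotient $\mathcal{H}_{N_G}$ need not shed the maximal-dimensional summands of $\M_{N_G,\dt}$: the top Morse chains survive to homology up to the image of $\widetilde{\partial}\colon \M_{N_G+1,\dt} \to \M_{N_G,\dt}$, and $\M_{N_G+1,\dt} = 0$ again by Theorem \ref{dimconf}. Therefore $\mathcal{H}_{N_G} \cong \coker(\widetilde{\partial}\colon \M_{N_G,\dt} \to \M_{N_G-1,\dt})^{\perp}$ collapses to $\M_{N_G,\dt}$ itself, i.e. $\mathcal{H}_{N_G} \cong \M_{N_G,\dt}$ as $S_G$-modules. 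So the degree computation for $H_{N_G}$ is exactly the degree computation for $\M_{N_G,\dt}$.

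Next I would show that $\M_{N_G,\dt}$ has a summand whose Hilbert polynomial attains degree $E - 1$. Using the decomposition of $\M_{i,\dt}$ into summands indexed by the edges of the critical cells (as in the proof of Theorem \ref{treedeg} and Remark \ref{slowgrowth}), a critical $N_G$-cell uses an edge at every one of the $N_G$ essential vertices. I would select one such configuration of edges so that removing all their closures from $G$ breaks it into the maximal number of components that can house witness vertices; by the definition of $\Delta_G^{N_G} = E$ (Definition \ref{deltaig}), removing all $N_G$ essential vertices partitions $G$ into exactly $E$ essential edges, so one can arrange the distinct variables $x_{\mathfrak{e}}$ to act along $E$ independent components. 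The corresponding summand $S_G \cdot c$ then has graded pieces of rank $\binom{n - 2N_G + E - 1}{E - 1}$, a polynomial of degree $E - 1$, exactly as in the tree computation. Combined with the general upper bound of $E - 1$ coming from $S_G$ having $E$ variables (Remark \ref{slowgrowth}), this forces the degree to be precisely $E - 1$.

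The main obstacle is the step identifying $\mathcal{H}_{N_G}$ with $\M_{N_G,\dt}$: one must be careful that the hypothesized commutation of $\widetilde{\partial}$ with the $S_G$-action is genuinely enough to conclude that the top homology module is finitely generated \emph{and} that it contains a maximal-dimension summand, rather than merely that it is a subquotient that might drop in dimension. The vanishing $\M_{N_G+1,\dt}=0$ handles the ``no outgoing differential'' direction cleanly, but one should verify that no incoming differential from $\M_{N_G,\dt}$ itself (there is none, as $\widetilde{\partial}$ lowers degree) nor any subtle torsion interaction over $\Z$ could kill the top-dimensional summand after tensoring with $\Q$. Since we work rationally and the relevant summand is a free $S_G \otimes_\Z \Q$-submodule of $\M_{N_G,\dt}\otimes_\Z\Q$ generated by a single critical cell, its image in homology is nonzero and its Hilbert polynomial is unchanged, which secures the lower bound on the degree.
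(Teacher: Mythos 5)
Your reduction contains a genuine error at its central step: the identification $\mathcal{H}_{N_G} \cong \M_{N_G,\dt}$. The Morse complex is a chain complex whose differential \emph{lowers} degree, so the map out of the top chain module is $\widetilde{\partial}\colon \M_{N_G,\dt} \to \M_{N_G-1,\dt}$, and the top homology is its kernel,
\[
\mathcal{H}_{N_G} \cong \ker\bigl(\widetilde{\partial}\colon \M_{N_G,\dt} \to \M_{N_G-1,\dt}\bigr),
\]
not the full module (the quotient by the image from $\M_{N_G+1,\dt}=0$ is indeed trivial, but that was never the issue). For a general graph the Farley--Sabalka Morse differential is not trivial --- this is exactly why the paper restricts Theorems \ref{homologyismodule} and \ref{sqfr} to trees --- so this kernel can a priori be a proper submodule. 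Your closing paragraph dismisses the worry on the grounds that there is no incoming differential from $\M_{N_G,\dt}$ ``as $\widetilde{\partial}$ lowers degree,'' but this gets the direction backwards: precisely because $\widetilde{\partial}$ lowers degree, it is the \emph{outgoing} map from $\M_{N_G,\dt}$ that decides which top chains are cycles. In particular the distinguished summand $S_G \cdot c$ you construct need not consist of cycles at all; if $\widetilde{\partial}(c) \neq 0$ then $c$ has no ``image in homology,'' and the intersection of $S_G \cdot c$ with the kernel could have strictly smaller Hilbert-polynomial degree. So your lower bound on the degree is not established.

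The gap is repairable, but the repair requires input beyond the structure of $\M_{N_G,\dt}$ alone, and the paper's actual proof supplies it from a different direction entirely: it never analyzes the kernel. Instead it uses Gal's computation (Corollary \ref{eulerpolynomial}) that $\chi(\UConf_n(G))$ agrees with a polynomial of degree $E-1$, together with Remark \ref{slowgrowth}, which bounds $\dim_\Q H_i(B_nG;\Q)$ for $i < N_G$ by the number of critical $i$-cells, a quantity growing with degree strictly less than $E-1$. Since the alternating sum of all Betti numbers has degree $E-1$ and every term except the top one grows strictly slower, the top Betti number must grow with degree exactly $E-1$, and it is eventually polynomial by the finite generation granted by Questions 1 and 2. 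Alternatively, you could patch your own argument with a rank--nullity estimate localized in the top two degrees: $\dim_\Q(\mathcal{H}_{N_G})_n \geq \dim_\Q(\M_{N_G,n}\otimes_\Z\Q) - \dim_\Q(\M_{N_G-1,n}\otimes_\Z\Q)$, so your summand construction (giving degree-$(E-1)$ growth of $\M_{N_G,n}$) combined with Remark \ref{slowgrowth} (strictly slower growth of $\M_{N_G-1,n}$) yields the lower bound without ever identifying the kernel. Either way, the smallness of the adjacent chain module (or of all lower homology, via the Euler characteristic) is the indispensable ingredient your proposal is missing.
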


\begin{proof}
Theorem \ref{eulerpolynomial} implies that the Euler characteristic of $\UConf_n(G)$ is a polynomial of degree $E-1$. On the other hand, Remark \ref{slowgrowth} implies the lower homology groups have Hilbert functions which grow strictly slower than this. It follows that the rank of $H_{N_G}(B_nG)$ must eventually grow like a polynomial of degree $E-1$.\\
\end{proof}

Note that Theorem \ref{morsediffcomp} implies that $H_{N_G}(B_nG)$ is torsion-free. We have therefore seen that $H_1(B_nG)$ can only have 2 torsion, while the top homology $H_{N_G}(B_nG)$ does not have any torsion. It is an interesting question to ask what other types of torsion can appear in the intermediate homologies.\\

While an affirmative answer to Questions 1 and 2 would be a step in the right direction, the work of Ko and Park \cite{KP} would suggest that the action of $S_G$ on $\M_{i,\dt}$ described in Section \ref{morsecomplexismodule} is perhaps non-optimal. We have already discussed that the result \cite[Theorem 3.6]{KP} implies that the braid groups of biconnected graphs have eventually constant first homologies. However, the action of $S_G$ defined in Section \ref{morsecomplexismodule} only detects the connectivity a chosen spanning tree of $G$. In particular, is it possible that one can improve on this action by accounting for connectivity granted by the deleted edges?

\begin{question}
Let $G$ be a graph which is neither a circle, nor a line segment. Does there exist an action of $S_G$ on $\M_{i,\dt}$ which interacts in a non-trivial way with the deleted edges of $G$? More specifically, can one define this action so that if $c$ is a critical $i$-cell, then there is a $\Z$-linear relation between $x_{\mathfrak{e}_1}\cdot c$ and $x_{\mathfrak{e}_2} \cdot c$ whenever the two essential edges $\mathfrak{e}_1$ and $\mathfrak{e}_2$ are in the same connected component of $G$ with the edges of $c$ removed?
\end{question}

The existence of such an action would actually suggest something quite strong about what kind of invariants of $G$ are encoded by the homology of its braid groups. If $G$ is a graph, recall that we define $\Delta_G^i$ to be the maximum number of connected components $G$ can be broken into by removing exactly $i$-vertices.\\

\begin{conjecture}\label{mainconj}
Let $G$ be a graph which is neither a line segment, nor a circle. Then for all $i$, and all $n \gg i$, the Betti number
\[
b_i(n) := \dim_\Q(H_i(B_nG;\Q))
\]
agrees with a polynomial of degree $\leq \Delta^i_G-1$.\\
\end{conjecture}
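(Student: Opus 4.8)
The plan is to deduce the conjecture from the module-theoretic machinery of Section \ref{morsecomplexismodule}, in exactly the manner of the tree case (Theorem \ref{treedeg}), but with two upgrades. First, one needs the homology itself, rather than the chain group $\M_{i,\dt}$, to carry a finitely generated graded $S_G$-module structure; second, one needs that structure to ``see'' the connectivity supplied by the deleted edges of $G$, not merely that of the spanning tree $T$. Concretely, I would condition on affirmative answers to Questions 1 and 2, so that, as in Corollary \ref{maincor}, there is a finitely generated graded $S_G\otimes\Q$-module $\mathcal{H}_i\otimes\Q$ with $(\mathcal{H}_i\otimes\Q)_n\cong H_i(B_nG;\Q)$. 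Eventual polynomiality of $b_i(n)$ is then immediate from the existence of the Hilbert polynomial, and the degree of that polynomial is exactly $\dim(\mathcal{H}_i\otimes\Q)-1$, the Krull dimension of the module minus one. (For $i>N_G$ the homology vanishes by Ghrist's bound, so the statement is vacuous there.) The whole problem thus reduces to the dimension estimate $\dim(\mathcal{H}_i\otimes\Q)\le \Delta^i_G$.

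To prove this estimate, observe first that $\mathcal{H}_i\otimes\Q$ is a subquotient of $\M_{i,\dt}\otimes\Q$ under the now $S_G$-linear Morse differential $\widetilde{\partial}$, so $\dim(\mathcal{H}_i\otimes\Q)\le\dim(\M_{i,\dt}\otimes\Q)$ and it suffices to bound the chain module itself. Here I would pass to the connectivity-aware action posited in Question 3, in which $x_{\mathfrak{e}_1}$ and $x_{\mathfrak{e}_2}$ become linearly related whenever $\mathfrak{e}_1,\mathfrak{e}_2$ lie in the same connected component of $G$ with the edges of the underlying critical cell removed. Since the $S_G$-action fixes the edge set of a critical cell, $\M_{i,\dt}$ decomposes as a direct sum of graded $S_G$-submodules indexed by the edge set $\{e_1,\dots,e_i\}$ of the cell, and its dimension is the maximum of the dimensions of the summands. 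On the summand attached to a fixed edge set, the relations of Question 3 collapse $S_G\otimes\Q$ onto a polynomial ring in $d$ variables, where $d$ is the number of connected components of $G\setminus\{e_1,\dots,e_i\}$ that contain an essential edge, whence the dimension of the summand is at most $d$. This is precisely the improvement sought: the old action of Section \ref{morsecomplexismodule} counts components of $T\setminus\{e_1,\dots,e_i\}$ and only yields the crude bound $E-1$ of Remark \ref{slowgrowth}, whereas replacing $T$ by $G$ can only merge components.

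It remains to bound $d$ by $\Delta^i_G$. By Lemma \ref{critlemma}, together with the standing subdivision and the choice (\ref{treecondition}) of $T$, each $e_k$ meets exactly one essential vertex of $G$; let $W$ be the resulting set of essential vertices, so $|W|\le i$. Deleting the $i$ edges $e_1,\dots,e_i$ is less disconnecting than deleting all of $W$: two essential edges joined by a path that avoids $e_1,\dots,e_i$ may well be separated once the vertices of $W$ (and all their incident edges) are removed, so the partition of the essential edges induced by $G\setminus\{e_1,\dots,e_i\}$ is coarser than the one induced by $G\setminus W$. Hence $d$ is at most the number of components of $G\setminus W$, which is at most $\Delta^{|W|}_G\le\Delta^i_G$ by the monotonicity of $\Delta^i_G$ in $i$ (for $i\le N_G$) and the definition of $\Delta^i_G$. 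Combining the three paragraphs gives $\deg b_i\le\Delta^i_G-1$; the summands of low dimension are exactly what force biconnected graphs, where $\Delta^1_G=1$, to have eventually constant $H_1$, consistent with \cite{KP}.

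The genuine obstacle is the passage to the connectivity-aware action. The argument above is essentially a dimension count that runs verbatim once one has a finitely generated graded $S_G$-module computing $H_i(B_nG;\Q)$ on which deleted-edge connectivity is visible; producing such an action is precisely the content of Questions 1--3, all of which remain open. The difficulty is that for a non-tree $G$ the Morse differential $\widetilde{\partial}$ is no longer zero and can be combinatorially intricate (see \cite{KP}), so verifying that $\widetilde{\partial}$ is $S_G$-linear, and, more delicately, that it respects the finer relations coming from the deleted edges, is where the real work lies. A reasonable line of attack is to analyze how $\widetilde{\partial}$ interacts with the particle-flow description of multiplication by $x_{\mathfrak{e}}$, seeking a choice of spanning tree $T$ for which introducing a drifting vertex along $\mathfrak{e}$ commutes with the reduction map $R^\infty$ defining $\widetilde{\partial}$.
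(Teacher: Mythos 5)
This statement is Conjecture \ref{mainconj}, and the paper deliberately offers no proof of it: the entire concluding section exists to explain that the argument you propose cannot currently be carried out. Your write-up is explicitly conditional on affirmative answers to Questions 1, 2, and 3, so it is not a proof but a reconstruction of the heuristic the paper itself uses to motivate the conjecture (compare the discussion surrounding Question 3 and Theorem \ref{tophomology}, which the paper likewise proves only conditionally). Beyond this, there is a gap even \emph{granting} the three questions separately: your second paragraph needs a single $S_G$-action on $\M_{i,\dt}$ that simultaneously (i) commutes with the Morse differential $\widetilde{\partial}$, so that $\mathcal{H}_i\otimes\Q$ is a subquotient module, and (ii) satisfies the deleted-edge relations of Question 3. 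But Questions 1 and 2 concern the specific action of Section \ref{morsecomplexismodule}, while Question 3 explicitly contemplates \emph{replacing} that action; the paper nowhere asserts, even conjecturally, that both desiderata can be met by one action, and you silently conflate them when you ``pass to the connectivity-aware action'' while continuing to use $S_G$-linearity of $\widetilde{\partial}$.

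Your dimension count, which is the genuinely new content of the write-up relative to the paper's discussion, is mostly sound but has two soft spots. First, Question 3 only posits a $\Z$-linear relation between $x_{\mathfrak{e}_1}\cdot c$ and $x_{\mathfrak{e}_2}\cdot c$; your claim that the summand attached to a fixed edge set becomes a module over a polynomial ring in $d$ variables is stronger, and needs the relations to be scalar identifications compatible across all degrees (in the tree case, Definition \ref{esspair} gets this by literal equality of operators, which is what you must mimic). Second, the final step $\Delta^{|W|}_G\le\Delta^i_G$ when $|W|<i$ (i.e.\ when two edges of the critical cell share an essential vertex) invokes monotonicity of $\Delta^\bullet_G$, which is clear for trees but requires an argument for general graphs: removing a further essential vertex cannot merge components, but it can delete a component outright if that vertex has become isolated, so one should verify $\Delta^{j}_G\le\Delta^{j+1}_G$ for $j<N_G$ or pad $W$ with care. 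The path-avoidance comparison itself (same component of $G-W$ implies same component of $G$ minus the cell's edges, since each $e_k$ is incident to $W$) is correct, and your argument would indeed yield $\deg b_i\le\Delta^i_G-1$ if the module-theoretic inputs existed; but as you yourself concede in your final paragraph, producing them is precisely the open problem, so the conjecture remains unproven by this route.
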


Note that Theorem \ref{tophomology} is a specific case of this conjecture. Also note, if $G$ is biconnected, then the above implies that $b_1(n)$ is eventually constant. This fact is proven in the work of Ko and Park \cite[Theorem 3.6]{KP}. The above conjecture therefore proposes a generalization of one aspect of that work.\\

Finally, we think about the ramifications of this work on the spaces $\UConf_n(G)$. The action of $S_G$ on $\M_{i,\dt}$ is defined in a purely formal way. However, in the study of more classical configuration spaces, stability phenomena often arise from maps on the underlying spaces $\UConf_n(G) \rightarrow \UConf_{n+1}(G)$ (see, for instance, \cite{EW-G} and \cite{McD}). Is that also the case here?\\

\begin{question}
If $G$ is a tree, is the action of $S_G$ on $\M_{i,\dt}$ induced by a map of topological spaces $\UConf_n(G) \rightarrow \UConf_{n+1}(G)$? If $G$ is a general graph, and Questions 1 and 2 have affirmative answers, is the action of $S_G$ on the homologies of $B_nG$ induced in a similar way?\\
\end{question}

Of course, Question 4 is highly relevant for Questions 1 and 2. If one understands the action of $S_G$ topologically, then it might make the existence of such an action on the homologies of general graph braid groups more apparent.\\


\begin{thebibliography}{aaaa}
\small
\bibitem[A]{A} A. Abrams, \textit{Configuration spaces and braid groups of graphs}, Ph.D thesis, \url{home.wlu.edu/~abramsa/publications/thesis.ps}.
\bibitem[BF]{BF} K. Barnett and M. Farber, \textit{Topology of configuration space of two particles on a graph, I}, Algebr.
Geom. Topol. 9(1) (2009), 593–624. \arXiv{0903.2180}.
\bibitem[CEF]{CEF} T. Church, J. S. Ellenberg and B. Farb, \textit{$\FI$-modules and stability for representations of symmetric groups}, Duke Math. J. 164, no. 9 (2015), 1833-1910.
\bibitem[EW-G]{EW-G} J. S. Ellenberg and J. D. Wiltshire-Gordon, \textit{Algebraic structures on cohomology of configuration spaces of manifolds with flows}, \arXiv{1508.02430}.
\bibitem[Fa]{Fa} D. Farley, \textit{Homology of tree braid groups}, Topological and asymptotic aspects of group theory, 101-112, Contemp. Math., 394, Amer. Math. Soc., Providence, RI, 2006. \url{http://www.users.miamioh.edu/farleyds/grghom.pdf}.
\bibitem[Fo]{Fo} R. Forman, \textit{Morse theory for cell complexes}, Adv. in Math. 134 (1998), pp. 90-145.
\bibitem[Fo2]{Fo2} R. Forman, \textit{A user's guide to discrete Morse theory}, S\'eminaire Lotharingien de Combinatoire 48 (2002), 35 p. \url{http://www.emis.de/journals/SLC/wpapers/s48forman.pdf}.
\bibitem[FS]{FS} D. Farley and L. Sabalka, \textit{Discrete Morse theory and graph braid groups}, Algebr. Geom. Topol. 5 (2005), 1075-1109 (electronic). \url{http://www.users.miamioh.edu/farleyds/FS1.pdf}.
\bibitem[FS2]{FS2} D. Farley and L. Sabalka, \textit{Presentations of graph braid groups}, Forum Math. 24 (2012), no. 4, 827-859. \url{http://www.users.miamioh.edu/farleyds/FS3final.pdf}.
\bibitem[Gh]{Gh} R. Ghrist, \textit{Configuration spaces and braid groups on graphs in robotics}, Knots, braids, and mapping class groups - papers dedicated to Joan S. Birman (New York, 1998), AMS/IP Stud. Adv. Math., 24, Amer. Math. Soc., Providence, RI (2001), 29–40. \url{https://www.math.upenn.edu/~ghrist/preprints/birman.pdf}.
\bibitem[Ga]{Ga} S. R. Gal, \textit{Euler characteristic of the configuration space of a complex}, Colloq. Math. 89 (2001), 61-67. \url{http://arxiv.org/pdf/math/0202143v1.pdf}.
\bibitem[KKP]{KKP} J. H. Kim, K. H. Ko, and H. W. Park, \textit{Graph braid groups and right-angled Artin groups}, Trans. Amer. Math. Soc. 364 (2012), 309-360. \arXiv{0805.0082}.
\bibitem[KP]{KP} K. H. Ko, and H. W. Park, \textit{Characteristics of graph braid groups}, Discrete Comput Geom (2012) 48: 915. \arXiv{1101.2648}.
\bibitem[L]{L} D. L\"utgehetmann, \textit{Configuration spaaces of graphs}, Masters Thesis, \url{http://luetge.userpage.fu-berlin.de/pdfs/masters-thesis-luetgehetmann.pdf}.
\bibitem[McD]{McD} D. McDuff, \textit{Configuration spaces of positive and negative particles}, Topology, Volume 14, Issue 1, March 1975, Pages 91-107.
\bibitem[MS]{MS} E. Miller and B. Sturmfels, \textit{Combinatorial Commutative Algebra}, Graduate Texts in Mathematics \# 227, Springer-Verlag New York, 2005.
\bibitem[PS]{PS} P. Prue and T. Scrimshaw, \textit{Abrams’s stable equivalence for graph braid groups}, \arXiv{0909.5511}.
\bibitem[SS]{SS} S. Sam, and A. Snowden, \textit{Proof of Stembridge's conjecture on stability of Kronecker coefficients}, J. Algebraic Combin. 43 (2016), no. 1, 1-10, \arXiv{1501.00333}.
\end{thebibliography}
\end{document}